\newcounter{corr}
\definecolor{violet}{rgb}{0.580,0.,0.827}
\newcommand{\corr}[3]{\typeout{Warning : a correction remains in page
		\thepage}
	\stepcounter{corr}        
	{\color{blue}\ifmmode\text{\,\sout{\ensuremath{#1}}\,}\else\sout{#1}\fi}
	{\color{red}#2}
	{\color{violet} #3}}
\newtheorem{theorem}{Theorem}[section]
\newtheorem{proposition}[theorem]{Proposition}
\newtheorem{corollary}[theorem]{Corollary}
\newtheorem{lemma}[theorem]{Lemma}
\newtheorem{remark}[theorem]{Remark}
\newtheorem{definition}[theorem]{Definition}
\newcommand{\R}{\mathbb R}
\newcommand{\N}{\mathbb N}
\def\I{\mathcal I}
\def\H{\mathcal H}
\def\eps{\varepsilon}
\def\lt{\left}
\def\rt{\right}
\def\S{\mathcal{S}}
\def\les{\lesssim}
\def\ges{\gtrsim}
\def\Ia{\I_\alpha}
\def\Iae{\I_{\alpha,\eps}}
\def\wE{\widetilde{E}}
\def\wmu{\widetilde{\mu}}
\def\F{\mathcal F}
\def\FaQ{\F_{\alpha,Q}}
\def\FuQ{\F_{1,Q}}
\def\FaQL{\F_{\alpha,Q,\Lambda}}
\def\FaQeL{\F_{\alpha,Q,\Lambda,\eps}}
\def\wF{\widetilde{F}}
\def\one{\boldsymbol{1}}
\def\sIa{I_\alpha}
\def\ha{\widehat{\alpha}}
\author{Michael Goldman}
\address{Universit\'e de Paris and Sorbonne Universit\'e, CNRS, LJLL, F-75005 Paris, France}
\email{michael.goldman@u-paris.fr}
\author{Matteo Novaga}
\address{Department of Mathematics, University of Pisa, 56127 Pisa, Italy} 
\email{matteo.novaga@unipi.it} 
\author{Berardo Ruffini}
\address{Department of Mathematics, University of Bologna, 40126 Bologna, Italy} 
\email{berardo.ruffini@unibo.it}
\numberwithin{equation}{section}
\title[Isoperimetric problem with strong capacitary repulsion]{Rigidity of the ball for an isoperimetric problem with strong capacitary repulsion}
\begin{document}

\begin{abstract}
We consider a variational problem involving  competition between surface tension and charge repulsion. We show that, as opposed to the case of weak (short-range)
interactions where we proved ill-posedness of the problem in a previous paper, 
when the repulsion is stronger  the perimeter dominates the capacitary term at small scales. In particular we prove existence of minimizers for small charges as well as their regularity.
Combining this with the stability of the ball under small $C^{1,\gamma}$ perturbations, this  ultimately leads to the minimality of the ball for small charges.
We cover in particular the borderline case of the $1-$capacity where both terms in the energy are of the same order.
 
%We consider the minimization under volume constraint of the perimeter plus the inverse of strongly singular Riesz-capacities, that is with a very repulsive near the origin kernel. In the small charge regime, that is when the capacitary term is multiplied by a small constant, we show existence of minimizers, their $C^{1,\alpha}$ regularity. Eventually, by further reducing the value of the charge, we show that the ball is a rigid minimizer. The techniques we develop generalize some results of \cite{gnrI} and need  a  fine analysis of the regularity of almost minimizers of the perimeter in a regime which is not classic in literature. Eventually we show that in the flat space the problem does not admit minimizers when the charge surpasses an explicit bound.
\end{abstract}

\maketitle
\tableofcontents

\section{Introduction}
In this paper, we consider a geometric variational problem motivated by  models for charged liquid drops recently studied in a series of papers
\cite{gnrI,gnrII,MurNov,murnovruf,murnovruf2,DHV}. One of the main features of these problems is the strong competition between surface tension and charge repulsion. 
In particular, as opposed to the much studied  Gamow liquid drop model (see  \cite{KnuMu,ChoMuTo}), 
the non-local effects  often dominate the cohesive forces leading to  singular behaviors.    The aim of the paper is to consider the case of very strong short-range repulsion between the charges, thus completing the program started in \cite{gnrI}.

\smallskip

We now introduce the model. Given $\alpha\in(0,N)$ and a measurable set $E\subset\R^N$, we define the Riesz interaction energy 
\begin{equation}\label{def:I}
\I_\alpha(E)=\inf_{\mu(E)=1} \int_{\R^N\times \R^N}\frac{d\mu(x)\,d\mu(y)}{|x-y|^{N-\alpha}}.
\end{equation}
This energy coincides with the inverse of the $\alpha-$capacity, see \eqref{Capalpha}. Letting $P(E)$ denotes the perimeter of $E$ (see \cite{maggi}), we consider for every charge $Q>0$ the functional
\[
 \FaQ(E)=P(E)+Q^2 \Ia(E).
\]
While postponing the discussion about the precise class in which we are minimizing, the aim of the paper is to study for $m>0$ the problem 
\[
\min_{|E|=m}\FaQ(E).
\] 
By  a  scaling argument,  up to renaming the constant $Q$, it is enough to consider the case $m=\omega_N$, where $\omega_N$ is the volume of the unit ball $B_1$.\\

This question is motivated by the model for an electrically charged liquid drop in absence of gravity, introduced by Lord Rayleigh \cite{Ray} in
 the physically relevant case $N=3$ and $\alpha=2$,  and later investigated by many authors (see for instance \cite{Tay,Zel,FonFri,gnrI,MurNov,murnovruf,julin}).
We proved in \cite{gnrI} that, quite surprisingly, for every $N\ge 2$ and $\alpha\in(1,N)$ (in particular  in the Coulombic case $\alpha=2$), the problem is ill-posed. 
Indeed, in that case, we can show that 
\[
 \inf_{|E|=\omega_N, E \textrm{ smooth}} \FaQ(E)= P(B_1).
\]
In words,  starting from smooth sets the lower semicontinuous envelope of the energy $\FaQ$ in $L^1(\R^N)$
reduces to the perimeter. To restore well-posedness of the problem one needs to impose some extra regularity conditions 
such  as bounds on the curvature \cite{gnrI}, entropic terms \cite{MurNov} or the convexity of competitors \cite{gnrII}.  

Later on, it was shown in \cite{murnovruf}  that at least if $N=2$ and $\alpha=1$, the problem admits the ball as unique minimizer as long as $Q$ lies below an explicit threshold, and that no-existence occurs otherwise. 

\vspace{.3cm}

The aim of this paper is  to complement the picture in the case $\alpha\in(0,1]$ for $N\ge 2$. 

\subsection{Main results}
  
As already mentioned above, the first difficulty with this model is to properly define the class of competitors. Indeed, while both the perimeter and $\Ia$ are well-defined in
the class of smooth compact sets, this class does not enjoy good compactness properties. For variational problems involving the perimeter, the usual setup is the one of
sets of finite perimeter (see \cite{maggi}) where we identify two sets $E$ and $F$ if they are equal up to a Lebesgue-negligible set. 
However, it is not hard to see that $\Ia$ is not well-behaved under such identification (we have $\Ia(E)=\Ia(F)$ if $E=F$ outside a set of zero $\alpha-$capacity), see \cite{gnrI}.
As advocated in \cite{murnovruf,murnovruf2} for $N=2$ and $\alpha=1$, we will consider here the class
\begin{equation}\label{def:S}
\S=\left\{E\subset \R^N\,:\, \text{ $E$ is  compact and }\, P(E)=\mathcal H^{N-1}(\partial E)<+\infty  \right\}.
\end{equation} 
We will always identify sets in $\S$ which differ only on a set of Lebesgue measure zero (and thus actually agree $\H^{N-1}$ a.e.), see Remark \ref{rem:welldef}. The variational problem we consider is thus 
\begin{equation}\label{mainprob}
\min_{|E|=\omega_N, E\in \S}\FaQ(E).
\end{equation}
The main result of this paper is the following.

\begin{theorem}\label{thm:main}
	For every $N\ge 2$ and $\alpha\in(0,1]$, there exists $Q_0=Q_0(N,\alpha)>0$ such that for every $Q\le Q_0$, balls are the only minimizers of \eqref{mainprob}.
	%\[
	%\min\left\{ P(E)+Q^2\Ia(E)\,:\, |E|=\omega_n,\,\, E\subset \mathcal S  \right\}.
	%\]
\end{theorem}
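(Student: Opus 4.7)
The plan is to follow the three-step scheme indicated in the abstract: establish existence and regularity of minimizers for small $Q$, show these minimizers are $C^{1,\gamma}$-close to a ball, and conclude via a rigidity (Fuglede-type) argument for the ball.

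For existence, I would start with a minimizing sequence $E_n\in\mathcal{S}$ with $|E_n|=\omega_N$. The energy bound yields a uniform control on $P(E_n)$, and a concentration-compactness argument (cutting at large scales and discarding non-concentrated pieces) produces, up to translations, $L^1$ convergence to a candidate minimizer $E^*$ of the correct mass. The delicate point is that $\mathcal{S}$ is not $L^1$-closed and $\mathcal{I}_\alpha$ is not generally $L^1$-lower semicontinuous; in the regime $\alpha\in(0,1]$, however, the heuristic announced in the abstract is that $\mathcal{I}_\alpha$ is controlled by the perimeter at small scales, and this control should imply both lower semicontinuity along the sequence and the membership $E^*\in\mathcal{S}$. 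Without such a bound the ill-posedness result of \cite{gnrI} for $\alpha\in(1,N)$ shows that existence would genuinely fail.

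Once existence is established, $Q^2\mathcal{I}_\alpha$ acts as a bounded perturbation of the perimeter given an a priori bound on $P(E^*)$, so any minimizer is a $\Lambda$-minimizer of the perimeter in the sense of Almgren with $\Lambda\to 0$ as $Q\to 0$. Classical De Giorgi--Almgren regularity then gives that $\partial E^*$ is $C^{1,\gamma}$ outside a singular set of Hausdorff dimension at most $N-8$. The energy inequality $\mathcal{F}_{\alpha,Q}(E^*)\le \mathcal{F}_{\alpha,Q}(B_1)$ together with the Fusco--Maggi--Pratelli quantitative isoperimetric inequality forces the Fraenkel asymmetry of $E^*$ to be $O(Q)$, and uniform $\Lambda$-minimality estimates upgrade this $L^1$-proximity to $C^{1,\gamma}$-proximity to some translate $B_1(x^*)$ (and in particular the singular set is empty for $Q$ small).

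For the concluding step I would invoke the stability of the ball under $C^{1,\gamma}$-small perturbations. Writing the competitor as a normal graph $B_{1+u}$ over $B_1$, a Fuglede-type expansion provides the familiar coercive bound $P(E)-P(B_1)\gtrsim\|u\|_{H^1}^2$ (after modding out translations), while on the capacitary side one must show a one-sided estimate of the form $\mathcal{I}_\alpha(E)-\mathcal{I}_\alpha(B_1)\ge -C\|u\|_{L^\infty}\|u\|_{H^1}$ or another lower-order quantity. For $Q$ small the perimeter gain then dominates, forcing $u\equiv 0$ and hence $E^*=B_1(x^*)$. The main obstacle I foresee is Step~1: ruling out pathological minimizing sequences requires using $\alpha\le 1$ in a quantitative way to show that, unlike in the case $\alpha\in(1,N)$, fine boundary decorations cannot simultaneously decrease the Riesz term and remain compatible with a bounded perimeter. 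A secondary technical hurdle is the improved convergence from $L^1$ to $C^{1,\gamma}$ in Step~2, which requires regularity estimates uniform in the capacitary perturbation; and in Step~3 the quantitative continuity of $\mathcal{I}_\alpha$ under small $C^{1,\gamma}$ normal perturbations must be sharp enough to close the Fuglede comparison.
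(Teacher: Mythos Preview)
Your three-step architecture matches the paper's, but Step~2 contains a genuine gap that is in fact the main technical contribution of the paper. The claim that ``$Q^2\Ia$ acts as a bounded perturbation of the perimeter, so any minimizer is a $\Lambda$-minimizer in the sense of Almgren'' is not justified and, for $\alpha=1$, is false in the form you need. What one can prove directly (Proposition~\ref{prop:almostmin1}, via Lemma~\ref{cutbis}) is that for competitors $F$ with $E\Delta F\subset B_r$,
\[
P(E)\le P(F)+C(Q^2+r^\alpha)\,r^{N-\alpha},
\]
with error of order $r^{N-\alpha}$, not $r^N$. When $\alpha<1$ the exponent $N-\alpha>N-1$ still suffices for Tamanini's regularity theory. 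When $\alpha=1$, however, the error is $r^{N-1}$, the same scale as the perimeter, and this estimate yields only Reifenberg flatness (Proposition~\ref{prop:reg1}, via \cite{gnrnote}), \emph{not} $C^{1,\gamma}$. To close the gap the paper proves a second almost-minimality (Proposition~\ref{prop:am}) with error controlled by $(\int_{B_r}\mu_E^{2N/(N+1)})^{(N+1)/N}$, and then shows, using an Alt--Caffarelli--Friedman monotonicity argument on the harmonic extension of the potential (Lemmas~\ref{lem:keyharm} and~\ref{lem:estmu}), that on $\delta$-Reifenberg-flat domains this integral decays like $r^{N-1+2\gamma}$ for any $\gamma<\tfrac12$. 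Only this upgraded estimate feeds into the classical theory. None of this machinery is hinted at in your proposal.

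Your existence step also glosses over a real difficulty: since $\S$ is not closed under $L^1$ convergence and $\Ia$ is not $L^1$-lower semicontinuous, the paper does not run concentration-compactness directly on $\FaQ$. Instead it regularizes by adding $\eps\int\mu^2$ (so admissible measures are in $L^2$ and the functional becomes $L^1$-well-defined), proves existence of generalized minimizers for the regularized problem, derives $\eps$-uniform density estimates from the first almost-minimality, and only then sends $\eps\to 0$ with Kuratowski convergence ensuring the limit lies in $\S$. Your Step~3 is essentially correct in outline; the paper's sharper form is $\Ia(B_1)-\Ia(E)\les[\phi]^2_{H^{\alpha/2}(\partial B)}+[\phi]^2_{H^{(2-\alpha)/2}(\partial B)}$, which is then absorbed by $P(E)-P(B_1)$ via Fuglede and \eqref{embed}.
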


We recall that by \cite{gnrI}, if  $\alpha\in (1,N)$, problem   \eqref{mainprob} does not admit minimizers for any value of $Q>0$. The proof of Theorem \ref{thm:main} 
follows the same general scheme as in \cite{KnuMu} (see also \cite{AcFuMo, f2m3,CaFuPra, MuVes,CanGol} where similar strategies have been used).
Inspired by the proof \cite{CicLeo} of the quantitative isoperimetric inequality, the idea is to prove first  existence of (generalized) minimizers for the problem.
Then, the challenge is to prove regularity estimates for minimizers which are  uniform in $Q$. This allows by compactness to reduce the problem to a second order Taylor
expansion of the energy close to the ball (this is the so-called Fuglede type argument). As we will now see, in our case all three steps present serious difficulties.
Let us point out that when $N=2$ and $\alpha=1$, the proof in \cite{murnovruf} is of totally different nature. Indeed, it uses a combination of convexification and Brunn-Minkowski inequalities.\\
While it could be interesting to see if this argument could be extended to the case $\alpha\in(0,1)$, it is intrinsically limited to $N=2$.
We start with existence of minimizers:

\begin{theorem}\label{thm:introexist}
 For every $N\ge 2$ and $\alpha\in(0,1]$, there exists $Q_1=Q_1(N,\alpha)>0$ such that for every $Q\le Q_1$ minimizers of \eqref{mainprob} exist.
\end{theorem}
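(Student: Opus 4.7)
The plan is to apply concentration-compactness for sets of finite perimeter, in the spirit of Lions and Almgren, exploiting the subcritical scaling when $\alpha\in(0,1]$ to rule out loss of mass at infinity. The strategy parallels that used for Gamow's liquid drop model (see e.g.\ \cite{KnuMu}), but the capacity term requires an additional ingredient specific to the sub-unit range of $\alpha$.

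First I would pick a minimizing sequence $(E_n)\subset\S$ with $|E_n|=\omega_N$. Testing with $B_1$ gives the uniform bound $P(E_n)\leq\FaQ(E_n)\leq\FaQ(B_1)=P(B_1)+Q^2\Ia(B_1)$. Standard concentration-compactness for finite-perimeter sets then yields, up to translations and along a subsequence, a decomposition $E_n=\bigsqcup_{j=1}^{K}E_n^{(j)}\sqcup R_n$ where each $E_n^{(j)}$ converges in $L^1_{\mathrm{loc}}$ to a set $F_j$ of positive volume $m_j$, the mutual distances $\dist(E_n^{(j)},E_n^{(k)})$ diverge for $j\neq k$, $|R_n|\to 0$, and $\sum_j m_j=\omega_N$. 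The uniform perimeter bound together with the isoperimetric inequality ensures that $K$ is uniformly bounded, and also rules out the vanishing alternative.

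The key step is to rule out dichotomy ($K\geq 2$). By lower semicontinuity of $P$ and the diverging separations, $\liminf_n P(E_n)\geq\sum_j P(F_j)\geq P(B_1)\sum_j r_j^{N-1}$ with $r_j=(m_j/\omega_N)^{1/N}$. Since $\sum_j r_j^N=1$ and $t\mapsto t^{(N-1)/N}$ is strictly subadditive, one has $\sum_j r_j^{N-1}>1$ whenever $K\geq 2$ and more than one $r_j$ is positive. When the smallest surviving mass is uniformly bounded below, this gives a quantitative gap $\liminf P(E_n)\geq P(B_1)+\delta$, so that $\liminf\FaQ(E_n)\geq P(B_1)+\delta$ strictly exceeds $\FaQ(B_1)=P(B_1)+Q^2\Ia(B_1)$ for $Q$ small enough, contradicting minimality. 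The delicate edge case is when one component $F_1$ carries mass close to $\omega_N$ while the remaining components have arbitrarily small total mass $v$: here a direct comparison suffices, namely discarding the satellites and homothetically dilating $E_n^{(1)}$ to restore the total volume. The perimeter saving is at least $c\,v^{(N-1)/N}$ (isoperimetric plus subadditivity of $t\mapsto t^{(N-1)/N}$), while the cost in $\Ia$ is controlled, via asymptotic additivity of $\mathrm{cap}_\alpha$ on well-separated sets together with the capacitary estimate $\mathrm{cap}_\alpha(F)\lesssim P(F)^{(N-\alpha)/(N-1)}$ (which is in force precisely when $\alpha\in(0,1]$), by $C\,v^{(N-\alpha)/N}$. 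Since $\alpha\leq 1$ gives $(N-\alpha)/N\geq (N-1)/N$, the ratio of cost to saving is bounded in $v$, and for $Q$ small the saving strictly dominates.

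Once $K=1$, the sequence $(E_n)$ is uniformly bounded up to translation, and we extract an $L^1$-limit $E$ with $|E|=\omega_N$. Lower semicontinuity of $P$ together with that of $\Ia$ (obtained by weak-$*$ compactness of the capacitary measures on a bounded set and lower semicontinuity of the Riesz energy) gives $\FaQ(E)\leq\liminf\FaQ(E_n)$, so $E$ attains the infimum. To conclude that $E\in\S$, i.e., $\H^{N-1}(\partial E)=P(E)$, I would establish uniform density estimates for $\FaQ$-quasi-minimizers via the usual cut-and-replace comparisons with small balls. The main technical obstacle is the quantitative dichotomy exclusion: the exponent match $(N-\alpha)/N=(N-1)/N$ occurs exactly at $\alpha=1$, which is the threshold between the existence proved here and the ill-posedness of \cite{gnrI}, so constants must be tracked carefully and the capacitary estimate must be invoked in its sharp form at this borderline value.
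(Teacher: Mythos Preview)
Your approach has a genuine gap at the lower semicontinuity step, and this is precisely the obstacle the paper is built around. You write that lower semicontinuity of $\Ia$ follows from ``weak-$*$ compactness of the capacitary measures on a bounded set and lower semicontinuity of the Riesz energy.'' The second part is fine: if $\mu_n\rightharpoonup^*\mu$ then $I_\alpha(\mu)\le\liminf I_\alpha(\mu_n)$. The problem is admissibility: you need $\mu(E)=1$, but $E$ at this stage is only an $L^1$-limit, i.e.\ an equivalence class of sets up to Lebesgue-null modifications, while $\Ia$ is \emph{not} invariant under such modifications (this is exactly the point of Remark~\ref{rem:welldef} and the discussion preceding the definition of $\S$). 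The weak-$*$ limit $\mu$ may well charge a Lebesgue-null set (for $\alpha=1$, any piece of an $(N-1)$-dimensional surface has positive $1$-capacity), and nothing in your argument forces that set to belong to the representative of $E$ you eventually pick. The remedy you propose---density estimates for $\FaQ$-quasi-minimizers---is circular: to know $E$ is a quasi-minimizer you already need it to be a minimizer, which is what you are trying to prove. And you cannot transfer density estimates from the $E_n$ either, since they are merely a minimizing sequence with no almost-minimality property.

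The paper resolves this by a regularization: it replaces $\Ia$ by $\Iae(E)=\inf_{\mu(E)=1}\{I_\alpha(\mu)+\eps\int\mu^2\}$, which forces admissible measures into $L^2$ and therefore makes $\Iae$ well-defined on $L^1$-classes and lower semicontinuous under $L^1$-convergence. Concentration-compactness then yields \emph{generalized} minimizers $\wE_\eps$ of the regularized functional (dichotomy is not excluded; it is absorbed into the notion of generalized minimizer). Because $\wE_\eps$ is a genuine minimizer, one can prove almost-minimality of the perimeter (Proposition~\ref{prop:almostmin1}) and hence density estimates uniform in $\eps$. These give convergence of $\wE_\eps$ simultaneously in $L^1$ and in the Kuratowski sense as $\eps\to 0$; under Kuratowski convergence $\Ia$ \emph{is} lower semicontinuous, and the limit lies in $\S$. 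Only at the very end, and only for small $Q$, is the generalized minimizer shown to consist of a single component (via the quantitative isoperimetric inequality), which is morally your dichotomy exclusion but carried out on an actual minimizer rather than on a sequence.
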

This result is proven in Theorem \ref{theo:existence} where we prove actually a bit more. Indeed, we show that if $\alpha<1$,   minimizers  exist for every $Q>0$, 
at least in a generalized sense (see Definition \ref{def:gener}). As in \cite{GolNo,FuEsp,gnrII,CanGol}, a  classical first step is to transform the volume constraint into a penalization,
see Lemma \ref{lemLambda}.   Following for example \cite{GolNo, KnMuNov, FraLieb, novprat,CanGol} we would then like
to prove Theorem \ref{thm:introexist} through a concentration-compactness argument. However, since the class $\S$ is not closed under $L^1$ convergence
and because of the issues related to the lower semi-continuity of $\Ia$ raised above, it is not clear how to argue directly for $\FaQ$. 
The idea is thus to first regularize the functional by  penalizing concentration of the charge. While we believe that the precise choice of regularization is not essential,
in line with \cite{MurNov}, for $\eps>0$, we replace $\Ia$ by 
\[
 \Iae(E)=\min_{\mu(E)=1}\left\{\int_{\R^N\times \R^N}\frac{d\mu(x)\,d\mu(y)}{|x-y|^{N-\alpha}}+\eps\int_{\R^N}\mu^2\,\right\}.
\]
In particular, this functional is well defined in $L^1$ i.e. $\Iae(E)=\Iae(F)$ if $E=F$ a.e. and we can prove in Proposition \ref{prop:genmineps} 
the existence of (generalized) minimizers. \\
In order to conclude the proof of Theorem \ref{thm:introexist} and send $\eps$ to zero, we show in Proposition \ref{prop:density}  that minimizers enjoy density estimates which are 
uniform in $\eps$. This is a consequence of a first almost-minimality property of minimizers proven in Proposition \ref{prop:almostmin1}.
Indeed, using a relatively simple lower bound from \cite{murnovruf} on the Riesz interaction energy of the union of two disjoint sets,  we show that  there exists a constant  $C>0$ such that 
if $E$ is a minimizer for the regularized functional and $F$ is such that $E\Delta F\subset B_r$, then 
\begin{equation}\label{eq:introfirstalmostmin}
 P(E)\le P(F)+ C (Q^2+ r^\alpha) r^{N-\alpha}.
\end{equation}
The desired density bounds then follow from \cite{gnrnote}, see also  \cite{maggi,tamanini} when $\alpha<1$.\\

We then turn to regularity:
\begin{theorem}\label{thm:introreg}
 For every $N\ge 2$ and $\alpha\in(0,1]$, there exist $Q_2=Q_2(N,\alpha)>0$ and $\gamma=\gamma(N,\alpha)\in (0,1/2)$ such that for $Q\le Q_2$ minimizers of \eqref{mainprob} are uniformly (in $Q$) $C^{1,\gamma}$.
\end{theorem}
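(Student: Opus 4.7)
The plan is to upgrade the first almost-minimality inequality \eqref{eq:introfirstalmostmin} to a genuine $(\omega, r_0)$-almost-minimality of the perimeter with $\omega(r) \to 0$ as $r \to 0$, uniformly in $Q \le Q_2$, so that the Tamanini regularity theorem for quasi-minimizers of the perimeter yields the claimed $C^{1,\gamma}$ regularity of $\partial E$ with constants independent of $Q$. For $\alpha \in (0,1)$ the desired upgrade is immediate, since \eqref{eq:introfirstalmostmin} can be read for $r \le 1$ as $P(E) \le P(F) + C(1+Q^2)\, r^{N-1+\beta}$ with $\beta := 1-\alpha > 0$, so Tamanini's theorem applies directly and delivers $C^{1,\gamma}$ with any $\gamma < (1-\alpha)/2$, uniformly for $Q \le Q_2$.

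The critical case $\alpha = 1$, where \eqref{eq:introfirstalmostmin} furnishes only the non-decaying modulus $C(Q^2+r)$, is the delicate point. The strategy is to strengthen the capacitary side of the energy comparison: for a minimizer $E$ and a competitor $F$ with $E \Delta F \subset B_r$, one aims at
\[
Q^2\bigl(\Ia(F) - \Ia(E)\bigr) \les Q^2\, r^{N-1+\eta}
\]
for some $\eta > 0$ depending only on $N$. A natural preliminary step is a uniform $L^\infty$-bound on the capacitary potential $v_E(x) := \int |x-y|^{\alpha - N}\, d\mu_E(y)$ of the equilibrium measure $\mu_E$: by the maximum principle for Riesz potentials (valid for $0 < \alpha \le 2$) one has $\|v_E\|_\infty = \Ia(E)$, while the isocapacitary inequality combined with the minimality of $E$ against the ball yields the uniform bound $\Ia(E) \le \Ia(B_1)$. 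This $L^\infty$-control is then used, via a redistribution of $\mu_E$ off the region $E \Delta F$ (or equivalently via a localized Hadamard-type first-variation argument), to produce an admissible measure for $F$ whose Riesz energy differs from $\Ia(E)$ by only a higher-order correction.

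The main obstacle is precisely extracting the gain $r^\eta$ at $\alpha = 1$: the naive redistribution, paired with the pointwise bound $\mu_E(B_r(x)) \les r^{N-1}$ which follows from $v_E(x) \ges r^{-(N-1)}\mu_E(B_r(x))$, only reproduces \eqref{eq:introfirstalmostmin} without improvement, since perimeter and capacity scale identically at this exponent. The improvement must therefore come from a quantitative refinement of Frostman's theorem in this critical regime, ruling out concentration of $\mu_E$ at the scale $r^{N-1}$ by exploiting the minimality of $E$ a second time, together with the density estimates of Proposition~\ref{prop:density} which already pin $\partial E$ at the natural $\H^{N-1}$-scale. This bootstrap between energy minimality and the fine concentration properties of the equilibrium measure is the technical heart of the proof.
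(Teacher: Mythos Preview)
Your treatment of the case $\alpha\in(0,1)$ is correct and matches the paper exactly: the first almost-minimality \eqref{eq:introfirstalmostmin} already has excess $r^{N-\alpha}$ with $N-\alpha>N-1$, so Tamanini's theory applies directly (this is Proposition~\ref{prop:regsmall}).

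For $\alpha=1$, however, your proposal has a genuine gap at precisely the point you flag as ``the technical heart of the proof''. You correctly diagnose that the naive mass bound $\mu_E(B_r)\lesssim r^{N-1}$ is useless, and that one needs strictly better local control of $\mu_E$. But the mechanism you sketch --- a ``quantitative refinement of Frostman's theorem'' obtained by ``exploiting the minimality of $E$ a second time, together with the density estimates'' --- is not an argument: density estimates and a second appeal to energy minimality do not by themselves upgrade the Frostman exponent, since perimeter and $1$-capacity scale identically and any comparison of the type you describe returns the same borderline excess. What is missing is a concrete analytic input that breaks this scaling degeneracy.

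The paper supplies that input in three specific steps that your proposal does not anticipate. First, \eqref{eq:introfirstalmostmin} is just strong enough (via \cite{gnrnote}) to force minimizers to be $(\delta,r_0)$-Reifenberg flat for $Q$ small, which is a purely geometric conclusion rather than a measure-theoretic one. Second, on Reifenberg-flat domains one proves the pointwise H\"older bound $|1-\I_1^{-1}(E)u_E|\lesssim d(\cdot,\partial E)^\gamma$ for some $\gamma$ close to $\tfrac12$ (Lemma~\ref{lem:keyharm}); the engine here is the Alt--Caffarelli--Friedman monotonicity formula applied to the harmonic extension of $u_E$ to $\R^{N+1}_+$, not a variational comparison for $E$. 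This yields $\mu_E\lesssim d(\cdot,\partial E)^{-(1-\gamma)}$ and, after a covering argument, the decay $\bigl(\int_{B_r}\mu_E^{2N/(N+1)}\bigr)^{(N+1)/N}\lesssim r^{N-1+2\gamma}$ (Lemma~\ref{lem:estmu}). Third, this $L^{2N/(N+1)}$ bound is exactly what feeds into the refined almost-minimality \eqref{eq:introsecondalmostmin}, whose proof (Proposition~\ref{prop:am}) hinges on the Sobolev embedding $H^{1/2}\hookrightarrow L^{2N/(N-1)}$ and its dual exponent --- a structure your $L^\infty$ potential bound and mass redistribution do not access. Without the Reifenberg step and the ACF monotonicity argument, there is no known way to extract the gain $r^\eta$ at $\alpha=1$.
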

This result is contained  in Proposition \ref{prop:regsmall} for $\alpha<1$ (see also Remark \ref{rem:regalpha}) and Proposition \ref{prop:regsmall1} for $\alpha=1$. 
On the one hand,  we see from \eqref{eq:introfirstalmostmin} that when $\alpha<1$, we may directly appeal to the classical regularity theory for almost-minimizers of the perimeter, see \cite{tamanini,maggi},
and there is nothing to prove. On the other hand, when $\alpha=1$ the situation is much more delicate. In fact, Proposition \ref{prop:regsmall1} may be seen as one of the main achievements of this paper. 
When $\alpha=1$, while \eqref{eq:introfirstalmostmin} is in general too weak to obtain $C^{1,\gamma}$ regularity, it is still strong enough to yield Reifenberg flatness of $E$ (see Definition \ref{Reifenberg flatness}) when $Q\ll1$  as recently shown in\footnote{as a matter of fact, the present paper served as motivation for \cite{gnrnote}.} \cite{gnrnote}.  In order to improve it to the full $C^{1,\gamma}$ regularity we rely on a second almost-minimality property. 
We show in Proposition \ref{prop:am} that if $\mu_E$ is the optimal charge distribution for $E$ i.e. $\mu_E$ is the minimizer in \eqref{def:I}, and if $E\Delta F\subset B_r$, then 
\begin{equation}\label{eq:introsecondalmostmin}
 P(E)\le P(F)+ C \lt(Q^2\lt(\int_{B_r} \mu_E^{\frac{2N}{N+1}}\rt)^{\frac{N+1}{N}}+ r^N\rt).
\end{equation}
The proof is inspired by \cite[Proposition 4.5]{DHV}. Notice however that we have to deal with difficulties which are quite different from the ones in  \cite{DHV}.
Indeed, on the one hand our  operator is smooth (here it is just the half Laplacian, see \eqref{eq:ELuE}) as opposed to \cite{DHV} where the heart of the problem is the presence of irregular coefficients.
On the other hand in  our case  the charge distribution $\mu_E$ is, a priori, just a measure while in \cite{DHV} it is  known to be   a function in $L^\infty$.
In fact, in light of \eqref{eq:introsecondalmostmin},  the main point here is to prove good integrability properties of the charge distribution $\mu_E$.
This is done in Lemma \ref{lem:keyharm} and Lemma \ref{lem:estmu}
where we prove that for any $\gamma\in (0,1/2)$, if $E$ is a sufficiently flat   Reifenberg sets $E$,
\begin{equation}\label{eq:introReif}
 \lt(\int_{B_r} \mu_E^{\frac{2N}{N+1}}\rt)^{\frac{N+1}{N}}\le C r^{N-1+2\gamma}.
\end{equation}
By comparing this result with the case of the ball, we  can see that this estimate is optimal. It may be seen as an extension to irregular domains of the boundary regularity for the fractional Laplacian developed in \cite{RosSer}.
As in the case of the Laplacian considered in \cite{LemSir}, the main ingredient for the proof is the monotonicity formula of  Alt-Caffarelli-Friedman.
Combining \eqref{eq:introsecondalmostmin} and \eqref{eq:introReif} we find that also for $\alpha=1$, minimizers of \eqref{mainprob} are actually classical almost-minimizers of the perimeter and thus Theorem \ref{thm:introreg} follows. \\

Let us point out that our proof actually applies to a more general class of functionals. Indeed, for $\Lambda\ge 0$ we say that $E$ is a $\Lambda-$minimizer of $\FaQ$ if for every set $F$,
\[
 \FaQ(E)\le \FaQ(F)+\Lambda |E\Delta F|.
\]
Arguing as in Theorem \ref{thm:introreg} we can prove the following result (we state it only for $\alpha=1$ since $\alpha\in(0,1)$ is simpler):
\begin{theorem}\label{thm:cor}
 For every $\gamma\in(0,1/2)$, there is $\bar Q=\bar Q(N,\gamma)>0$ such that for every $Q\le \bar Q$ and $\Lambda\ge 0$, every $\Lambda-$minimizer $E$ of $\F_{1,Q}$ is $C^{1,\gamma}$
 regular outside a singular set $\Sigma$ with $\Sigma=\emptyset$ if $N\le 7$, $\Sigma$ is locally finite if $N=8$ and satisfies $\H^{s}(\Sigma)=0$ is $s> N-8$ and $N\ge 9$.  
\end{theorem}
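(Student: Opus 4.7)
The plan is to mimic the proof scheme of Theorem \ref{thm:introreg}, observing that the $\Lambda |E\Delta F|$ penalty is a lower-order perturbation at small scales which does not disturb either of the two almost-minimality estimates that drive the argument. Concretely, for any competitor $F$ with $E\Delta F\subset B_r$ we have $|E\Delta F|\le \omega_N r^N$, so the extra term $\Lambda \omega_N r^N$ is of the same order as (or smaller than) the ambient errors already present in \eqref{eq:introfirstalmostmin} and \eqref{eq:introsecondalmostmin}. The crucial issue is that the smallness threshold $\bar Q$ has to be independent of $\Lambda$; this will be built in by working at scales small enough that the $\Lambda$-term is absorbed.

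First, repeating the derivation of Proposition \ref{prop:almostmin1} starting from $\Lambda$-minimality instead of true minimality yields, for $\alpha=1$,
\begin{equation}
P(E)\le P(F)+C\bigl(Q^2+r\bigr)r^{N-1}+\Lambda\,\omega_N r^N
\end{equation}
for every $F$ with $E\Delta F\subset B_r$. Up to restricting to $r\le 1$, the last term is controlled by the second one, so the first almost-minimality \eqref{eq:introfirstalmostmin} holds for $\Lambda$-minimizers with a constant independent of $\Lambda$. As in the minimizer case, this is enough to apply the density estimates of \cite{gnrnote} and, for $Q\le \bar Q$ small enough, to obtain Reifenberg flatness of $\partial E$ at small scales, uniformly in $\Lambda$.

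Second, once Reifenberg flatness is at hand, Lemma \ref{lem:keyharm} and Lemma \ref{lem:estmu} apply verbatim since they are purely geometric statements about the harmonic extension associated with the half-Laplacian on the set $E$, with no role played by $\Lambda$. Thus the bound \eqref{eq:introReif} on $\mu_E$ holds, for any prescribed $\gamma\in(0,1/2)$, on all sufficiently small balls. Then adapting the proof of Proposition \ref{prop:am}, which is based on harmonic competitors and does not interact with the volume-penalty term beyond the trivial bound $|E\Delta F|\le \omega_N r^N$, we obtain
\begin{equation}
P(E)\le P(F)+C\bigl(Q^2\, r^{N-1+2\gamma}+r^N\bigr)+\Lambda\,\omega_N r^N.
\end{equation}
Since $\gamma<1/2$ implies $N-1+2\gamma<N$, choosing $r\le r_0$ for some scale depending only on $N$ and $\gamma$ (and not on $\Lambda$) shows that $E$ is a classical $(K,N-1+2\gamma)$-almost-minimizer of the perimeter in the sense of Tamanini \cite{tamanini}.

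Finally, invoking the classical regularity theory for almost-minimizers of the perimeter (Tamanini, together with Federer's dimension reduction and the Simons stability for the singular set of area-minimizing rectifiable currents) yields that $\partial^* E$ is locally $C^{1,\gamma}$ and that $\Sigma=\partial E\setminus \partial^*E$ has the prescribed dimensional structure: empty for $N\le 7$, locally finite for $N=8$, and $\H^s(\Sigma)=0$ for every $s>N-8$ when $N\ge 9$. The main obstacle in this plan is not the regularity theory itself, which is by now standard, but rather the verification that the two almost-minimality inequalities extend to $\Lambda$-minimizers with constants not depending on $\Lambda$; this hinges precisely on the small-scale absorption of the $\Lambda\omega_N r^N$ term, which is consistent with picking the smallness threshold $\bar Q$ before restricting to a sufficiently small radius.
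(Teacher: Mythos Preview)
Your overall plan matches exactly what the paper suggests (``Arguing as in Theorem~\ref{thm:introreg}''), and the idea that the $\Lambda|E\Delta F|$ term is a higher-order perturbation that does not affect the choice of $\bar Q$ is the right one. However, one step is stated incorrectly and would fail as written.

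You claim that ``up to restricting to $r\le 1$, the last term is controlled by the second one, so the first almost-minimality \eqref{eq:introfirstalmostmin} holds for $\Lambda$-minimizers with a constant independent of $\Lambda$.'' This is false: the term $\Lambda\omega_N r^N$ is of the \emph{same} order in $r$ as the $Cr\cdot r^{N-1}$ part of $C(Q^2+r)r^{N-1}$, so no restriction of the form $r\le 1$ can absorb it when $\Lambda$ is large. The same issue reappears later when you assert ``Reifenberg flatness \dots\ uniformly in $\Lambda$'' and ``$r_0$ depending only on $N$ and $\gamma$ (and not on $\Lambda$)''. The correct statement is that the almost-minimality inequality reads
\[
P(E)\le P(F)+CQ^2 r^{N-1}+(C+\Lambda\omega_N)\,r^N,
\]
which is still of the form treated in \cite{gnrnote}: the coefficient of the \emph{borderline} term $r^{N-1}$ is $CQ^2$ (independent of $\Lambda$), while $\Lambda$ only enters the \emph{higher-order} term $r^N$. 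Consequently the Reifenberg flatness parameter $\delta$ is controlled by $Q$ alone --- this is why $\bar Q$ can be chosen independently of $\Lambda$ --- whereas the scale $r_0$ at which flatness (and later \eqref{eq:introReif}) holds \emph{does} depend on $\Lambda$. The same remark applies to the second almost-minimality and the Tamanini condition: the constant $K$ in the $(K,N-1+2\gamma)$ almost-minimality depends on $\Lambda$, which is harmless for the qualitative conclusion of the theorem (the $C^{1,\gamma}$ bounds and the scale below which $\partial E$ is regular may depend on $\Lambda$, but the statement does not claim otherwise). Your concluding sentence in fact gets this right; you should propagate that understanding back through the earlier paragraphs and drop the three uniformity claims flagged above.
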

In particular, this answers a question left open in \cite{murnovruf2}.\\

Thanks to Theorem \ref{thm:introreg} and the quantitative isoperimetric inequality, if $Q$ is small enough then up to translation any minimizer of \eqref{mainprob} is nearly spherical.
By this we mean that fixing $\gamma\in (0,1)$ (which remains implicit), $|E|=\omega_N$, the barycenter of $E$ is in zero and there is $\phi: \partial B_1\mapsto \R$ with
$\|\phi\|_{C^{1,\gamma}(\partial B_1)}\le 1$ such that 
\[
 \partial E=\{(1+\phi(x))x \ : \ x\in \partial B_1\}.
\]
The proof of Theorem \ref{thm:main} is thus concluded once we show  the minimality of the ball among nearly spherical sets:
\begin{theorem}\label{thm:nearlyspherical}
Let $\alpha\in(0,2)$. There exist $Q_3=Q_3(N,\alpha, \gamma)>0$ and $\eps=\eps(N,\alpha, \gamma)>0$ such that for every nearly spherical
set $E$ with $\|\phi\|_{W^{1,\infty}(\partial B_1)}\le \eps$, and every $Q\le Q_3$,
\[
 \FaQ(B_1)\le \FaQ(E).
\]
Moreover, equality is attained only if $E=B_1$.
\end{theorem}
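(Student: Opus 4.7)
The plan is to run a classical Fuglede-type argument: combine the quantitative isoperimetric inequality on the perimeter side with a sharp two-sided expansion of the Riesz interaction energy around the ball, and then choose $Q$ small enough so that the perimeter gain dominates any capacitary loss.

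\emph{Step 1 (Perimeter).} Since $E$ is nearly spherical with $|E|=\omega_N$ and barycenter at the origin, the classical Fuglede inequality yields constants $c_0=c_0(N)>0$ and $\eps_0=\eps_0(N)>0$ such that, whenever $\|\phi\|_{W^{1,\infty}(\partial B_1)}\le\eps_0$,
\[
 P(E)-P(B_1)\ge c_0\,\|\phi\|_{H^1(\partial B_1)}^2.
\]

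\emph{Step 2 (Capacitary term).} I would establish the matching estimate
\[
 |\Ia(E)-\Ia(B_1)|\le C(N,\alpha)\,\|\phi\|_{H^1(\partial B_1)}^2.
\]
The tool is a radial diffeomorphism $\Phi:\R^N\to\R^N$ with $\Phi(B_1)=E$, supported in a neighborhood of $\partial B_1$ and satisfying $\|\Phi-\mathrm{Id}\|_{C^1}\lesssim\|\phi\|_{W^{1,\infty}}$. The push-forward $\Phi_\#\mu_{B_1}$ of the equilibrium measure of $B_1$ is a competitor for $\Ia(E)$, and symmetrically $(\Phi^{-1})_\#\mu_E$ is a competitor for $\Ia(B_1)$. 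Writing
\[
 \frac{1}{|\Phi(\tilde x)-\Phi(\tilde y)|^{N-\alpha}}=\frac{1}{|\tilde x-\tilde y|^{N-\alpha}}\bigl(1+L_\phi(\tilde x,\tilde y)+Q_\phi(\tilde x,\tilde y)+\mathcal R_\phi(\tilde x,\tilde y)\bigr),
\]
with $L_\phi$ linear in $\phi$, $Q_\phi$ quadratic and $\mathcal R_\phi=O(\|\phi\|_{W^{1,\infty}}^3)$, I would show that the linear contribution, after integration against $\mu_{B_1}\otimes\mu_{B_1}$, reduces, by rotational invariance of $\mu_{B_1}$ and the constancy of the equilibrium potential on $\partial B_1$, to a constant multiple of $\int_{\partial B_1}\phi\,d\H^{N-1}$, which is itself $O(\|\phi\|_{L^2(\partial B_1)}^2)$ by the volume constraint $|E|=\omega_N$; this identifies $B_1$ as a critical point of $\Ia$ under volume-preserving deformations. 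The quadratic contribution is bounded by $C\|\phi\|_{L^\infty(\partial B_1)}^2$, and the $W^{1,\infty}$ smallness together with interpolation controls it by $C\|\phi\|_{H^1(\partial B_1)}^2$. The analogous computation with $\mu_E$ pulled back to $B_1$ yields the reverse inequality, giving the desired two-sided bound.

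\emph{Step 3 (Conclusion).} Summing the two estimates gives
\[
 \FaQ(E)-\FaQ(B_1)\ge (c_0-C Q^2)\,\|\phi\|_{H^1(\partial B_1)}^2,
\]
which is nonnegative as soon as $Q\le Q_3:=\sqrt{c_0/(2C)}$, and strictly positive unless $\phi\equiv 0$, i.e.\ $E=B_1$.

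The main obstacle is Step 2, and specifically the cancellation of the first-order term in the $\phi$-expansion of $\Ia$. This is a shape derivative computation that crucially uses the constancy of the Riesz equilibrium potential on $\partial B_1$, together with the volume constraint, to reduce the would-be first-order correction to a harmless $O(\|\phi\|^2)$ contribution. Controlling the quadratic term by a Sobolev norm compatible with the $H^1$ gain from Fuglede is also delicate; it is here that the restriction $\alpha<2$ enters, guaranteeing enough regularity of the Riesz kernel to absorb the second-order terms.
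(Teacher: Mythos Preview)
Your overall strategy (Fuglede for the perimeter plus a quadratic bound on the capacitary deficit, then absorb for small $Q$) matches the paper. The gap is in Step~2, and it is genuine. The quadratic part of your kernel expansion is \emph{not} $O(\|\phi\|_{L^\infty}^2)$: when you expand $|\Phi(x)-\Phi(y)|^2$ you inevitably produce terms of the form $(|x|^2+|y|^2)\bigl((\phi_x-\phi_y)/|x-y|\bigr)^2$, which are pointwise bounded by $\|\nabla\phi\|_{L^\infty}^2$, not $\|\phi\|_{L^\infty}^2$. A bound $\lesssim\|\phi\|_{W^{1,\infty}}^2$ cannot be absorbed by $c_0\|\phi\|_{H^1}^2$, and no interpolation using the $C^{1,\gamma}$ bound fixes this: interpolation between $L^2$ and $C^{1,\gamma}$ gives at best $\|\phi\|_{L^\infty}\lesssim\|\phi\|_{L^2}^a$ with $a<1$, which makes the capacitary loss dominate for small $\phi$. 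The paper does not use a pointwise bound; it shows that \emph{after integration} against the Riesz kernel and the optimal measures, these derivative terms are controlled by the fractional seminorm $[\phi]_{H^{(2-\alpha)/2}(\partial B)}^2$ (see \eqref{reduction} and the estimate \eqref{eq:claimF} on the radial integral $F(\theta)$), which is strictly below $H^1$ and hence absorbable.

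There are two further ingredients you are missing. First, for $\alpha<2$ the equilibrium measure is supported on the full ball, not on $\partial B_1$, and blows up like $d(\cdot,\partial B)^{-\alpha/2}$; the relevant integral is against $g\otimes g$ where $g=T^{-1}_\#\mu_E$ involves the \emph{unknown} optimal measure of $E$. To replace $g$ by $\mu_B$ in the quadratic estimate one needs the a~priori bound $g\lesssim d(\cdot,\partial B)^{-\alpha/2}$, which the paper obtains in \eqref{hypg} via the boundary regularity theory for the fractional Laplacian (Ros--Oton--Serra). Second, your claim that the linear term reduces to a multiple of $\int_{\partial B}\phi$ is only correct when integrated against $\mu_B\otimes\mu_B$; against $g\otimes g$ there are cross terms $I_\alpha(g-\mu_B,\phi g)$ and $I_\alpha(\mu_B,\phi(g-\mu_B))$ which the paper absorbs using the positivity of $I_\alpha(g-\mu_B)$ (see \eqref{almostdone}--\eqref{almostdone2}). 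In short, the missing idea is that the capacitary deficit is controlled not by $\|\phi\|_{L^\infty}^2$ but by genuine fractional norms $[\phi]_{H^{\alpha/2}}^2+[\phi]_{H^{(2-\alpha)/2}}^2$, and extracting these requires both the boundary estimate on $\mu_E$ and the careful kernel analysis of Proposition~\ref{prop:comparison}.
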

This shows in particular the stability of the ball under small $C^{1,\gamma}$ perturbations if $Q$ is small enough. The counterpart of Theorem \ref{thm:nearlyspherical} for  the Coulomb case $\alpha=2$ has been obtained in \cite[Theorem 1.7]{gnrI}.
The main point of the proof is to show in Proposition \ref{prop:comparison} that 
\[
 \Ia(B_1)-\Ia(E)\le C\lt([\phi]^2_{H^{\frac{\alpha}{2}}(\partial B_1)}+[\phi]_{H^{\frac{2-\alpha}{2}}(\partial B_1)}^2\rt).
\]
The proof of this quantitative estimate for $\Ia$ follows the same general strategy as in \cite{gnrI}. As in \cite{gnrI}, the  difficulty comes from the fact that the optimal measure $\mu_E$ is not explicitly given in terms of $E$. 
There are however some important differences between the Coulomb case $\alpha=2$ and the non-local case $\alpha\in (0,2)$.
 In the Coulomb case, the charges are concentrated on the boundary. 
 This  makes it easier to express the Riesz interaction energy in terms of  $\phi$ with respect to the case $\alpha\in(0,2)$ where  the optimal measure $\mu_E$ has support equal to $E$. 
 Another difficulty here comes from the blow-up of  $\mu_E$ near $\partial E$. 
 \\

The last result of this paper is a nonexistence result in dimension $2$. 
\begin{theorem}\label{thm:nonexintro}
 Let $N=2$ and $\alpha\in(0,1]$. Then, there exists $Q_4=Q_4(N,\alpha)>0$ such that for $Q\ge Q_4$, there are no  minimizers of \eqref{mainprob}. 
\end{theorem}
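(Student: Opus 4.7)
The plan is to show that once $Q$ exceeds an explicit threshold, the infimum of $\FaQ$ over $\S$ is not attained because it can only be approached by configurations of disjoint balls sent mutually to infinity. The key mechanism is the strict subadditivity of the Riesz capacity on disconnected compact sets.

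For $E\in\S$ with $|E|=\pi$, let $\{E_i\}$ denote its (at most countable) connected components and set $r_i := \sqrt{|E_i|/\pi}$, so that $\sum_i r_i^2 = 1$. I would first derive the deterministic lower bound
\[
 \FaQ(E) \ge \Phi(\{r_i\}) := 2\pi\sum_i r_i + \frac{Q^2\,\Ia(B_1)}{\sum_i r_i^{2-\alpha}},
\]
with \emph{strict} inequality whenever $E$ has two or more components. The ingredients are the planar isoperimetric inequality on each $E_i$ combined with $P(E)=\sum_i P(E_i)$ for sets in $\S$, giving $P(E)\ge 2\pi\sum_i r_i$; the Faber--Krahn inequality for $\alpha$-capacity, $\C(E_i)\le \C(B_{r_i})=r_i^{2-\alpha}/\Ia(B_1)$; and the classical subadditivity $\C(E)\le\sum_i\C(E_i)$. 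The strictness of subadditivity for multi-component compact $E$ is the heart of the matter: decomposing any admissible $\mu$ as $\mu=\sum_i\mu_i$ with $\mu_i$ concentrated on $E_i$ and expanding $\Ia$, the cross terms $\int\!\!\int_{E_i\times E_j}|x-y|^{\alpha-N}d\mu_i d\mu_j$ are bounded below by $(\mathrm{diam}\,E)^{\alpha-N}\mu_i(E_i)\mu_j(E_j)>0$, and optimizing the mass split produces a strict increase, yielding $\Ia(E)>\Ia(B_1)/\sum_i r_i^{2-\alpha}$.

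For the matching upper bound, for any admissible $\{r_i\}$ I construct competitors $E_n\in\S$ as disjoint balls $B_{r_i}(x_i^n)$ whose pairwise distances tend to $\infty$: the perimeter is exactly $2\pi\sum_i r_i$ and the Riesz energy tends to $\Ia(B_1)/\sum_i r_i^{2-\alpha}$, so $\FaQ(E_n)\to \Phi(\{r_i\})$. Together with the lower bound,
\[
 \inf_{E\in\S,\,|E|=\pi}\FaQ(E) = V_Q := \inf\big\{\Phi(\{r_i\}) : r_i>0,\ \textstyle\sum_i r_i^2=1\big\}.
\]
Testing $\Phi$ on the two equal-ball configuration $r_1=r_2=1/\sqrt 2$ yields $V_Q\le 2\pi\sqrt 2+Q^2\Ia(B_1)\,2^{-\alpha/2}$, which is strictly less than $\FaQ(B_1)=2\pi+Q^2\Ia(B_1)$ as soon as $Q^2>2\pi(\sqrt 2-1)/[\Ia(B_1)(1-2^{-\alpha/2})]$. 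Since $\alpha\in(0,1]$ makes $1-2^{-\alpha/2}>0$, this defines $Q_4(N,\alpha)$.

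To conclude non-existence for $Q\ge Q_4$: any $E\in\S$ with $|E|=\pi$ is either (a) connected, in which case the standard isoperimetric inequality together with the Faber--Krahn inequality applied to $\Ia$ itself give $\FaQ(E)\ge\FaQ(B_1)>V_Q$; or (b) has at least two components, in which case the strict version of the lower bound gives $\FaQ(E)>\Phi(\{r_i\})\ge V_Q$. Either way $\FaQ(E)>V_Q=\inf\FaQ$, so no minimizer exists. I expect the main technical point to be quantifying the strict subadditivity uniformly for configurations with possibly infinitely many small components; this is resolved by the observation that $\mathrm{diam}\,E<\infty$ controls all off-diagonal kernels uniformly, while the summability of $\sum_i r_i^{2-\alpha}$ needed for $\Phi$ to be finite follows for $\alpha\in(0,1]$ from $\sum_i r_i\le P(E)/(2\pi)<\infty$ combined with $r_i\le 1$.
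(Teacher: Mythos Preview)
Your argument has a fatal sign error at its core. You invoke ``the Faber--Krahn inequality for $\alpha$-capacity, $\C(E_i)\le \C(B_{r_i})$'', but the isocapacitary inequality goes the \emph{other} way: among sets of given volume the ball \emph{minimizes} the $\alpha$-capacity, i.e.\ $\C(E_i)\ge \C(B_{r_i})$ (equivalently $\Ia(E_i)\le\Ia(B_{r_i})$); this is exactly what the paper records via the fractional P\'olya--Szeg\H{o} inequality. With the correct direction, subadditivity $\C(E)\le\sum_i\C(E_i)$ no longer yields any lower bound on $\Ia(E)$, and your key inequality $\FaQ(E)\ge\Phi(\{r_i\})$ collapses. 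In fact that inequality is simply false: for a connected set it reads $\FaQ(E)\ge\FaQ(B_1)$, yet a thin annulus $\{r\le|x|\le R\}$ with $R^2-r^2=1$ is connected, lies in $\S$, and for large $r$ satisfies $\FaQ(\text{annulus})\lesssim r+Q^2 r^{\alpha-2}$ (test with the uniform measure; for $\alpha=1$ one picks up a harmless $\log r$), which for suitable $r$ is far below $\FaQ(B_1)\sim Q^2$ when $Q$ is large. So both your step (a) for connected sets and the general lower bound fail, and with them the identification $\inf\FaQ=V_Q$ and the whole non-existence conclusion.

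The paper's proof proceeds along an entirely different route that does not attempt to control $\Ia(E)$ from below. One assumes a minimizer $E$ exists, cuts it by an arbitrary hyperplane $H_{\nu,t}$, and compares with the two halves sent to infinity; minimality gives $Q^2 I_\alpha(\mu^+_{\nu,t},\mu^-_{\nu,t})\le\H^1(E\cap H_{\nu,t})$. Integrating over all $(\nu,t)$ turns the right-hand side into $|E|$ and the left into $Q^2\int\!\!\int|x-y|^{\alpha-1}\,d\mu\,d\mu$, forcing $1\gtrsim Q^2/\mathrm{diam}(E)^{1-\alpha}$. Since $P(E)\gtrsim\mathrm{diam}(E)$ in the plane, this yields $\FaQ(E)\gtrsim Q^{2/(1-\alpha)}$, which contradicts the upper bound $\lesssim Q^{2/(1+\alpha)}$ obtained from many small balls. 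The mechanism is a diameter lower bound forced by the cutting inequality, not any comparison with $\FaQ(B_1)$.
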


\medskip 

The  paper is divided into four parts. In Section \ref{sec:prelimininary} we collect the precise notation and definitions used  in the paper. In Section \ref{sec:genmin}, we prove Theorem \ref{thm:introexist} and Theorem \ref{thm:introreg} about existence and regularity of minimizers for \eqref{mainprob}.
We then prove Theorem \ref{thm:nearlyspherical} in Section \ref{sec:ballmin}. A last short section is dedicated to the proof of the nonexistence Theorem \ref{thm:nonexintro}.

\section{Notation}\label{sec:prelimininary}
We will use the notation $A\les B$ to indicate that there exists a constant $C>0$, typically 
depending on the dimension $N$ and on $\alpha$ such that $A\le C B$ (we will specify when $C$ depends on other quantities).  We  write $A\sim B$ if   $A\lesssim B\lesssim A$. 
We use in hypotheses the notation $A\ll B$ to indicate that there exists a (typically small) universal constant $\eps>0$ depending only on $N$ and $\alpha$ such that if $A\le \eps B$ then the conclusion of the statement holds.
\\

For a measurable set  $E\subset \R^N$ and  an open  set $\Omega\subset \R^N$, we denote by $|E|$ the Lebesgue measure of $E$ and  $P(E,\Omega)$ its relative perimeter in $\Omega$. When $\Omega=\R^N$ 
we simply write $P(E,\Omega)=P(E)$ (see \cite{maggi}). We use $\partial E$ to indicate  the topological boundary of a set $E$, $\partial^M E$ the measure-theoretic boundary
and $\partial^* E$ for the reduced boundary.\\

\subsection{Fractional Sobolev spaces, Laplacians and capacities}
We collect here some standard notation and basic properties about fractional Sobolev spaces, Laplacians and capacities.
We refer for instance to \cite{LiebLoss, landkof,DiPaVa} for more information on this topic. For the Fourier transform we use the convention
\[
 \widehat{u}(\xi)=\int_{\R^N} e^{-2i\pi \xi\cdot x} u(x) dx.
\]
For $s\in \R$ we then define the (homogeneous) $H^s$ semi-norm as 
\[
 [u]^2_{H^s(\R^N)}=\int_{\R^N} |\xi|^{2s} |\widehat{u}|^2 d\xi.
\]
When there is no risk of confusion we simply write $[u]_{H^s}$ for $[u]_{H^s(\R^N)}$. We define the $s-$fractional Laplacian by its Fourier transform:
\[
 \widehat{(-\Delta)^s u}=|\xi|^{2s} \widehat{u}
\]
so that by Parseval identity
\begin{equation}\label{Parseval}
 [u]^2_{H^s(\R^N)}=\int_{\R^N} u (-\Delta)^s u.
\end{equation}
For $s\in(0,1)$, there exists $C(N,s)>0$ such that 
\begin{equation}\label{LaplacePV}
 (-\Delta)^s u(x)= C(N,s) \int_{\R^N} \frac{u(x)-u(y)}{|x-y|^{N+2s}}\,dy,
\end{equation}
where the integral is intended in the principal value sense. We then have the alternative formula for the $H^s$ semi-norm
\begin{equation}\label{Hsbis}
 [u]^2_{H^s(\R^N)}=\frac{C(n,s)}{2}\int_{\R^N\times \R^N} \frac{(u(x)-u(y))^2}{|x-y|^{N+2s}} dx dy.
\end{equation}
We will also use fractional Sobolev spaces defined on the unit sphere $\partial B$. For these we take \eqref{Hsbis} as starting point and define for $\phi: \partial B\to \R$ and $s\in (0,1)$,
\begin{equation}\label{HsSphere}
 [\phi]^2_{H^s(\partial B)}=\int_{\partial B\times \partial B} \frac{(\phi(x)-\phi(y))^2}{|x-y|^{N-1+2s}} dx dy.
\end{equation}
Let us point out that we use here a slight abuse of notation and do not distinguish between the volume measure on $\R^N$ and the  one on the sphere.
We recall that for $0<s<s'<1$, if we denote  $\bar \phi=\frac{1}{P(B)}\int_{\partial B} \phi$ we have 
\begin{equation}\label{embed}
 \int_{\partial B}(\phi-\bar \phi)^2\les [\phi]_{H^s(\partial B)}^2\le 2^{2(s'-s)} [\phi]_{H^{s'}(\partial B)}^2\les \int_{\partial B} |\nabla \phi|^2,
\end{equation}
where we write $\nabla \phi$ for the tangential gradient and where the implicit constants depend on $N$, $s$ and $s'$.  Indeed, the first inequality follows from Cauchy-Schwarz as
\begin{multline*}
 \int_{\partial B}(\phi-\bar \phi)^2\le \frac{1}{P^2(B)}\int_{\partial B}\lt(\int_{\partial B} |\phi(x)- \phi(y)| dy\rt)^2 dx\\
 \le \frac{1}{P^2(B)}\int_{\partial B} \lt(\int_{\partial B} \frac{(\phi(x)-\phi(y))^2}{|x-y|^{N-1+2s}} dy\rt)\lt(\int_{\partial B} |x-y|^{N-1+2s} dy\rt) dx\\
 \les \int_{\partial B\times \partial B} \frac{(\phi(x)-\phi(y))^2}{|x-y|^{N-1+2s}} dx dy.
\end{multline*}
The second inequality in \eqref{embed} is immediate while the third can be  deduced by \cite[Proposition 2.4 and Remark 2.8]{DNRV}.

%
%adapting the computations in the flat case, see \cite[Proposition 2.2]{DiPaVa} (\textcolor{blue}{M: non so se basta os e c e da trovare una referenza piu precisa/se c e una dimostrazione molto semplice} \corr{}{}{Non basta osservare che $|x-y|\le 2$?} \textcolor{blue}{no}).\\

For $\alpha\in(0,N)$ and $\mu$ a Radon measure,  we define the Riesz interaction energy as
\[
I_\alpha(\mu)=\int_{\R^N\times\R^N}\frac{d\mu(x)\,d\mu(y)}{|x-y|^{N-\alpha}}.
\]
With this notation, definition \eqref{def:I} becomes
\[
\I_\alpha(E)=\min_{\mu(E)=1}I_\alpha(\mu).
\]

\begin{remark}\label{rem:welldef}
 Recalling the definition \eqref{def:I} of $\S$, we see that for $\alpha\in(0,1]$, $\Ia$ is well defined in $\S$ in the sense that if $E,F\in \S$ with $|E\Delta F|=0$ then actually
 $\H^{N-1}(E\Delta F)=0$ and thus $\Ia(E)=\Ia(F)$ (since $\Ia(E\Delta F)=\infty$, see e.g. \cite[Theorem 8.7]{Mattila}).
\end{remark}

For every $\alpha\in (0,N)$, there exists a constant $C'(N,\alpha)>0$ such that for any radon measure $\mu$,  
 the associated potential 
\[
u(x)=\int_{\R^N} \frac{d\mu(y)}{|x-y|^{N-\alpha}},
\]
satisfies  (see \cite{LiebLoss})
\begin{equation}\label{eq:ELuE}
(-\Delta)^{\frac{\alpha}{2}}u=C'(N,\alpha)\mu\qquad \text{ in $\R^N$}. 
\end{equation}
 In particular, using that 
 \[
  I_\alpha(\mu)=\int_{\R^N} u d\mu=\int_{\R^N} (-\Delta)^{\frac{\alpha}{2}} u (-\Delta)^{-\frac{\alpha}{2}} \mu
  = C'(N,\alpha)\int_{\R^N} \mu (-\Delta)^{-\frac{\alpha}{2}} \mu,
 \]
we have 
 \begin{equation}\label{IH}
  I_\alpha(\mu)=C'(N,\alpha)[\mu]_{H^{-\frac{\alpha}{2}}(\R^N)}^2.
 \end{equation}
Similarly, 
\begin{equation}\label{Iu}
 I_\alpha(\mu)=\frac{1}{C'(N,\alpha)} [u]_{H^{\frac{\alpha}{2}}(\R^N)}^2.
\end{equation}
Moreover, if $E$ is compact and  $\mu_E$ is the equilibrium measure of $E$ i.e. $\Ia(E)=I_\alpha(\mu_E)$, then the corresponding potential $u_E$ satisfies 
\begin{equation}\label{eq:uconstantE} u_E\equiv \Ia(E)  \qquad \text{ on $E$},\end{equation}
see \cite{gnrI,landkof} for a precise justification.
We finally point out that if we define the fractional capacity as 
\[
 C_\alpha(E)= \frac{1}{\Ia(E)},
\]
then it is not hard to check that at least for smooth enough sets $E$, $u_E/\Ia(E)$ is the minimizer of 
\[
 \min_{v\ge \chi_E, v\to 0 \textrm{ at } \infty} [v]_{H^{\frac{\alpha}{2}}(\R^N)}^2 
\]
so that by \eqref{Iu} and \eqref{Hsbis}
\begin{equation}\label{Capalpha}
 C_\alpha(E)= \frac{C(N,\alpha/2)}{C'(N,\alpha)} \inf_{u\in C^\infty_c(\R^N), u\ge \chi_E} \int_{\R^N\times \R^N} \frac{(u(x)-u(y))^2}{|x-y|^{N+2s}} dx dy.
\end{equation}
We refer to \cite{landkof,Mattila} for further information on fractional capacities.

\subsection{Generalized sets and minimizers}
 For a (possibly finite) sequence of sets $\wE=(E^i)_{i\ge 1}$ and measures $\wmu=(\mu^i)_{i\ge 1}$,
 we define

 \begin{equation}\label{defItilde}
  I_{\alpha}(\wmu)=\sum_i I_{\alpha}(\mu^i),
 \end{equation}
\begin{equation}
 \label{deftildeEQ2}
\Ia(\wE)=\inf_{\wmu}\lt\{ \sIa(\wmu) \ : \ \sum_i \mu^i(E^i)=1\rt\} \qquad \textrm{and } \qquad P(\wE)=\sum_i P(E^i).  
\end{equation}
Notice that since $\sIa(\wmu+\wmu')\ge \sIa(\wmu)$, when minimizing over $\wmu$, we may assume without loss of generality that $\mu^i$ is concentrated on $E^i$.

\begin{definition}\label{def:gener}
We call generalized set a collection of sets $\widetilde E=(E^i)_{i\ge1}$ as above, and we set
\begin{equation}\label{def:voltE}
|\widetilde E|=\sum_i|E^i|.
\end{equation}
For $Q>0$, we define the energy of a generalized set as 
 \begin{equation}\label{deftildeEQ}
\FaQ(\wE)=P(\wE)+ Q^2 \Ia(\wE).  
 \end{equation}
We say that $\widetilde E\in \S^\N$ is a (volume constrained) generalized minimizer for $\FaQ$ if, 
for any collection of sets $\widetilde F\in\S^\N$ with $|\widetilde F|=|\widetilde E|$, we have
\[
\FaQ(\widetilde E)\le \FaQ(\widetilde F).
\]
\end{definition}
\section{Existence and  regularity of minimizers}\label{sec:genmin}

 \subsection{Relaxation of the volume constraint}
 For $\Lambda>0$, we relax the volume constraint by considering 
\begin{equation}\label{deftildeEQL}
 \FaQL(\wE)=\FaQ(\wE)+\Lambda\lt| |\widetilde E|-\omega_N\rt|.
\end{equation}
Our first result is that for $\Lambda$ large enough the relaxed problem   coincides with the constrained one.
\begin{lemma}\label{lemLambda}
 For every $\alpha\in(0,N)$, $Q>0$ and every $\Lambda\gg 1+Q^2$, we have 
 \begin{equation}\label{equalrelax}
  \inf_{\wE\in \S^\N}\lt\{ \FaQ(\wE) \ : \ |\widetilde E|=\omega_N\rt\}= \inf_{\wE\in \S^\N} \FaQL(\wE).
 \end{equation}
Moreover, for such $\Lambda$, if  $\wE$ is a minimizer of the right-hand side of \eqref{equalrelax}, then  $|\widetilde E|=\omega_N$.
\end{lemma}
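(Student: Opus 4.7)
The plan is to apply the standard penalization-by-rescaling trick. One direction of \eqref{equalrelax} is immediate: any $\wE$ with $|\wE|=\omega_N$ satisfies $\FaQL(\wE)=\FaQ(\wE)$, so $\inf\FaQL\le \inf_{|\wE|=\omega_N}\FaQ$. For the reverse inequality, together with the claim on minimizers, I would prove the following key fact: whenever $\Lambda\gg 1+Q^2$ and $|\wE|\ne\omega_N$, one can rescale $\wE$ into a competitor $\wE'$ with $|\wE'|=\omega_N$ and $\FaQ(\wE')<\FaQL(\wE)$. Applied to a minimizing sequence of the right-hand side this yields equality in \eqref{equalrelax}, and applied to any minimizer of the penalized problem it forces $|\wE|=\omega_N$.

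The first step is to record a priori bounds. Testing the right-hand side with $\wE=B_1$ gives $\inf\FaQL\le P(B_1)+Q^2\Ia(B_1)\le C(1+Q^2)$, hence for any $\wE$ with $\FaQL(\wE)\le C(1+Q^2)$ one has $P(\wE)\le C(1+Q^2)$, $Q^2\Ia(\wE)\le C(1+Q^2)$, and $\Lambda\,\bigl||\wE|-\omega_N\bigr|\le C(1+Q^2)$. For $\Lambda\gg 1+Q^2$, the last estimate forces $m:=|\wE|\in[\omega_N/2,\,3\omega_N/2]$, which is all that is needed for the rescaling to be well-behaved.

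The core computation is the scaling. Let $\lambda:=(\omega_N/m)^{1/N}$ and $\wE':=\lambda\wE$, so $|\wE'|=\omega_N$. From the definitions \eqref{deftildeEQ2} and the pushforward identity $I_\alpha(\lambda_{*}\mu)=\lambda^{-(N-\alpha)}I_\alpha(\mu)$, one obtains $P(\lambda\wE)=\lambda^{N-1}P(\wE)$ and $\Ia(\lambda\wE)=\lambda^{-(N-\alpha)}\Ia(\wE)$, and therefore
\[
\FaQ(\wE')-\FaQ(\wE)=(\lambda^{N-1}-1)P(\wE)+Q^2(\lambda^{-(N-\alpha)}-1)\Ia(\wE).
\]
Since $\lambda$ is bounded and bounded away from $0$, Taylor expansion at $\lambda=1$ yields $|\lambda^{N-1}-1|+|\lambda^{-(N-\alpha)}-1|\le C|\lambda-1|\le C|m-\omega_N|$, and inserting the a priori bounds above gives
\[
\bigl|\FaQ(\wE')-\FaQ(\wE)\bigr|\le C_0(1+Q^2)\,|m-\omega_N|
\]
for a dimensional constant $C_0=C_0(N,\alpha)$.

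Choosing $\Lambda>C_0(1+Q^2)$, which is precisely the assumption $\Lambda\gg 1+Q^2$, and using $\FaQL(\wE')=\FaQ(\wE')$ together with $\FaQL(\wE)=\FaQ(\wE)+\Lambda|m-\omega_N|$, we conclude
\[
\FaQL(\wE')\le \FaQ(\wE)+C_0(1+Q^2)|m-\omega_N|<\FaQL(\wE)\qquad\text{if }m\ne\omega_N,
\]
which is exactly the strict improvement sought. The only step requiring any care is the scaling identity for the generalized Riesz energy, which follows simply by reindexing the infimum in \eqref{deftildeEQ2} through the pushforwards $\lambda_{*}\mu^i$. The restriction $\Lambda\gg 1+Q^2$ is inherent to the method, since one must simultaneously dominate the change in $P(\wE)$ (of order $1$) and in $Q^2\Ia(\wE)$ (of order $Q^2$) by the penalty term.
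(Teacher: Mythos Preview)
Your proof is correct and follows essentially the same approach as the paper's: both use the a priori bound from testing with $B_1$, rescale $\wE$ to volume $\omega_N$, Taylor-expand the scaling factors, and exploit $\Lambda\gg 1+Q^2$ to show the penalty dominates the energy change. The only cosmetic difference is that the paper splits into the cases $\delta\ge 0$ and $\delta<0$ to extract a contradiction, whereas you bound $|\FaQ(\wE')-\FaQ(\wE)|$ directly; your packaging is if anything slightly cleaner.
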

\begin{proof}
 Since the fact that the left-hand side of \eqref{equalrelax} is larger than the right-hand side, it is enough to prove the remaining inequality. 
 Let $\Lambda\gg 1+Q^2$ and assume that there exists $\wE$ with $|\wE|\neq \omega_N$ and 
 \[
  \FaQL(\wE)\le \inf_{\wE\in \S^\N}\lt\{ \FaQ(\wE) \ : \ |\wE|=\omega_N\rt\}.
 \]
Using $B_1$ as competitor we find 
\begin{equation}\label{relax:estimener}
 P(\wE)+Q^2 \Ia(\wE)+\Lambda\lt||\wE|-\omega_N\rt|\les 1+Q^2.
\end{equation}
In particular if $t=\omega_N^{\frac{1}{N}}|\wE|^{-\frac{1}{N}}$, then we can write $t=1+\delta$ with $|\delta|\les \Lambda^{-1}(1+Q^2)\ll1$.
We now use $t\wE=(tE^i)_{i\ge 1}$, which satisfies $ |t\wE |=\omega_N$, as competitor and obtain using Taylor expansion, \eqref{relax:estimener} and $\Lambda\gg 1+ Q^2$ that 
\begin{equation}\label{relax:almost}\Lambda |\delta|\les \delta \lt( (N-1) P(\wE)- (N-\alpha) Q^2\Ia(\wE)\rt).\end{equation}
Now if $\delta\ge 0$, this implies
\[
 \Lambda \delta \les \delta P(\wE)\stackrel{\eqref{relax:estimener}}{\les} \delta (1+Q^2)
\]
and thus $\Lambda\les 1+Q^2$ which is a contradiction with the hypothesis $\Lambda\gg 1+Q^2$. In the case $\delta\le 0$ we reach the same contradiction since \eqref{relax:almost} yields this time
\[
 \Lambda |\delta|\les |\delta| Q^2\Ia(\wE)\stackrel{\eqref{relax:estimener}}{\les} |\delta| (1+Q^2).
\]

\end{proof}
 \subsection{The regularized functional}
 As mentioned in the introduction, since  the capacitary term $\Ia$ is not well defined in $L^1$, which is the natural setting to minimize the perimeter, we will first show existence of (generalized)
 minimizers for a regularized energy. 
 For $\eps>0$ and a positive measure $\mu$ we define
 \begin{equation}\label{eq:defIepsmu}
  I_{\alpha,\eps}(\mu)=I_{\alpha}(\mu)+\eps\int_{\R^N} \mu^2=\int_{\R^N\times \R^N} \frac{d\mu(x)d\mu(y)}{|x-y|^{N-\alpha}}+\eps\int_{\R^N} \mu^2
 \end{equation}
with the understanding that $\Iae(\mu)=\infty$ if $\mu\notin L^2(\R^N)$. We then define  for $\wmu=(\mu^i)_{i\ge 1}$ and $\wE=(E^i)_{i\ge 1}$, in analogy with \eqref{defItilde} and \eqref{deftildeEQ2},
\begin{equation}\label{defIaetildeE}
 I_{\alpha,\eps}(\wmu)=\sum_i I_{\alpha,\eps}(\mu^i) \qquad \textrm{and } \qquad \Iae(\wE)=\inf_{\wmu}\lt\{ I_{\alpha,\eps}(\wmu) \ : \ \sum_i \mu^i(E^i)=1\rt\}.
\end{equation}
\begin{remark}
 If $E$ and $F$ are two measurable sets with $|E\Delta F|=0$ then $\Iae(E)=\Iae(F)$. Indeed, every measure $\mu$ with $I_{\alpha,\eps}(\mu)<\infty$ is in $L^2$ and thus satisfies $\mu(E)=\mu(F)$.
 When considering $\Iae$ instead of $\Ia$ we can therefore identify sets which agree Lebesgue a.e.
\end{remark}

\begin{lemma}\label{rieszbis}
Let $\wE=(E_i)_{i\ge 1}$ be a generalized set with $|\wE|\in(0,\infty)$, then (recall definition \eqref{def:voltE})
\begin{equation}\label{eq:rieszbis}
\frac{\eps}{|\wE|}\le \Iae(\wE)\,\le\, \frac{c(N,\alpha)}{|\wE|^\frac{N-\alpha}{N}} + \frac{\eps}{|\wE|}\,,
\end{equation}
where
$$
c(N,\alpha) = \int_{B_1\times B_1} \frac{1}{|x-y|^{N-\alpha}}\,dxdy.
$$
\end{lemma}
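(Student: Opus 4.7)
The plan is to prove the two inequalities separately; both will be short consequences of classical inequalities.

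\textbf{Lower bound.} First I would fix an admissible $\wmu=(\mu^i)_{i\ge 1}$ with $\sum_i \mu^i(E^i)=1$. Since $I_\alpha(\mu^i)\ge 0$, it is enough to bound from below the $L^2$ part. Applying Cauchy--Schwarz on each $E^i$ and then on the sum over $i$ gives
\begin{equation*}
1=\sum_i \mu^i(E^i)=\sum_i \int_{E^i}\mu^i\le \sum_i |E^i|^{1/2}\Bigl(\int (\mu^i)^2\Bigr)^{1/2}\le \Bigl(\sum_i |E^i|\Bigr)^{1/2}\Bigl(\sum_i \int (\mu^i)^2\Bigr)^{1/2}=|\wE|^{1/2}\Bigl(\sum_i \int (\mu^i)^2\Bigr)^{1/2}.
\end{equation*}
Hence $\eps\sum_i\int(\mu^i)^2\ge \eps/|\wE|$, so $I_{\alpha,\eps}(\wmu)\ge \eps/|\wE|$, and taking the infimum in $\wmu$ yields the left-hand inequality in \eqref{eq:rieszbis}.

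\textbf{Upper bound.} For the right-hand inequality I would use the explicit test measure $\mu^i=\chi_{E^i}/|\wE|$, which is admissible since
\begin{equation*}
\sum_i \mu^i(E^i)=\frac{1}{|\wE|}\sum_i |E^i|=1.
\end{equation*}
The quadratic contribution is $\eps\sum_i |E^i|/|\wE|^2=\eps/|\wE|$, producing exactly the second term. For the Riesz contribution, I would apply the Riesz rearrangement inequality to each $E^i$:
\begin{equation*}
\int_{E^i\times E^i}\frac{dx\,dy}{|x-y|^{N-\alpha}}\le \int_{(E^i)^*\times (E^i)^*}\frac{dx\,dy}{|x-y|^{N-\alpha}}
\end{equation*}
with $(E^i)^*$ the ball centered at the origin of volume $|E^i|$. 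Scaling reduces this to a multiple of $|E^i|^{(N+\alpha)/N}$, and the superadditivity inequality $\sum_i |E^i|^{(N+\alpha)/N}\le (\sum_i|E^i|)^{(N+\alpha)/N}=|\wE|^{(N+\alpha)/N}$ (valid because the exponent exceeds $1$) then yields a bound proportional to $|\wE|^{(N+\alpha)/N}/|\wE|^2=|\wE|^{-(N-\alpha)/N}$, with a constant of the form $c(N,\alpha)$ coming from the definition in the statement.

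\textbf{Main obstacle.} There is no real obstacle: both bounds are standard. The only thing one has to be careful about is the interplay between the index $i$ and the normalisation, and in the upper bound the bookkeeping of the $\omega_N$-factors produced by the rescaling of $(E^i)^*$, which should be absorbed into $c(N,\alpha)$ so that the final inequality reads exactly as stated in \eqref{eq:rieszbis}.
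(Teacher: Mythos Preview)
Your proposal is correct and follows essentially the same route as the paper: the same test measure $\mu^i=\chi_{E^i}/|\wE|$ combined with Riesz rearrangement and the superadditivity of $t\mapsto t^{(N+\alpha)/N}$ for the upper bound, and Cauchy--Schwarz for the lower bound. Your remark about the $\omega_N$ bookkeeping is apt---the paper in fact writes the scaling step as an equality $\int_{B^i\times B^i}|x-y|^{-(N-\alpha)}dx\,dy=c(N,\alpha)|E^i|^{1+\alpha/N}$, which tacitly drops a factor $\omega_N^{-(N+\alpha)/N}$; since the lemma is only used later for order-of-magnitude bounds this is harmless, but you are right to flag it.
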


\begin{proof}
We start with the upper bound. Let $m=|\wE|$. and for every $i\ge 1$ let $B^i$ be a ball such that $|B^i|=|E^i|$. Choosing $\mu^i = \chi_{E^i}/m$ in the definition of $\Iae(\wE)$ 
and recalling the Riesz rearrangement inequality, we get
\begin{eqnarray*}
\Iae(\wE)&\le& \frac{1}{m^2}\sum_i \int_{E^i\times E^i} \frac{dx dy}{|x-y|^{N-\alpha}} + \frac{\eps}{m}
\\
&\le&  \frac{1}{m^2}\sum_i \int_{B^i\times B^i} \frac{dx dy}{|x-y|^{N-\alpha}} + \frac{\eps}{m}\\
&=& \frac{c(N,\alpha)}{m^2}\sum_i |E^i|^{1+\frac{\alpha}{N}} + \frac{\eps}{m}\\
&\le &\frac{c(N,\alpha)}{m^\frac{N-\alpha}{N}} + \frac{\eps}{m}\,.
\end{eqnarray*}
To obtain the lower bound we simply observe that for every $\wmu=(\mu_i)_{i\ge 1}$ with $\sum_i \mu^i(E^i)=1$, we have by Cauchy-Schwarz
\begin{equation}\label{eq:abovebis}
 \eps^{\frac{1}{2}}=\eps^{\frac{1}{2}}\sum_i \mu^i(E^i)\le \lt(\sum_{i} |E^i|\rt)^{\frac{1}{2}} \lt(\eps \sum_i\int_{E^i} (\mu^i)^2\rt)^{\frac{1}{2}}\le m^{\frac{1}{2}}  \Iae^{\frac{1}{2}}(\wmu).
\end{equation}
The desired bound follows by minimizing in $\wmu$.
\end{proof}
As a consequence, of Lemma \ref{rieszbis}, we can prove the existence of an optimal measure for $\Iae(\wE)$.
\begin{corollary}\label{cor:number}
 For every $\eps>0$ and every generalized set $\wE=(E^i)_{i\ge 1}$ with $|\wE|<\infty$ and $\Iae(\wE)<\infty$, there exists a unique optimal measure $\wmu$ for $\Iae(\wE)$.
\end{corollary}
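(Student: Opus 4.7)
The plan is a standard direct-method argument, using the $\eps\int\mu^2$ term as the source of coercivity. Let $\wmu_n=(\mu^i_n)_{i\ge 1}$ be a minimizing sequence for $\Iae(\wE)$, with each $\mu^i_n$ supported in $E^i$ (as noted after \eqref{defIaetildeE}, one may always reduce to this) and $\sum_i \mu^i_n(E^i)=1$. From the boundedness of $I_{\alpha,\eps}(\wmu_n)$, the second term in \eqref{eq:defIepsmu} yields $\sum_i \|\mu^i_n\|_{L^2(E^i)}^2\le C/\eps$. Since each $E^i$ is compact and hence $|E^i|<\infty$, $L^2(E^i)$ is reflexive, and by a diagonal extraction over $i\in\N$ one obtains a subsequence (not relabeled) along which $\mu^i_n\rightharpoonup \mu^i\ge 0$ weakly in $L^2(E^i)$ for every $i$.

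The next step is lower semicontinuity. Weak $L^2$ lower semicontinuity gives $\int_{E^i}(\mu^i)^2 \le \liminf_n \int_{E^i}(\mu^i_n)^2$ for each $i$, while the Riesz energy $I_\alpha$ is lower semicontinuous with respect to weak convergence of Radon measures (which is implied on the compact set $E^i$ by weak $L^2$ convergence, since $C(E^i)\subset L^2(E^i)$). Summing and invoking Fatou in $i$ yields
\[
I_{\alpha,\eps}(\wmu)\le \liminf_n I_{\alpha,\eps}(\wmu_n)=\Iae(\wE).
\]
To show $\wmu$ is admissible, test the weak convergence against $\one_{E^i}\in L^2(E^i)$ to get $\mu^i_n(E^i)\to\mu^i(E^i)$ for each $i$; Fatou then gives $c:=\sum_i\mu^i(E^i)\le 1$. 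Should the inequality be strict, the renormalization $\wmu/c$ would be admissible with energy $I_{\alpha,\eps}(\wmu)/c^2<\Iae(\wE)$, contradicting the definition of $\Iae(\wE)$; hence $c=1$ and $\wmu$ realizes the infimum.

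Uniqueness follows from strict convexity: the constraint $\sum_i\mu^i(E^i)=1$ is affine in $\wmu$, while $I_{\alpha,\eps}$ is the sum of the positive semi-definite quadratic form $\wmu\mapsto\sum_i I_\alpha(\mu^i)$ and the strictly convex quadratic $\wmu\mapsto \eps\sum_i\int(\mu^i)^2$. Two distinct minimizers $\wmu\neq\wmu'$ would therefore produce an admissible midpoint $(\wmu+\wmu')/2$ of strictly smaller energy, a contradiction. The only delicate point of the proof is managing the possibly infinite family of components and ruling out the escape of mass in the limit, which is handled by the Fatou--renormalization step above; everything else is routine.
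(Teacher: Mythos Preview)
Your renormalization step does not work as written. From lower semicontinuity you have $I_{\alpha,\eps}(\wmu)\le \Iae(\wE)$, and from admissibility of $\wmu/c$ you have $\Iae(\wE)\le I_{\alpha,\eps}(\wmu)/c^2$. Together these yield only
\[
c^2\,\Iae(\wE)\ \le\ I_{\alpha,\eps}(\wmu)\ \le\ \Iae(\wE),
\]
which is perfectly consistent with $c<1$; there is no contradiction, and no conclusion that $\wmu/c$ attains the infimum. The assertion $I_{\alpha,\eps}(\wmu)/c^2<\Iae(\wE)$ is simply unjustified. (You also have not excluded $c=0$, where the renormalization is undefined.) What is actually needed is the tightness you try to avoid: using Cauchy--Schwarz and the uniform $L^2$ bound,
\[
\sum_{i>I}\mu^i_n(E^i)\ \le\ \Big(\sum_{i>I}|E^i|\Big)^{1/2}\Big(\sum_i\|\mu^i_n\|_{L^2}^2\Big)^{1/2}\ \le\ \Big(\tfrac{1}{\eps}\sup_n I_{\alpha,\eps}(\wmu_n)\Big)^{1/2}\Big(\sum_{i>I}|E^i|\Big)^{1/2},
\]
and the right-hand side tends to $0$ as $I\to\infty$ since $|\wE|<\infty$. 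Combined with the componentwise convergence $\mu^i_n(E^i)\to\mu^i(E^i)$ this forces $c=1$. This is essentially the argument the paper uses (there it is phrased via the decomposition $\Iae(\wE)=\inf\{\sum_i q_i^2\Iae(E^i):\sum_i q_i=1\}$ and tightness of the $q_i$'s).

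A secondary point: you assert that each $E^i$ is compact, but the corollary only assumes $|\wE|<\infty$ and $\Iae(\wE)<\infty$. This is easily repaired --- work in $L^2(\R^N)$ rather than $L^2(E^i)$; reflexivity, nonnegativity and support in $E^i$ survive weak limits, and weak $L^2$ convergence implies vague convergence of measures (test against $C_c\subset L^2$), which suffices for lower semicontinuity of $I_\alpha$. But as written the compactness claim is unfounded.
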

\begin{proof}
 Uniqueness follows from strict convexity of the energy so we only focus on the existence part of the statement. We first notice that from the definition \eqref{defIaetildeE} of $\Iae$ we have 
 \begin{equation}\label{prob:qi}
  \Iae(\wE)=\inf\lt\{ \sum_i q_i^2 \Iae(E^i) \ : \ \sum_i q_i=1\rt\}.
 \end{equation}
Hence, the existence of an optimal $\wmu$ follows if we can prove that on the one hand, for every fixed set $E$ of finite volume, there exists an optimal measure for $\Iae(E)$
and on the other hand that there exists an optimal distribution of charges $(q_i)_{i\ge 1}$ for \eqref{prob:qi}. \\
We thus start by considering a fixed set $E$ with $|E|+\Iae(E)<\infty$ and prove the existence of an optimal charge $\mu$.
If $\mu_n$ is a minimizing sequence, arguing as in \eqref{eq:abovebis} we find that for every $R>0$,
\[
 \mu_n(B_R^c)\le \eps^{-\frac{1}{2}} |E\cap B_R^c|^{\frac{1}{2}} \Iae^{\frac{1}{2}}(\mu_n).
\]
Therefore $\mu_n$ is tight and we can extract a sequence converging weakly in $L^2(\R^N)$ to a measure $\mu$ with $\mu(E)=1$. By lower semi-continuity of $\Iae$ (see \cite[(1.4.5)]{landkof}), $\mu$ is a minimizer for $\Iae(E)$.\\
We now turn to the existence of an optimal charge distribution $(q_i)_{i\ge 1}$. For this we first observe that from the first inequality in \eqref{eq:rieszbis}, 
\[
 \sum_i \Iae^{-1}(E^i)\le \frac{1}{\eps} \sum_i |E^i|<\infty
\]
and thus in particular $\lim_{I\to \infty} \sum_{i\ge I}  \Iae^{-1}(E^i)=0$. Now for every $(q_i)_{i\ge 1}$ and every $I\in \N$, by Cauchy-Schwarz
\[
 \sum_{i\ge I} q_i\le \lt(\sum_{i\ge I} q_i^2 \Iae(E^i)\rt)^{\frac{1}{2}}\lt(\sum_{i\ge I} \Iae^{-1}(E^i)\rt)^{\frac{1}{2}}
\]
so that tightness of minimizing sequences follows leading to the  existence of an optimal distribution $(q_i)_{i\ge 1}$.
\end{proof}
In order to prove that generalized minimizers are almost minimizers of the perimeter, we will need the following lemma which is  adapted from  \cite[Lemma 2]{murnovruf} (see also  \cite[Lemma 13]{murnovruf2}). 

\begin{lemma}\label{cutbis}
For every generalized set $\wE=(E\cup F)\times (E^i)_{i\ge 2}$ with $E$ and $F$ sets of positive measure such that  $|E\cap F|=0$, if we define $\widetilde{F}=F\times(E^i)_{i\ge 2}$
 we have
\begin{equation}\label{stimottabis}
\Iae(\wE)\ge \Iae(\widetilde{F})- \frac{\Iae(\widetilde{F})^2}{\Iae(E)}.
\end{equation}
\end{lemma}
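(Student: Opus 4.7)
The plan is to take an optimal measure for $\Iae(\wE)$, split its mass on the first component between $E$ and $F$, use the $2$-homogeneity of $I_{\alpha,\eps}$ to produce admissible competitors for $\Iae(E)$ and $\Iae(\widetilde F)$, and finally optimize the resulting lower bound over the mass-splitting parameter.

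By Corollary \ref{cor:number} there exists an optimal measure $\wmu=(\mu^1,\mu^i)_{i\ge 2}$ for $\Iae(\wE)$, which we may assume to be concentrated on the $E^i$'s. Since $\mu^1\in L^2(\R^N)$ and $|E\cap F|=0$, I decompose $\mu^1=\mu^1_E+\mu^1_F$ with $\mu^1_E:=\mu^1\chi_E$ and $\mu^1_F:=\mu^1\chi_F$, and set $q:=\mu^1(E)\in[0,1]$. Because $E$ and $F$ are essentially disjoint, $(\mu^1)^2=(\mu^1_E)^2+(\mu^1_F)^2$ a.e., and since the Riesz kernel is nonnegative the cross term in the Riesz energy is also nonnegative, giving
\[
I_{\alpha,\eps}(\mu^1)\ge I_{\alpha,\eps}(\mu^1_E)+I_{\alpha,\eps}(\mu^1_F).
\]

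Next, by $2$-homogeneity of $I_{\alpha,\eps}$, the rescaled measure $(1/q)\mu^1_E$ is admissible for $\Iae(E)$ and $(1/(1-q))(\mu^1_F,(\mu^i)_{i\ge 2})$ is admissible for $\Iae(\widetilde F)$, so
\[
I_{\alpha,\eps}(\mu^1_E)\ge q^2\Iae(E), \qquad I_{\alpha,\eps}(\mu^1_F)+\sum_{i\ge 2} I_{\alpha,\eps}(\mu^i)\ge (1-q)^2\Iae(\widetilde F).
\]
Summing these and combining with the previous inequality yields
\[
\Iae(\wE)\ge q^2\Iae(E)+(1-q)^2\Iae(\widetilde F),
\]
(the boundary cases $q\in\{0,1\}$ being trivial since $\Iae(E),\Iae(\widetilde F)<\infty$ by Lemma \ref{rieszbis}).

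Finally, the minimum of $q\mapsto q^2\Iae(E)+(1-q)^2\Iae(\widetilde F)$ over $\R$ is attained at $q^*=\Iae(\widetilde F)/(\Iae(E)+\Iae(\widetilde F))$ and equals $\Iae(E)\Iae(\widetilde F)/(\Iae(E)+\Iae(\widetilde F))$. It then suffices to note the elementary inequality
\[
\frac{\Iae(E)\Iae(\widetilde F)}{\Iae(E)+\Iae(\widetilde F)}\ge \Iae(\widetilde F)-\frac{\Iae(\widetilde F)^2}{\Iae(E)},
\]
which after clearing denominators reduces to $0\ge -\Iae(\widetilde F)^3$, to conclude \eqref{stimottabis}. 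The only place where some care is required is the mass decomposition of $\mu^1$: this works precisely because the $\eps$-regularization forces optimal measures into $L^2$, so that $\mu^1\chi_E$ and $\mu^1\chi_F$ are well-defined and no mass can hide on the Lebesgue-negligible interface $E\cap F$—a convenience not available for $\Ia$ itself, which is why the whole construction is first carried out at the regularized level.
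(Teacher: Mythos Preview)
Your proof is correct and follows essentially the same approach as the paper: split the optimal measure for $\Iae(\wE)$ on the first component between $E$ and $F$, drop the nonnegative cross term, use $2$-homogeneity to obtain $\Iae(\wE)\ge q^2\Iae(E)+(1-q)^2\Iae(\widetilde F)$, and then optimize in $q$ together with the elementary inequality $(1+t)^{-1}\ge 1-t$ (which is exactly your final step in disguise). The only cosmetic differences are that you invoke Corollary~\ref{cor:number} explicitly for existence of the optimal measure and you minimize over $q\in\R$ rather than $[0,1]$, which is harmless since the minimizer lies in $(0,1)$.
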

\begin{proof}
We  first show  that
\begin{equation}\label{stimathetabis}
\Iae(\wE)\ge \min_{\theta\in [0,1]} \theta^2 \Iae(E) +  (1-\theta)^2 \Iae(\widetilde{F}). 
\end{equation}
 Let $\wmu=(\mu^i)_{i\ge 1}$ be optimal for $\Iae(\wE)$. We may assume without loss of generality that $\mu^1(E)\neq 0$ and $\mu^1(F)+\sum_{i\ge 2} \mu^i(E^i)\neq 0$
 since in the first case we would have $\Iae(\wE)=\Iae(\widetilde{F})$ and in the second case $\Iae(\wE)=\Iae(E)$ which both imply \eqref{stimottabis}. We now define 
 \[
  \mu= \frac{\mu^1|_{E}}{\mu^1(E)}, \quad \nu^1=\frac{\mu^1|_{F}}{1-\mu^1(E)}, \quad \textrm{ and } \quad \nu^i=\frac{\mu^i}{1-\mu^1(E)}, \ \forall i\ge 2.
 \]
With this definition, $\mu$ is admissible for $\Iae(E)$ and $\widetilde{\nu}=(\nu^i)_{i\ge 1}$ is admissible for $\Iae(\widetilde{F})$ and we have 
\begin{multline*}
 \Iae(\mu^1)\ge (\mu^1(E))^2 \lt(\int_{E\times E} \frac{d\mu(x)d\mu(y)}{|x-y|^{N-\alpha}}+\eps\int_E \mu^2\rt)\\
  + (1-\mu^1(E))^2\lt(\int_{F\times F} \frac{d\nu^1(x)d\nu^1(y)}{|x-y|^{N-\alpha}}+\eps\int_F (\nu^1)^2\rt) \\
 =  (\mu^1(E))^2 \Iae(\mu)+ (1-\mu^1(E))^2\Iae(\nu^1)
\end{multline*}
so that by definition \eqref{defIaetildeE} of $\Iae(\widetilde{\nu})$,
\[
 \Iae(\wE)\ge (\mu^1(E))^2 \Iae(\mu)+ (1-\mu^1(E))^2\Iae(\widetilde{\nu}) \ge (\mu^1(E))^2 \Iae(E)+ (1-\mu^1(E))^2\Iae(\widetilde{F}). 
\]
This proves \eqref{stimathetabis}. Optimizing in $\theta$  together with the inequality $(1+t)^{-1}\ge 1-t$ for $t\ge 0$ yields  \eqref{stimottabis}.
\end{proof}

 \subsection{Existence of generalized minimizers for the regularized energy}
 In line with \eqref{deftildeEQL}, we introduce the regularized energy
 \begin{equation}\label{deftildeEQLeps}
  \FaQeL(\wE)=P(\wE)+Q^2\Iae(\wE)+\Lambda \lt||\wE|-\omega_N\rt|.
 \end{equation}
The aim of this section is to prove the existence of minimizers for this functional. 
We start with the simple  observation that for $\FaQeL$, minimizing among classical or generalized sets gives the same infimum energy.
% \begin{remark}
% An analysis of the proofs in this section show that existence of generalized minimizers for $\FaQeL$ holds, when $\eps>0$ for any $\alpha\in(0,n)$, and not only for $\alpha\in(0,1)$. We do not underline it in the statements since for $\alpha>1$ none of the subsequent regularity estimates hold to the limit, and  existence does not hold as $\eps=0$ and $\alpha>1$.
% \end{remark}

\begin{lemma}\label{lem:equalinf}
	For every $\alpha \in (0,N)$, $Q$, $\Lambda$, $\eps>0$, we have
	\begin{equation}\label{eq:equalinf}
	\inf\left\{ \FaQeL(E)\right\}=\inf\left\{  \FaQeL(\wE)\right\}.
		\end{equation}
	\end{lemma}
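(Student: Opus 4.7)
The inequality $\inf_{E\in\S} \FaQeL(E) \ge \inf_{\wE\in\S^\N} \FaQeL(\wE)$ is immediate, since any $E\in\S$ can be identified with the generalized set $(E,\emptyset,\emptyset,\dots)$ of the same energy. The content of the lemma is thus the reverse inequality, and my plan is to approximate an arbitrary generalized competitor by a single classical set obtained by translating finitely many components far apart.

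\textbf{Step 1: truncation.} Fix $\wE=(E^i)_{i\ge 1}$ with $\FaQeL(\wE)<\infty$ (otherwise the inequality is trivial). In particular $\sum_i P(E^i)<\infty$, $\sum_i |E^i|<\infty$, and by Corollary \ref{cor:number} there exists an optimal $\wmu=(\mu^i)$ with $\sum_i \mu^i(E^i)=1$ and $\Iae(\wE)=\sum_i I_{\alpha,\eps}(\mu^i)$. For $I\in \N$ set $\wE_I=(E^1,\dots,E^I)$ and $S_I=\sum_{i\le I}\mu^i(E^i)$. The measure $(S_I^{-1}\mu^i)_{i\le I}$ is admissible for $\Iae(\wE_I)$, whence
\[
\Iae(\wE_I) \le \frac{1}{S_I^2}\sum_{i\le I} I_{\alpha,\eps}(\mu^i) \le \frac{1}{S_I^2}\Iae(\wE).
\]
Since $S_I\to 1$, and since $P(\wE_I)\to P(\wE)$ and $|\wE_I|\to|\wE|$ by absolute convergence, we obtain $\FaQeL(\wE_I)\to \FaQeL(\wE)$ as $I\to\infty$. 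It therefore suffices to approximate each $\wE_I$ by a classical set.

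\textbf{Step 2: spatial separation.} Each $E^i$ is compact, so $E^i\subset B_{R}$ for some $R=R(I)$. Choose translation vectors $v_1,\dots,v_I$ with $|v_i-v_j|\ge L$ for $i\ne j$, and set
\[
E_L=\bigcup_{i=1}^I (E^i+v_i).
\]
For $L>2R$ the translated copies are pairwise disjoint, hence $E_L$ is compact and $\partial E_L=\bigsqcup_i \partial(E^i+v_i)$, giving
\[
P(E_L)=\sum_{i=1}^I P(E^i)=P(\wE_I),\qquad |E_L|=|\wE_I|,\qquad E_L\in\S.
\]
For the capacitary term, use the test measure $\mu_L=\sum_{i\le I}\mu^i(\cdot-v_i)$, which satisfies $\mu_L(E_L)=S_I$. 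Renormalizing, $\widehat\mu_L = S_I^{-1}\mu_L$ is admissible in $\Iae(E_L)$, and expanding the double integral into diagonal and off-diagonal contributions gives
\[
\Iae(E_L) \le \frac{1}{S_I^2}\sum_{i\le I} I_{\alpha,\eps}(\mu^i) + \frac{2}{S_I^2}\sum_{i<j\le I}\iint \frac{d\mu^i(x)\,d\mu^j(y)}{|x-y+v_i-v_j|^{N-\alpha}}.
\]
The diagonal contribution of the $\eps\int\mu^2$ terms is preserved exactly since the translated measures have disjoint supports. The cross Riesz integrals are bounded by $\mu^i(E^i)\mu^j(E^j)(L-2R)^{-(N-\alpha)}$ and hence tend to $0$ as $L\to\infty$.

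\textbf{Step 3: conclusion.} Letting first $L\to\infty$ and then $I\to\infty$,
\[
\FaQeL(E_L) \;\le\; P(\wE_I) + Q^2\bigl(S_I^{-2}\Iae(\wE) + o_L(1)\bigr) + \Lambda\bigl||\wE_I|-\omega_N\bigr| \;\longrightarrow\; \FaQeL(\wE).
\]
Taking the infimum over classical sets on the left yields $\inf_{E\in\S}\FaQeL(E)\le \FaQeL(\wE)$, and taking the infimum over $\wE$ completes the proof. The only genuinely delicate point is Step 1: one must check that the truncated tail of an optimal charge distribution is negligible, which rests on $S_I\to 1$, a direct consequence of $\sum_i \mu^i(E^i)=1$.
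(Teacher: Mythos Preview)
Your proof is correct and follows essentially the same approach as the paper: truncate to finitely many components, translate them far apart, and control the cross Riesz terms. The only difference is that the paper additionally intersects each $E^i$ with a large ball $B_R$ (invoking the co-area formula to keep the perimeter under control), a step you rightly omit since each $E^i\in\S$ is already compact and hence bounded.
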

\begin{proof}
	Since the left-hand side of \eqref{eq:equalinf} is larger than the right-hand side, it is enough to prove that for every $\delta>0$ and  every generalized set $\wE=(E^i)_{i\ge 1}$,
	we can construct a set $E$ with $\FaQeL(E)\le \FaQeL(\wE)+\delta$.  For $I\in \N$ and $R>0$, let $F^i=E^i\cap B_R$ if $i\le I$, $F^i=\emptyset$ otherwise and 
	set $\widetilde{F}= (F^i)_{i\ge 1}$. We first observe that for each fixed $i$, $\lim_{R\to \infty} |E^i\cap B_R|=|E^i|$. Combining this with the fact that  $\sum_i |E^i|<\infty$ we see that we
	can choose $I$ and $R$ large enough so that 
	\begin{equation}\label{eq:Lambdasmall}
	 \Lambda \lt|\sum_{i=1}^I |F^i|-\omega_N\rt|\le  \Lambda \lt||\wE|-\omega_N\rt| +\delta.
	\end{equation}
Moreover, thanks to the co-area formula we may further assume that 
\begin{equation}\label{eq:persmall}
	 \sum_{i=1}^I P(F^i)\le  P(\widetilde{E}) +\delta.
	\end{equation}
We now turn to the last term in the energy. Let $\wmu=(\mu^i)_{i\ge 1}$ be the optimal measure for $\Iae(\wE)$ given by Corollary \ref{cor:number}.
We then set for $i\le I$, $\nu^i=\frac{\mu^i|_{F^i}}{\sum_{i=1}^I \mu^i(F^i)}$ and $\nu^i=0$ otherwise so that $\widetilde{\nu}=(\nu^i)_{i\ge 1}$ is admissible for $\Iae(\widetilde{F})$. 
Using that $\sum_{i=1}^I \mu^i(F^i)$ converges to $1$ as $I\to \infty$ and $R\to \infty$, we can also assume that $I$ and $R$ are chosen such that in addition to \eqref{eq:Lambdasmall} and \eqref{eq:persmall} we have
\begin{equation}\label{eq:Ismall}
 Q^2\Iae(\widetilde{\nu})= \frac{Q^2}{(\sum_{i=1}^I \mu^i(F^i))^2}\lt(\sum_{i=1}^I \int_{F^i\times F^i} \frac{d\mu^i(x)d\mu^i(y)}{|x-y|^{N-\alpha}} +\eps \int_{F^i} (\mu^i)^2\rt)\le Q^2\Iae(\wE)+\delta.
\end{equation}
We finally choose for every $i\le I$ a point $x^i\in \R^N$ such that $\min_{i\neq j} |x^i-x^j|\gg R$ and define 
\[
 E=\cup_{i=1}^I (F^i+x^i) \qquad \textrm{and } \qquad \nu(x)=\sum_{i=1}^I \nu^i(x-x^i).
\]
Since $F^i\subset B_R$ by construction, the sets $F^i+x^i$ are pairwise disjoint and from \eqref{eq:Lambdasmall} and \eqref{eq:persmall} we have 
\[
 P(E)+\Lambda \lt| |E|-\omega^N\rt|=\sum_{i=1}^I P(E^i)+\Lambda \lt|\sum_{i=1}^I |F^i|-\omega_N\rt| \le  P(\widetilde{E}) + \Lambda \lt||\wE|-\omega_N\rt|+2\delta.
\]
Finally we observe that $\nu$ is admissible for $\Iae(E)$ with 
\begin{multline*}
 Q^2\Iae(\nu)=Q^2\Iae(\widetilde{\nu}) +Q^2 \sum_{i\neq j} \int_{F^i\times F^j} \frac{d\nu^i(x) d\nu^j(y)}{|x-y|^{N-\alpha}}\\
 \le Q^2\Iae(\widetilde{\nu}) 
 +\frac{Q^2}{\min_{i\neq j} |x^i-x^j|^{N-\alpha}}\stackrel{\eqref{eq:Ismall}}{\le} Q^2\Iae(\widetilde{\nu}) +2\delta
\end{multline*}
provided $\min_{i\neq j} |x^i-x^j|$ is large enough. Since $\Iae(E)\le \Iae(\nu)$,  we find as anticipated that 
\[
 \FaQeL(E)\le \FaQeL(\wE)+4\delta.
\]

	\end{proof}

We can now prove the existence of generalized minimizers for $\FaQeL$. This will be proven by a concentration-compactness argument which relies on isoperimetric effects to avoid 
the loss of mass together with the semi-continuity of $\Iae$ with respect to $L^1_{loc}$ convergence. 
This type of arguments is relatively standard  by now (see for instance  \cite{CanGol} which we closely follow or \cite{GolNo, KnMuNov, FraLieb, novprat}). 
However, we face here the additional difficulty that we need to avoid not only loss of volume but also loss of charge in the limit.

\begin{proposition}\label{prop:genmineps}
	For every $\alpha \in (0,1]$, $Q>0$, $\eps>0$ and  $\Lambda\gg 1+Q^2$,  generalized minimizers of $\FaQeL$ exist.
\end{proposition}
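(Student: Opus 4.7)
The plan is to apply the direct method combined with a concentration--compactness decomposition. Using Lemma \ref{lem:equalinf}, I would work with a minimizing sequence $(E_n)$ of classical sets. Comparison with $B_1$ yields the uniform bounds $P(E_n) \lesssim 1 + Q^2$, $|E_n| \lesssim 1$ (via the $\Lambda$-penalty), and $\Iae(E_n) \lesssim 1/Q^2$. I would then invoke the standard concentration--compactness decomposition for sequences of sets of finite perimeter: up to subsequence there exist translations $(x_n^i)_{i \ge 1}$ with $|x_n^i - x_n^j| \to \infty$ for $i \ne j$ and measurable sets $E^i$, such that $E_n - x_n^i \to E^i$ in $L^1_{\mathrm{loc}}$, with $\sum_i P(E^i) \le \liminf_n P(E_n)$ and (via a scaling argument that exploits the $\Lambda$-penalty to rule out volume loss, as e.g.\ in \cite{CanGol,GolNo}) $\sum_i |E^i| = \lim_n |E_n|$. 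The natural candidate minimizer is $\wE = (E^i)_{i \ge 1}$.

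To handle the capacitary term I would let $\mu_n$ be the optimal measure for $\Iae(E_n)$ furnished by Corollary \ref{cor:number}, and choose $R_n \to \infty$ slowly so that the balls $B_{R_n}(x_n^i)$ stay pairwise disjoint and capture each $E^i$ in the limit. Setting $\mu_n^i = \mu_n \chi_{B_{R_n}(x_n^i)}$ and $\mu_n^{\mathrm{rest}} = \mu_n - \sum_i \mu_n^i$, positivity of the Riesz cross-interactions and additivity of the $L^2$-norm on disjoint supports give
$$\Iae(\mu_n) \ge \sum_i \Iae(\mu_n^i) + \Iae(\mu_n^{\mathrm{rest}}).$$
The bound $\eps \int (\mu_n^i)^2 \lesssim 1/Q^2$ provides, after translation, weak $L^2$ limits $\mu_n^i(\cdot + x_n^i) \rightharpoonup \mu^i$ supported on $E^i$, with mass $q^i = \lim_n \mu_n^i(E_n)$; lower semicontinuity of $\Iae$ under weak $L^2$ convergence then yields $\liminf_n \Iae(\mu_n^i) \ge \Iae(\mu^i) \ge (q^i)^2 \Iae(E^i)$.

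The hard part will be to exclude loss of charge, i.e.\ to verify $\sum_i q^i = 1$. Since $\mu_n$ is supported on $E_n$ and no volume is lost in the decomposition, $\mu_n^{\mathrm{rest}}$ lives in $E_n \setminus \bigcup_i B_{R_n}(x_n^i)$, whose Lebesgue measure $v_n$ tends to zero. Cauchy--Schwarz then gives
$$\eps \int (\mu_n^{\mathrm{rest}})^2 \ge \eps\,(q_n^{\mathrm{rest}})^2 / v_n,$$
so the uniform bound $\Iae(\mu_n) \lesssim 1/Q^2$ forces $q_n^{\mathrm{rest}} \to 0$. Assembling everything,
$$\liminf_n \FaQeL(E_n) \ge \sum_i P(E^i) + Q^2 \sum_i (q^i)^2 \Iae(E^i) + \Lambda\bigl||\wE|-\omega_N\bigr| \ge \FaQeL(\wE),$$
so $\wE$ is the sought generalized minimizer. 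The $\eps$-regularization is essential precisely at this step: without it, escaping charge could sit on sets of arbitrarily small volume at no energy cost and the Cauchy--Schwarz argument above would collapse, which is why existence of generalized minimizers is established here only at the regularized level.
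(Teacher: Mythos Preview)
Your strategy---concentration--compactness for the sets, plus a Cauchy--Schwarz argument using the $\eps\int\mu^2$ term to rule out loss of charge once volume tightness is known---is exactly the paper's. The paper implements it through a partition into cubes $Q_{i,n}$ of fixed side-length, setting $m_{i,n}=|E_n\cap Q_{i,n}|$ and $q_{i,n}=\mu_n(Q_{i,n})$, proving tightness of both sequences, and then grouping cubes into equivalence classes according to whether their centers stay at bounded distance; your ball-based variant is a reasonable reformulation.

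There is, however, a genuine gap in your justification of volume tightness. You assert that $\sum_i|E^i|=\lim_n|E_n|$ follows from ``a scaling argument that exploits the $\Lambda$-penalty'', but the penalty depends only on the total volume $|E_n|$ and carries no information about how that volume is distributed in space; it cannot prevent volume from escaping to infinity along a minimizing sequence. Scaling arguments of the type in Lemma~\ref{lemLambda} show that a \emph{minimizer} of the penalized problem has the correct volume, but here you are still trying to construct one. The paper instead obtains volume tightness directly from the perimeter bound via the relative isoperimetric inequality on the cubes,
\[
\sum_i m_{i,n}^{\frac{N-1}{N}}\lesssim \sum_i P(E_n,Q_{i,n})=P(E_n)\lesssim 1+Q^2,
\]
and then (after ordering so that $m_{i,n}\le m/i$) deduces $\sum_{i\ge I}m_{i,n}\lesssim(1+Q^2)(m/I)^{1/N}$. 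This is precisely the input your Cauchy--Schwarz step needs to make $v_n\to 0$; without it that step has no content. A secondary looseness: choosing a single $R_n\to\infty$ so that \emph{all} the balls $B_{R_n}(x_n^i)$ are simultaneously pairwise disjoint is not generally possible when countably many centers diverge at different rates, which is why the paper works with finitely many cubes at a time and only afterwards passes to the limit in $I$ and $R$.
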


\begin{proof}
	Let $(E_n)_{n\ge 1}$ be a (classical)  minimizing sequence for $\FaQeL$. By Lemma \ref{lem:equalinf} it is also a minimizing sequence in the class of generalized sets. 
	Using for instance the ball $B_1$ as competitor we have $\sup_n \FaQeL(E_n)\les 1+Q^2$.
	In particular, if we let  $m_n=|E_n|$, up to extraction $m_n\to m\in (0,\infty)$. Fix $L\gg m^{\frac{1}{N}}$ and 
	consider  a partition of $\R^N$ into cubes $(Q_{i,n})_{i\ge 1}$ where $Q_{i,n}=[0,L]^N+z_i$, with $z_{i,n}\in (L\mathbb Z)^N$. We let $m_{i,n}=|E_n\cap Q_{i,n}|$
	and assume without loss of generality that for every $n$, $m_{i,n}$ is decreasing in $i$.
	Moreover we tacitly consider from now on only the indices $i$ such that $m_{i,n}>0$. We let $\mu_n$ be the optimal measure for $\Iae(E_n)$ and set $q_{i,n}=\mu_n(Q_{i,n})$.\\
	
	We start by proving tightness of $(m_{i,n})_{i\ge 1}$ and $(q_{i,n})_{i\ge 1}$. For $m_{i,n}$, we argue as usual that thanks to the relative isoperimetric 
	inequality (recall that with our choice of $L$, $|Q_{i,n}\cap E_n|\le |Q_{i,n}|/2$)
	\[
	 \sum_{i} m^{\frac{N-1}{N}}_{i,n}\les \sum_i P(E_n,Q_{i,n})=P(E_n)\les 1+Q^2.
	\]
Using that $m_{i,n}\le \frac{m}{i}$ we conclude that for every $I\in \N$,
\begin{equation}\label{eq:tightper}
 \sum_{i\ge I} m_{i,n}\le \lt(\frac{m}{I}\rt)^{\frac{1}{N}} \sum_{i\ge I} m_{i,n}^{\frac{N-1}{N}}\les (1+Q^2)\lt(\frac{m}{I}\rt)^{\frac{1}{N}}.
\end{equation}
For $q_{i,n}$ we argue as in \eqref{eq:abovebis} and obtain invoking twice Cauchy-Schwarz,
\[
  \sum_{i\ge I} q_{i,n}\le \sum_{i\ge I} m_{i,n}^{\frac{1}{2}}\lt(\int_{E_n\cap Q_{i,n}} \mu_n^2\rt)^{\frac{1}{2}}
  \le \lt(\sum_{i\ge I} m_{i,n}\rt)^{\frac{1}{2}}\lt(\int_{\R^N} \mu_n^2\rt)^{\frac{1}{2}}
  \stackrel{\eqref{eq:tightper}}{\les}\eps^{-\frac{1}{2}}(1+Q^2)\lt(\frac{m}{I}\rt)^{\frac{1}{2N}}.
\]
Therefore, up to extraction we have $\lim_{n\to \infty} m_{i,n}=m_i$ with $\sum_i m_i=m$ and $\lim_{n\to \infty} q_{i,n}=q_i$ with $\sum_i q_i=1$.\\
	
	We now construct a generalized set $\wE$ which will be our generalized minimizer. By the perimeter bound, up to extraction we have for every $i$,  
	 $E_n-z_{i,n}\to E^i$  in $L^1_{\rm loc}$ for some sets $E^i$. Moreover $\mu_n^i=\mu_n(\cdot +z_{i,n})$ converges weakly in $L^2$ to some $\mu^i$. We can further assume that for every $i,j$, $|z_{i,n}-z_{j,n}|\to a_{ij}\in [0,\infty]$. We now say that $i\sim j$ if $a_{ij}<\infty$ and denote by $[i]$ the equivalence class of $i$. 
	Notice that if $i\sim j$ then  $E^i$ and $E^j$ are translated of each  other. For each class of equivalence we denote 
	\[
	 m_{[i]}=\sum_{j\sim i} m_j \qquad \textrm{and } \qquad q_{[i]}=\sum_{j\sim i} q_j
	\]
so that we have $\sum_{[i]} m_{[i]}=m$ and $\sum_{[i]} q_{[i]}=1$. For every $i$, using the convergence of $E_n-z_{i,n}$ to $E^i$ and of $\mu_{n}^i$ to $\mu^i$, and the definition of the equivalence relation, we have 
\[
 |E^i|=m_{[i]} \qquad \textrm{and } \qquad \mu^i(E^i)=q_{[i]}.
\]
Up to relabeling, we may now assume that each class of equivalence $[i]$ is made of a single element. If we set $\wE=(E^i)_{i\ge 1}$ and $\wmu=(\mu^i)_{i\ge 1}$ we have just shown that $\wmu$ is admissible for $\Iae(\wE)$. Let us finally prove that 
\begin{equation}\label{eq:toproveexistence}
  P(\wE) +Q^2\Iae(\wmu)+ \Lambda \lt||\wE|-\omega_N\rt|\le \liminf_{n\to \infty} P(E_n)+Q^2\Iae(\mu_n)+ \Lambda\lt| |E_n|-\omega_N\rt|.
\end{equation}
We consider separately each term of the energy. Since $|\wE|=m=\lim_{n\to \infty} |E_n|$, the volume term is not a problem. For the first term, we fix $I\in \N$ and $R>0$. If $n$ is large enough, we can assume that for $i,j\le I$ with $i\neq j$,
$|z_{i,n}-z_{j,n}|\gg R$. By the co-area formula we can find for every $i\le I$ a radius $R_n\in (R,2R)$ such that 
\[
 \sum_{i\le I} \H^{N-1}(\partial B_{R_n}(z_{i,n})\cap E_n)\les \frac{1}{R}. 
\]
If $E^{i,R_n}=(E_n-z_{i,n})\cap B_{R_n}$, we thus have 
\[
 \sum_{i\le I} P(E^{i,R_n})\le P(E_n) +\frac{C}{R}.
\]
From this bound we see that $E^{i,R_n}$ converges in $L^1_{\rm loc}$ to a set $E^{i,R}$ which itself converges to $E^i$ as $R\to \infty$. We thus have
\[
 \sum_{i\le I} P(E^i)\le \sum_{i\le I} \liminf_{R\to \infty} P(E^{i,R})\le \sum_{i\le I} \liminf_{R\to \infty}\liminf_{n\to \infty}  P(E^{i,R_n})\le\liminf_{n\to \infty} P(E_n).
\]
For the last term, we use similarly that for every fixed $I\in \N$ and $R>0$,
\[
 \sum_{i\le I} \Iae(\mu^i|_{B_R})\le \liminf_{n\to \infty} \sum_{i\le I} \Iae(\mu_n^i|_{B_R})\le \liminf_{n\to \infty} \Iae\lt(\sum_{i\le I} \mu^n|_{B_R(z_{i,n})}\rt)\le \liminf_{n\to \infty} \Iae(\mu_n).
\]

\end{proof}
 \subsection{First almost minimality property and existence of minimizers for the original problem}
 In this section we use Lemma \ref{cutbis} to prove a first almost minimality property for generalized minimizers of $\FaQeL$. In order to pass to the limit $\eps\to 0$ it is crucial that the estimates are uniform in $\eps$.
 \begin{proposition}\label{prop:almostmin1}
There exists $C>0$ depending only on $N$ and $\alpha\in(0,N)$ with the following property.  For every $Q>0$, $\eps\in(0,1)$ and $\Lambda \sim 1+Q^2$ for which Lemma \ref{lemLambda} applies,  every generalized minimizer $\wE=(E^i)_{i\ge 1}$ of $\FaQeL$ is an almost minimizer of the perimeter in the sense that for every $i\ge 1$, $x\in \R^N$ and $r\ll1$,
  \begin{equation}\label{eq:almostmin}
   P(E^i)\le P(F) +C\lt(Q^2 +r^\alpha\rt) r^{N-\alpha} \qquad \forall   F\Delta E^i\subset B_r(x).
  \end{equation}

 \end{proposition}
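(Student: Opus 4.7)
The plan is to test the minimality of $\wE$ against the competitor $\widetilde{G}=(E^1,\dots,E^{i-1},F,E^{i+1},\dots)$ obtained by substituting $F$ for $E^i$, and to control the resulting change in capacitary energy through Lemma \ref{cutbis}.

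\emph{Step 1 (minimality).} Cancelling the perimeters of the unchanged components in $\FaQeL(\wE)\le\FaQeL(\widetilde{G})$, and using $\lt||\widetilde{G}|-\omega_N\rt|\le\lt||\wE|-\omega_N\rt|+|B_r|$ together with $\Lambda\sim 1+Q^2$, $|B_r|\les r^N$, and $r\le 1$, we obtain
\[
P(E^i)-P(F)\le Q^2\lt(\Iae(\widetilde{G})-\Iae(\wE)\rt)+C(Q^2+r^\alpha)r^{N-\alpha}.
\]
It suffices therefore to prove $\Iae(\widetilde{G})-\Iae(\wE)\les r^{N-\alpha}$.

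\emph{Step 2 (reduction via Lemma \ref{cutbis}).} Set $A:=E^i\setminus F\subset B_r$. If $|A|=0$, then $E^i\subset F$ up to a null set, the optimal measure for $\wE$ is admissible for $\widetilde{G}$, and $\Iae(\widetilde{G})\le\Iae(\wE)$. Otherwise, introduce the auxiliary generalized set $\widetilde{H}$ obtained by replacing $E^i$ with $E^i\cup F=F\cup A$ (a disjoint union). Since $E^i\subset E^i\cup F$, the optimal measure for $\wE$ remains admissible for $\widetilde{H}$, whence $\Iae(\widetilde{H})\le\Iae(\wE)$. Applying Lemma \ref{cutbis} to $\widetilde{H}$, with the decomposition of its $i$-th set as $F\cup A$, yields
\[
\Iae(\widetilde{H})\ge\Iae(\widetilde{G})-\frac{\Iae(\widetilde{G})^2}{\Iae(A)},
\]
and combining the two estimates gives $\Iae(\widetilde{G})-\Iae(\wE)\le\Iae(\widetilde{G})^2/\Iae(A)$.

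\emph{Step 3 (quantitative bounds).} By the hypothesis on $\Lambda$ we have $|\wE|=\omega_N$, so $|\widetilde{G}|\sim 1$ for $r\ll 1$; the upper bound in Lemma \ref{rieszbis} together with $\eps<1$ then gives $\Iae(\widetilde{G})\les 1$ uniformly in $\eps$. For the denominator we drop the penalization and use the fractional isocapacitary inequality (balls minimize $\Ia$ at fixed volume):
\[
\Iae(A)\ge\Ia(A)\ges|A|^{\frac{\alpha-N}{N}}\ges r^{\alpha-N}.
\]
Combining yields $\Iae(\widetilde{G})-\Iae(\wE)\les r^{N-\alpha}$, which together with Step 1 completes the proof. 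The main subtlety, and the reason Lemma \ref{cutbis} is used rather than a naive charge transport, is uniformity in $\eps$: a direct comparison of optimal measures would bring in terms proportional to $\eps^{-1}$, whereas the quadratic correction $\Iae(\widetilde{G})^2/\Iae(A)$ combined with the $\eps$-free lower bound on $\Iae(A)$ gives the clean geometric scaling $r^{N-\alpha}$ that survives the limit $\eps\to 0$.
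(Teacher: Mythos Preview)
Your proof is correct and follows essentially the same approach as the paper's: test minimality against the competitor with $F$ in place of $E^i$, then control $\Iae(\widetilde{G})-\Iae(\wE)$ via Lemma \ref{cutbis} and the bounds from Lemma \ref{rieszbis}. The only differences are cosmetic: the paper first reduces to the cases $E^i\subset F$ or $F\subset E^i$ via $P(E\cap F)+P(E\cup F)\le P(E)+P(F)$ and applies Lemma \ref{cutbis} directly to $\wE$, whereas you handle general $F$ in one stroke by passing through the auxiliary $\widetilde{H}$ with $i$-th component $E^i\cup F$; and for the lower bound on $\Iae(A)$ the paper uses the simpler monotonicity $A\subset B_r\Rightarrow\Ia(A)\ge\Ia(B_r)\sim r^{\alpha-N}$ rather than the isocapacitary inequality.
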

\begin{proof}
 Without loss of generality we may assume that $i=1$ and $x=0$. To simplify a bit notation we denote $E=E^1$. Using $\widetilde{F}=F\times (E^i)_{i\ge2}$ as competitor and the minimality of $\wE$ we have after simplifications
 \begin{equation}\label{eq:quasimininter}
  P(E)\le P(F)+ Q^2\lt(\Iae(\widetilde{F})-\Iae(\wE)\rt)+ \Lambda|E\Delta F|.
 \end{equation}
Since $P(E\cap F)+P(E\cup F)\le P(E)+P(F)$, it is enough to prove \eqref{eq:almostmin} under the additional condition $E\subset F$ or $F\subset E$. If $E\subset F$ then $\Iae(\wE)\ge \Iae(\widetilde{F})$ and thus \eqref{eq:almostmin} follows from $|E\Delta F|\les r^N$. \\
We are left with the case $F\subset E$. Writing $E=F\cup (E\backslash F)$ and appealing to \eqref{stimottabis} from Lemma \ref{cutbis}, we have 
\begin{equation}\label{eq:lowerI}
 \Iae(\widetilde{F})-\Iae(\wE)\le \frac{\Iae^2(\widetilde{F})}{\Iae(E\backslash F)}.
\end{equation}
Now on the one hand, since by Lemma \ref{lemLambda}, $|E|+\sum_{i\ge 2} |E^i|=\omega_N$, $|F|+\sum_{i\ge 2} |E^i|=\omega_N- |E\backslash F|\ges 1$ (recall that $r\ll1$) and thus by \eqref{eq:rieszbis} of Lemma \ref{rieszbis},
\[
 \Iae(\widetilde{F})\les 1.
\]
On the other hand, since $E\backslash F\subset B_r$ we have
% \footnote{\textcolor{blue}{here we could do a bit better and use that $\Iae(B_r)\ges r^{-(N-\alpha)}+ \eps r^{-N}$.
% This would show that with this regularization also in the case $\alpha>1$ we have, regularity of minimizers (in particular this applies to the model of Cyrill and Matteo 
% in the case $\beta=1$, see also De Philippis, Vescovo Hirsch)}}  
\begin{equation}\label{eq:lowerboundIae}\Iae(E\backslash F)\ge \Iae(B_r)+\eps\inf_{\mu(B_r)=1} \int_{B_r} \mu^2\ges r^{-(N-\alpha)} +\eps r^{-N}\ge r^{-(N-\alpha)}.\end{equation}
Putting these two things together, \eqref{eq:lowerI} yields
\[
  \Iae(\widetilde{F})-\Iae(\wE)\les r^{N-\alpha}.
\]
Plugging  this in combination with $|E\Delta F|\les r^N$ in \eqref{eq:quasimininter} concludes the proof of \eqref{eq:almostmin}. 
\end{proof}
\begin{remark}
 From \eqref{eq:lowerboundIae} we see that we can improve \eqref{eq:almostmin} to 
 \[
   P(E^i)\le P(F) +C\lt(Q^2\min(r^{-\alpha}, \eps^{-1}) +1\rt) r^N \qquad \forall   F\Delta E^i\subset B_r(x).
 \]
This means that for every $\alpha\in(0,N)$, if  $r\le \eps^{1/\alpha}$ then the classical regularity theory for perimeter almost-minimizers applies (see \cite{maggi}). 
In particular, for $\alpha=2$, this gives a very elementary proof of the regularity of minimizers for the functional considered in \cite{MurNov, DHV}
if the permittivity of the droplet is assumed to coincide with the permittivity of the vacuum (see however \cite[Remark 4.6]{DHV} where it is observed that this assumption
would also simplify their proof).
\end{remark}

At this point we see the difference between the case $\alpha>1$ and $\alpha\le 1$. Indeed, in the latter case,
thanks to  \eqref{eq:almostmin}, we may appeal to the regularity theory for almost-minimizers of the perimeter (since $N-\alpha\ge N-1$). 
We start with the simpler part which consists of the density estimates. Since the cases $\alpha<1$ and $\alpha=1$ are treated  differently,
we introduce the notation $\one_{\alpha=1}=1$ if $\alpha=1$ and $\one_{\alpha=1}=\infty$ if $\alpha\in(0,1)$.
\begin{proposition}\label{prop:density}
 For every $\alpha\in(0,1]$ and  $Q>0$ let $\Lambda\sim 1+Q^2$ be such that Proposition \ref{prop:almostmin1} applies. Then, for every  $\eps\in(0,1]$, and every generalized minimizer $\wE=(E^i)_{i\ge 1}$ of $\FaQeL$,
 if $\max(Q^2r^{1-\alpha},r)\ll1$ and $x\in \partial^M E^i$ (recall that $\partial^M$ is the measure-theoretical boundary), 
 \begin{equation}\label{eq:densvol}
  \min(|E^i\cap B_r(x)|, |B_r(x)\backslash E^i|)\ges r^N
 \end{equation}
and 
\begin{equation}\label{eq:densper}
  P(E^i,B_r(x))\sim r^{N-1}.
\end{equation}
  As a consequence, there exists $Q_1>0$ such that for $Q\le \bar Q\le Q_1 \one_{\alpha=1}$, up to the choice of a representative,
  every generalized minimizer is made of  finitely many $E^i$, each of which is  connected  with $E^i\in \S$ and for which $\partial E^i=\partial^M E^i$. Moreover, the number of such components as well as their diameter depends only on $\bar Q$.
\end{proposition}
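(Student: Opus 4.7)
My plan is to deduce \eqref{eq:densvol} and \eqref{eq:densper} from the almost-minimality inequality \eqref{eq:almostmin} by a Tamanini-type argument, and then to use the resulting density estimates to extract the structural conclusions. Throughout, the crucial point is that the constant $C$ in \eqref{eq:almostmin} is independent of $\eps$ (and of $Q$), so that all subsequent estimates inherit this uniformity.

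For the density estimates, fix $x\in\partial^M E^i$ and set $V(r):=|E^i\cap B_r(x)|$. The perimeter decomposition
\[
 P(E^i\setminus B_r(x))\le P(E^i)-P(E^i,B_r(x))+\H^{N-1}(\partial B_r(x)\cap E^i),
\]
together with the a.e.\ identity $V'(r)=\H^{N-1}(\partial B_r(x)\cap E^i)$, and the almost-minimality \eqref{eq:almostmin} applied to $F=E^i\setminus B_r(x)$, yields
\[
 P(E^i,B_r(x))\le V'(r)+C(Q^2+r^\alpha)r^{N-\alpha}.
\]
Combining with the relative isoperimetric inequality $P(E^i,B_r(x))+V'(r)\gtrsim\min(V(r),\omega_Nr^N-V(r))^{(N-1)/N}$ gives a differential inequality for $V$. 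Under the smallness hypothesis $Q^2r^{1-\alpha}+r\ll 1$, the error term $(Q^2+r^\alpha)r^{N-\alpha}$ is a small fraction of $r^{N-1}$, so the classical integration argument closes and produces $V(r)\ge cr^N$; a symmetric comparison with $F=E^i\cup B_r(x)$ gives $|B_r(x)\setminus E^i|\ge cr^N$, proving \eqref{eq:densvol}. The estimate \eqref{eq:densper} is then immediate: the upper bound comes from $V'(r)\lesssim r^{N-1}$ on a dense set of radii and the displayed inequality, and the lower bound from the relative isoperimetric inequality combined with \eqref{eq:densvol}.

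For the structural part, I fix $\bar Q$ so small that there exists $r_0>0$ depending only on $\bar Q$ for which \eqref{eq:densvol}--\eqref{eq:densper} hold uniformly for all $Q\le\bar Q$, all $\eps\in(0,1]$, all generalized minimizers, and all $r\le r_0$. For $\alpha<1$ this just requires $r_0$ small enough that $\bar Q^2 r_0^{1-\alpha}+r_0\ll 1$, giving $r_0=r_0(\bar Q)$ with no constraint on $\bar Q$; for $\alpha=1$ the condition becomes $\bar Q^2+r_0\ll 1$, forcing the a priori bound $\bar Q\le Q_1$ reflected by the factor $\one_{\alpha=1}$. For any component $E^i$ of the generalized minimizer, $\partial^*E^i\ne\emptyset$ since $P(E^i)>0$; picking $x\in\partial^*E^i\subset\partial^M E^i$ and applying \eqref{eq:densvol} at $r=r_0$ gives $|E^i|\ge cr_0^N$. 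Since $\sum_i|E^i|=\omega_N$ by Lemma \ref{lemLambda}, the number of nonempty components is at most $\omega_N/(cr_0^N)$, a quantity depending only on $\bar Q$. Taking for each $i$ the canonical representative $E^i=\{x:\limsup_{r\to 0}|E^i\cap B_r(x)|/|B_r|>0\}$ makes $E^i$ closed, and the density estimates force $\partial E^i=\partial^M E^i$ and hence $P(E^i)=\H^{N-1}(\partial^*E^i)=\H^{N-1}(\partial E^i)$, so $E^i\in\S$ once boundedness is established. For the latter, covering $\partial E^i$ by finitely overlapping balls of radius $r_0$ each carrying $\gtrsim r_0^{N-1}$ of perimeter yields at most $CP(E^i)/r_0^{N-1}\lesssim(1+\bar Q^2)/r_0^{N-1}$ balls, and by connectedness this bounds the diameter of $E^i$ solely in terms of $\bar Q$; if some $E^i$ is disconnected, I replace it in the generalized set by its (finitely many) connected components, leaving the energy unchanged.

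The main obstacle is the borderline nature of the case $\alpha=1$, where $N-\alpha=N-1$: here the error term in \eqref{eq:almostmin} is of exactly the same order as the perimeter, so the differential inequality only closes once the prefactor $Q^2$ is itself below a universal constant, which is precisely the role of $Q_1$ in the statement. For $\alpha<1$ the exponent gap $N-\alpha>N-1$ absorbs any fixed $\bar Q$ provided $r$ is taken sufficiently small, so the whole argument goes through with no upper bound on $\bar Q$, consistent with the convention $\one_{\alpha=1}=\infty$.
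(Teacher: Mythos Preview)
Your proof is correct and follows essentially the same route as the paper. The paper obtains \eqref{eq:densvol}--\eqref{eq:densper} by invoking \cite[Proposition~3.1]{gnrnote}, which carries out exactly the Tamanini-type differential inequality you sketch (comparison with $E^i\setminus B_r(x)$ and $E^i\cup B_r(x)$, relative isoperimetric inequality, integration of the ODE), and derives the structural conclusions by the standard argument as in \cite{KnuMu}; you have simply spelled these out. One small point worth making explicit: when you split a disconnected $E^i$ into its connected components and assert that the energy is ``unchanged'', the perimeter and volume are trivially additive, but for the capacitary term you need $\Iae(E\cup F)\ge \Iae(E\times F)$ for disjoint $E,F$ (drop the nonnegative cross-interaction $2I_\alpha(\mu|_E,\mu|_F)$); the paper states this inequality, and then minimality of $\wE$ forces equality after the split.
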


\begin{proof}
 Estimates \eqref{eq:densvol} and \eqref{eq:densper} follow directly from \cite[Proposition 3.1]{gnrnote}.
 For $\alpha<1$ they can also be obtained (under slightly stronger hypothesis on $r$) from  the more classical theory, see for instance \cite{maggi}.\\
 The  regularity of the minimizers as well as the bound on the number and diameter of the  connected components is classical (see e.g. \cite{KnuMu}) once we observe that for every $Q\le  \bar Q\le Q_1\one_{\alpha=1}$ there is $\bar r$ depending only on $\bar Q$ such that $\max(Q^2{\bar r}^{1-\alpha},\bar r)\ll1$ for every $Q\le \bar Q$. The fact that we may assume that each component $E^i$ of $\wE$ is made of a single connected component follows from $\Iae(E\cup F)\ge \Iae(E\times F)$ for every sets $E$, $F$ with $E\cap F=\emptyset$.
\end{proof}
Before stating  the full conclusions of the regularity theory for perimeter almost minimizers, let us conclude the proof of the existence of generalized volume-constrained minimizers of $\FaQ$.

\begin{theorem}\label{theo:existence}
 Let $Q_1$ be given by Proposition \ref{prop:density}. Then for every $0<Q\le \bar Q\le Q_1 \one_{\alpha=1}$ there exist 
  generalized  minimizers  $\wE=(E^i)_{i=1}^I\in \mathcal S^\N$ of 
 \[
  \min_{\wE\in \S^\N}\lt\{ \FaQ(\wE) \  : \ |\wE|=\omega_N\,\rt\}.
 \]
Moreover, for each $i\le I$, $E^i$ is a perimeter almost minimizer in the sense of \eqref{eq:almostmin} and both $I$ and ${\rm diam}(E^i)$ are bounded by a constant depending only on $\bar Q$.
\end{theorem}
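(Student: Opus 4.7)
The plan is to obtain $\wE$ as the limit, as $\eps_n\to 0$, of generalized minimizers $\wE_n$ of the regularized functional $\F_{\alpha,Q,\Lambda,\eps_n}$ produced by Proposition \ref{prop:genmineps}, where $\Lambda\sim 1+Q^2$ is chosen so that Lemma \ref{lemLambda} applies. Using $B_1$ as a competitor gives $\F_{\alpha,Q,\Lambda,\eps_n}(\wE_n)\les 1+Q^2$ uniformly in $n$. By Proposition \ref{prop:density}, as soon as $Q\le\bar Q\le Q_1\one_{\alpha=1}$, each $\wE_n=(E^i_n)_{i=1}^{I_n}$ consists of at most $I_0$ connected components, each of diameter at most $D_0$, for constants $I_0,D_0$ depending only on $\bar Q$. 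Since translating individual components leaves every term in $\F_{\alpha,Q,\Lambda,\eps_n}$ invariant, we may place each $E^i_n$ in a fixed ball $B_{D_0}$, and after extracting a subsequence assume $I_n=I$ is constant.

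By the compactness theorem for sets of finite perimeter combined with the uniform density estimates \eqref{eq:densvol}, up to a further subsequence $E^i_n\to E^i$ in $L^1$ and the boundaries converge in Hausdorff distance. Let $\wmu_n$ be the optimal measure for $\Iae(\wE_n)$ given by Corollary \ref{cor:number}. Since each $\mu^i_n$ is supported in $\overline{B_{D_0}}$ with $\sum_i \mu^i_n(\R^N)=1$, weak-$\ast$ compactness of probability measures on a compact set yields $\mu^i_n\rightharpoonup \mu^i$; the Hausdorff convergence of the boundaries together with the preservation of mass on compact sets ensures that $\mu^i$ is supported in $E^i$ and $\sum_i \mu^i(E^i)=1$, so that $\wmu$ is admissible for $\I_\alpha(\wE)$. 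Combining lower semicontinuity of the perimeter with that of $I_\alpha$ under weak convergence of measures (see \cite[(1.4.5)]{landkof}) one finds
\[
 \F_{\alpha,Q,\Lambda}(\wE)\,\le\, P(\wE)+Q^2 I_\alpha(\wmu)+\Lambda\big||\wE|-\omega_N\big|\,\le\, \liminf_{n\to\infty}\F_{\alpha,Q,\Lambda,\eps_n}(\wE_n).
\]

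To upgrade this to the minimality of $\wE$, it suffices to show that for every competitor $\wF\in\S^{\N}$ with $\I_\alpha(\wF)<\infty$ one has $\limsup_n \Iae(\wF)\le \I_\alpha(\wF)$; combined with $\F_{\alpha,Q,\Lambda,\eps_n}(\wE_n)\le \F_{\alpha,Q,\Lambda,\eps_n}(\wF)$, this gives $\F_{\alpha,Q,\Lambda}(\wE)\le \F_{\alpha,Q,\Lambda}(\wF)$. Given $\delta>0$, a standard potential-theoretic density argument (mollification of an optimal measure, together with a slight approximation of $\wF$) produces an $L^2$ admissible measure $\wnu_\delta$ for $\wF$ with $I_\alpha(\wnu_\delta)\le \I_\alpha(\wF)+\delta$; then $\Iae(\wF)\le I_\alpha(\wnu_\delta)+\eps_n\sum_i\int (\nu^i_\delta)^2\to I_\alpha(\wnu_\delta)$ as $n\to\infty$, and letting $\delta\to 0$ yields the claim. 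By Lemma \ref{lemLambda} one then has $|\wE|=\omega_N$, and $\wE$ is a volume-constrained generalized minimizer of $\F_{\alpha,Q}$.

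The remaining qualitative statements are now immediate. The bounds $I\le I_0$ and $\mathrm{diam}(E^i)\le D_0$ are inherited from the approximating sequence, and the density estimates pass to the limit, which together with the classical regularity theory for almost-minimizers yields $\partial E^i=\partial^M E^i$ and $P(E^i)=\H^{N-1}(\partial E^i)$, i.e.\ $E^i\in\S$. The almost-minimality \eqref{eq:almostmin} for each $E^i$ follows by the standard limit argument: given a local competitor $F$ with $F\Delta E^i\subset B_r(x)$, plug $F_n=(F\cap B_r(x))\cup (E^i_n\setminus B_r(x))$ into \eqref{eq:almostmin} for $E^i_n$ and pass to the limit using $L^1$ convergence and a coarea selection of radii. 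The most delicate ingredient is the $L^2$-density approximation for the capacitary energy used in the comparison step; all other steps are a compactness and semicontinuity exercise made possible by the uniform diameter and density bounds, which themselves rely on the strong-repulsion regime $\alpha\le 1$ encoded in \eqref{eq:almostmin}.
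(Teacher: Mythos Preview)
Your argument follows the same scheme as the paper: pass to the limit $\eps\to 0$ along minimizers of $\FaQeL$, use the uniform bounds from Proposition~\ref{prop:density} to extract a limit $\wE$, combine a $\liminf$ inequality with a recovery-sequence ($\limsup$) step, and conclude via Lemma~\ref{lemLambda}. The lower-semicontinuity part (tracking optimal measures directly rather than citing \cite[Theorem~4.2]{gnrI}) and the almost-minimality-in-the-limit part (the coarea/replacement argument rather than citing \cite{gnrnote}) are equivalent variants of what the paper does.

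The one place where you are vaguer than the paper is the recovery step. You claim that a ``standard potential-theoretic density argument (mollification of an optimal measure, together with a slight approximation of $\wF$)'' yields an $L^2$ measure $\wnu_\delta$ admissible \emph{for $\wF$ itself} with $I_\alpha(\wnu_\delta)\le\Ia(\wF)+\delta$. For general $\wF\in\S^\N$ this is not quite standard: the equilibrium measure may concentrate near $\partial F$, and mollification pushes mass outside $F$. The paper does not attempt this; instead it first enlarges $F$ to a smooth $F^\delta\supset F$ via \cite{Sc}, so that perimeter and volume are nearly preserved while $\Ia(F^\delta)\le\Ia(F)$ by monotonicity, and then, for smooth $F^\delta$, invokes \cite[Proposition~2.16]{gnrI} to produce an $L^\infty$ admissible density. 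Your parenthetical hints at this, but the competitor in the comparison should then be $F^\delta$ rather than $F$, and the control on $P(F^\delta)$ and $|F^\delta|$ must be recorded. Once you make this substitution explicit, your proof is complete and matches the paper's.
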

\begin{proof}
 Let $\Lambda\sim 1+\bar Q^2$ be such that both Lemma \ref{lemLambda} and Proposition \ref{prop:genmineps} apply. By the latter, for every $\eps\in(0,1]$ and $Q\le \bar Q$,
 there exists a generalized minimizer $\wE_\eps$ of $\FaQeL$. Moreover, by Proposition \eqref{prop:density}, $\wE_\eps=(E_\eps^i)_{i=1}^I$ for some connected  sets $E^i\in \S$,
 where  both  $I$  and their diameters depend only on $\bar Q$. Thanks to the uniform  density bounds \eqref{eq:densvol} and \eqref{eq:densper}
 we can extract a subsequence for which $E_\eps^i$ converges both in $L^1$ and in the Kuratowski sense to some $E^i\in \S$. By compactness  of perimeter almost minimizers, see \cite{gnrnote}, $E^i$ satisfies \eqref{eq:almostmin}. We set $\wE=(E^i)_{i\le I}$. Using that $\Iae(\wE_\eps)\ge \Ia(\wE_\eps)$ and the fact that $\Ia$ is lower semi-continuous under this convergence (see e.g. \cite[Theorem 4.2]{gnrI}), we obtain
 \[
  \liminf_{\eps\to 0} \FaQeL(\wE_\eps)\ge \FaQL(\wE).
 \]
We now prove that 
\[
 \inf_{\wF\in \S^\N}\lt\{\FaQL(\wF)\rt\}\ge \limsup_{\eps\to 0} \inf_{\wF\in \S^\N}\lt\{\FaQeL(\wF)\rt\},
\]
which combined with the previous inequality would show that $\wE$ is a generalized minimizer of $\FaQL$ as
\[
\FaQL(\wE)\ge\inf_{\wF\in \S^\N}\lt\{\FaQL(\wF)\rt\}. 
\]
Arguing exactly as in Lemma \ref{lem:equalinf}, we see that it is enough to prove that for every  $F\in \S$, there exists a sequence $F_\eps$ such that 
\begin{equation}\label{eq:toproveexist}
 \limsup_{\eps\to 0} \FaQeL(F_\eps)\le \FaQL(F). 
\end{equation}
By \cite{Sc} applied to $F^c$, we can find  smooth compact sets  $F_\delta$ with $F\subset F_\delta$, $P(F^\delta)\le P(F)+ \delta$ and $| |F|-|F^\delta||\le \delta$. Since $\Ia(F)\ge \Ia(F^\delta)$ as $F\subset F^\delta$ we have (actually there is equality) 
\[
 \limsup_{\delta\to 0} \FaQL(F^\delta)\le \FaQL(F)
\]
and we can thus further assume that $F$ is smooth in the proof of \eqref{eq:toproveexist}. For smooth sets, by \cite[Proposition  2.16]{gnrI}\footnote{The statement of \cite[Proposition 2.16]{gnrI} requires $F$ to be connected but the proof works for disconnected sets as well.}, we can find for every $\delta>0$ a function $f_\delta\in L^\infty(F)$ with $\int_F f_\delta=1$ and such that 
\[
 \Ia(f_\delta)\le \Ia(F)+\delta.
\]
Since for every $\delta>0$, $\lim_{\eps\to 0} \Iae(f_\delta)=\Ia(f_\delta)$,  a diagonal argument shows that $\Ia(F)=\lim_{\eps\to 0} \Iae(F)$. Using $F_\eps=F$ we conclude the proof of \eqref{eq:toproveexist}.\\

As $\wE$ is a generalized minimizer of $\FaQL$, Lemma \ref{lemLambda} implies that $|\wE|=\omega_N$ and thus $\wE$ is also a volume-constrained generalized minimizer of $\FaQ$. 
\end{proof}
We end this section by recalling the regularity properties of generalized minimizers and show in particular that for small charge $Q$ they are actually classical minimizers. We start with the case $\alpha<1$.
\begin{proposition}\label{prop:regsmall}
 For  $\alpha\in(0,1)$ and $Q>0$ let $\wE=(E^i)_{i=1}^I$ be a volume-constrained generalized minimizer of $\FaQ$. Then, $\partial^* E^i$ (recall that  $\partial^*$ denotes the reduced boundary) 
 are $C^{1,\frac{1}{2}(1-\alpha)}$ regular. Moreover, denoting by $\Sigma_i=\partial E^i\backslash \partial^* E^i$,  we have that for every $i$,  $\Sigma_i$ is empty if $N\le7$ is at most finite if $N=8$ and satisfies $\H^s(\Sigma_i)=0$ if $s> N-8$ and $N\ge 9$.\\
 In addition, for $Q\ll1$, $\wE=E_Q$ is a classical volume-constrained minimizer of $\FaQ$, $\Sigma(E_Q)=\emptyset$  and for every $\beta< \frac{1}{2}(1-\alpha)$, $E_Q$ converges in $C^{1,\beta}$ to $B_1$ as $Q\to 0$. 
\end{proposition}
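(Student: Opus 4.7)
The plan is a direct application of the classical regularity theory for perimeter almost-minimizers. By Theorem \ref{theo:existence}, each component $E^i$ satisfies the almost-minimality property \eqref{eq:almostmin}; since $\alpha\in(0,1)$, we may rewrite $r^{N-\alpha}=r^{N-1}\cdot r^{1-\alpha}$ with $1-\alpha\in(0,1)$, recognising this as the standard $(\Lambda,r_0)$-almost minimality of the perimeter in the sense of Tamanini with deviation exponent $\kappa=1-\alpha$. Tamanini's $\eps$-regularity theorem (see \cite{tamanini}, or \cite{maggi}) then yields that $\partial^* E^i$ is $C^{1,\kappa/2}=C^{1,(1-\alpha)/2}$, while the singular set $\Sigma_i=\partial E^i\setminus\partial^* E^i$ inherits the classical Hausdorff dimensional bounds from the theory of area-minimising hypersurfaces: empty for $N\le 7$, locally finite for $N=8$, and $\H^s(\Sigma_i)=0$ for $s>N-8$ when $N\ge 9$.

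For the small-$Q$ part, I argue by compactness along a sequence $Q_n\to 0$ with associated generalised minimisers $\wE_n=(E_n^i)_{i=1}^{I_n}$. Theorem \ref{theo:existence} gives uniform bounds on $I_n$ and on the diameters of the $E_n^i$, while the density estimates of Proposition \ref{prop:density} provide $L^1$ and Kuratowski compactness of each component after suitable translations and relabeling. Comparing with $B_1$ yields $P(\wE_n)\le P(B_1)+Q_n^2\Ia(B_1)$, so any accumulation $\wE_0$ satisfies $P(\wE_0)\le P(B_1)$ with $|\wE_0|=\omega_N$; the isoperimetric inequality (and its equality case) forces $\wE_0$ to be, up to translation, the single ball $B_1$. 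In particular all but one of the $E_n^i$ must have vanishing volume along the subsequence, but the uniform lower density bound \eqref{eq:densvol} prevents any component from shrinking to nothing while having a boundary point. Hence $I_n=1$ for $n$ large and $\wE_n=E_{Q_n}$ is a classical minimiser.

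Finally, to upgrade the convergence: the uniform $C^{1,(1-\alpha)/2}$ regularity allows writing $\partial E_Q$, after centring, as a normal graph of some $\phi_Q:\partial B_1\to\R$ with $\|\phi_Q\|_{C^{1,(1-\alpha)/2}(\partial B_1)}$ uniformly bounded, while $L^1$ convergence together with the density estimates gives Hausdorff convergence of $\partial E_Q$ to $\partial B_1$, hence $\|\phi_Q\|_\infty\to 0$. Arzelà–Ascoli then upgrades this to $C^{1,\beta}$ convergence for every $\beta<(1-\alpha)/2$. Smoothness of the limit $\partial B_1$, together with Tamanini's $\eps$-regularity applied at any point of $\partial E_Q$ (the excess is small by Hausdorff proximity to $\partial B_1$), rules out singular points for $Q$ sufficiently small, so $\Sigma(E_Q)=\emptyset$. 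The only step requiring a little care is the single-component claim, which is handled by the compactness/isoperimetric argument above; all the remaining regularity is essentially automatic from \eqref{eq:almostmin} and the rewriting $N-\alpha=(N-1)+(1-\alpha)$.
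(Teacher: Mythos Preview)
Your proof is correct. The regularity part (rewriting $r^{N-\alpha}=r^{N-1}\cdot r^{1-\alpha}$ and invoking Tamanini) is exactly what the paper does. For the small-$Q$ statement, however, you argue by compactness along a sequence $Q_n\to 0$, using only the \emph{standard} isoperimetric inequality to identify the limit as $B_1$ and then the uniform lower density bound \eqref{eq:densvol} to exclude extra components. The paper takes a shorter, quantitative route: it applies the \emph{quantitative} isoperimetric inequality directly to obtain
\[
\Bigl(|E^1\Delta B_1|+\sum_{i\ge 2}|E^i|\Bigr)^2\les P(\wE)-P(B_1)\le Q^2\,\Ia(B_1),
\]
which in one line yields both $E^i=\emptyset$ for $i\ge 2$ (via \eqref{eq:densvol}) and the $L^1$-closeness to $B_1$, together with an explicit rate in $Q$. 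Your compactness argument is slightly more elementary (it does not require the sharp quantitative inequality) but is less direct and gives no rate; the paper's version is also the one reused verbatim in Proposition~\ref{prop:reg1} for $\alpha=1$, where the rate appears explicitly as \eqref{eq:closeness}.
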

\begin{proof}
 The conclusion follows from the classical regularity theory for perimeter almost minimizers, see \cite{tamanini,maggi} and the fact that by the quantitative isoperimetric inequality, up to translation and relabeling,
 \[
  \lt(|E^1\Delta B_1|+ \sum_{i\ge 2} |E^i|\rt)^2\les P(\wE)-P(B_1)\le Q^2 \Ia(B_1), 
 \]
which implies in conjunction with \eqref{eq:densvol} that for $Q$ small enough, $E^i=\emptyset$ for $i\ge 2$ (so that $\wE=E^1$ is a classical minimizer) together with  the convergence to $B_1$.
\end{proof}
For $\alpha=1$ it is well-known that in general \eqref{eq:almostmin} does not even imply  $C^1$ regularity. In order to state the counterpart of Proposition \ref{prop:regsmall} in this case, let us first recall the definition of Reifenberg flat sets.
\begin{definition}\label{Reifenberg flatness}
 Let $\delta,r_0>0$ and $x\in \R^N$. We say that $E$ is $(\delta,r_0)-$Reifenberg flat in $B_{r_0}(x)$ if for every $B_{r}(y)\subset B_{r_0}(x)$, there exists an hyperplane $H_{y,r}$
 containing $y$ and such that 
 \begin{itemize}
  \item we have 
   \[
  \frac{1}{r} d(\partial E\cap B_{r}(y), H_{y,r}\cap B_{r}(y)) \le \delta,
 \]
 where $d$ denotes the Hausdorff distance;
 \item one of the connected components of 
 \[
 \{ d(\cdot, H_{y,r})\ge  2 \delta r\}\cap B_{r}(y)
 \]
is included in $E$ and the other in $E^c$.
 \end{itemize}
We say that $E$ is uniformly $(\delta,r_0)-$Reifenberg flat if the above conditions hold for every $x\in \partial E$.
\end{definition}

\begin{proposition}\label{prop:reg1}
 Let $\alpha=1$. There exists $Q_2>0$ such that for every $Q\le Q_2$, every volume-constrained generalized minimizer of $\FaQ$ is a classical minimizer. Moreover, for every $\delta>0$, there exist $Q_\delta, r_\delta>0$ such that for every $Q\le Q_\delta$, every  volume-constrained minimizer $E_Q$ of $\FaQ$ is uniformly $(\delta,r_\delta)-$Reifenberg flat and up to translation,
 \begin{equation}\label{eq:closeness}
  |E_Q\Delta B_1|^2\les Q^2.
 \end{equation}

\end{proposition}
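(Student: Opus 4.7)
The plan is to follow the scheme used in Proposition \ref{prop:regsmall} for $\alpha<1$, replacing the direct appeal to a Tamanini-type $C^1$ regularity theory by the Reifenberg flatness theorem of \cite{gnrnote}. By Theorem \ref{theo:existence}, any volume-constrained generalized minimizer $\widetilde E=(E^i)_{i=1}^I$ of $\FuQ$ consists of finitely many bounded connected components in $\S$, each satisfying the almost-minimality \eqref{eq:almostmin}, which for $\alpha=1$ reads $P(E^i)\le P(F)+C(Q^2+r)r^{N-1}$ whenever $F\Delta E^i\subset B_r$, with uniform bounds on $I$ and on the diameters. These ingredients together with the density estimate \eqref{eq:densvol} will suffice.

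First, using $B_1$ as a competitor in the volume-constrained problem yields $P(\widetilde E)-P(B_1)\le Q^2\I_1(B_1)\les Q^2$, since $\I_1(\widetilde E)\ge 0$. Combined with the sharp isoperimetric inequality applied to each $E^i$ and the quantitative isoperimetric inequality applied to the largest component $E^1$, exactly as in the proof of Proposition \ref{prop:regsmall}, I obtain, up to translation and relabeling,
\[
\Bigl(|E^1\Delta B_1|+\sum_{i\ge 2}|E^i|\Bigr)^2\les P(\widetilde E)-P(B_1)\les Q^2.
\]
Next I invoke the density estimate \eqref{eq:densvol}: every nonempty component $E^i$ necessarily satisfies $|E^i|\ge c_0>0$ for a universal constant $c_0$. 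Since $\sum_{i\ge 2}|E^i|\les Q$, choosing $Q_2$ small enough forces $E^i=\emptyset$ for $i\ge 2$, so $\widetilde E=E_Q\in\S$ is a classical volume-constrained minimizer, and the closeness bound $|E_Q\Delta B_1|^2\les Q^2$ follows.

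For the Reifenberg flatness I will directly invoke the main result of \cite{gnrnote}: the almost-minimality \eqref{eq:almostmin} with $\alpha=1$ is precisely the hypothesis used there, and it delivers uniform $(\delta,r_\delta)$-Reifenberg flatness of $E_Q$ as soon as $Q\le Q_\delta$ and $r_\delta$ are chosen small enough in terms of $\delta$. This last step is the main conceptual difficulty, and the very reason why the borderline case $\alpha=1$ is more delicate than $\alpha<1$: the error term $(Q^2+r)r^{N-1}$ sits at the critical scaling for perimeter almost-minimality, where the classical Tamanini--De Giorgi theory no longer yields $C^1$ regularity, and Reifenberg flatness is the strongest structural information available at this level. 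Its promotion to full $C^{1,\gamma}$ regularity, announced in Theorem \ref{thm:introreg}, will be carried out later through the sharper almost-minimality \eqref{eq:introsecondalmostmin} and the charge integrability estimate \eqref{eq:introReif}, and is not needed for Proposition \ref{prop:reg1} itself.
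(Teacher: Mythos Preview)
Your proposal is correct and follows exactly the approach of the paper: the paper's proof is a one-line reference to the argument of Proposition~\ref{prop:regsmall}, with the classical almost-minimizer regularity theory replaced by \cite[Corollary~1.4]{gnrnote}. You have simply written out in detail the steps that the paper leaves implicit (the use of $B_1$ as competitor, the quantitative isoperimetric inequality, the density lower bound to rule out small components, and the appeal to \cite{gnrnote} for Reifenberg flatness).
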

\begin{proof}
 The proof is exactly as for Proposition \ref{prop:regsmall}, replacing the classical regularity theory by \cite[Corollary 1.4]{gnrnote}.
\end{proof}

 \subsection{Second almost minimality property and regularity of minimizers}
 The aim of this section is to prove that in the case $\alpha=1$, we can pass from the Reifenberg flatness of volume-constrained minimizers of $\FaQ$ stated in Proposition \ref{prop:reg1} 
 to almost  $C^{1,\frac{1}{2}}$ regularity. This will be obtained by proving a second almost minimality property for minimizers together with a higher  integrability result
 for the optimal measure $\mu$.
 \begin{remark}\label{rem:regalpha}
  Let us point out that  using a similar proof for $\alpha\in(0,1)$,  it would be possible to improve the $C^{1,\frac{1}{2}(1-\alpha)}$
  regularity from Proposition \ref{prop:regsmall} to almost $C^{1,\frac{1}{2}}$.
  However, in this case, the proof of the integrability of $\mu$ can be greatly simplified by appealing directly to \cite{RosSer} (see also \eqref{estimmu} below).
  Moreover, we expect that any $C^{1,\beta}$ regularity may be improved to higher regularity through the Euler-Lagrange equation, see \cite{murnovruf2}.   
 \end{remark}
We start with the quasi-minimality property.
\begin{proposition}\label{prop:am}
 There exists $C>0$ depending only on $N$ with the following property.  If $Q\le 1$ and   $E$ is a volume-constrained minimizer of $\FuQ$ with $\mu_E$
 the corresponding $1/2$-harmonic measure i.e. $\I_1(E)=I_1(\mu_E)$,
 then for every $x\in \R^N$ and $0<r\ll1$,
\[
 P(E)\le P(F)+ C\lt( Q^2 \lt(\int_{B_r(x)} \mu_E^{\frac{2N}{N+1}}\rt)^{\frac{N+1}{N}}+ r^N\rt) \qquad \forall E\Delta F\subset B_r(x).
\]
\end{proposition}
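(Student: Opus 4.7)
The plan is to construct an admissible probability measure $\tilde\mu$ on $F$ by modifying $\mu_E$ on $B_r(x)$ and compensating with a Lebesgue-type measure supported in $E\setminus B_r(x)$; the identity $u_E\equiv\I_1(E)$ on $E$ from \eqref{eq:uconstantE} then forces the linear-in-$\mu_E(A)$ terms to cancel, and the remaining quadratic contribution is controlled by Hardy--Littlewood--Sobolev.

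By Lemma \ref{lemLambda} and $Q\le 1$, $E$ is also a minimizer of the penalized functional $\F_{1,Q,\Lambda}$ for some $\Lambda\sim 1$. Testing this against any competitor $F$ with $E\Delta F\subset B_r(x)$ yields
\[
 P(E)-P(F)\le Q^2\bigl(\I_1(F)-\I_1(E)\bigr)+\Lambda|E\Delta F|\le Q^2\bigl(\I_1(F)-\I_1(E)\bigr)+Cr^N,
\]
so everything reduces to proving $\I_1(F)-\I_1(E)\le C\left(\int_{B_r(x)}\mu_E^{2N/(N+1)}\right)^{(N+1)/N}$.

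Let $A:=E\setminus F\subset B_r(x)$, $m:=\mu_E(A)$, and set
\[
 \sigma:=\frac{\chi_{E\setminus B_r(x)}}{|E\setminus B_r(x)|},
\]
which is well defined for $r\ll 1$ since $|E\setminus B_r(x)|\ge \omega_N-Cr^N\ges 1$. Then
\[
 \tilde\mu:=\mu_E-\mu_E|_A+m\,\sigma
\]
is a nonnegative probability measure supported in $F$, hence admissible for $\I_1(F)$. Writing $\nu:=-\mu_E|_A+m\sigma$ one has $I_1(\tilde\mu)-\I_1(E)=2\int u_E\,d\nu+I_1(\nu)$. Both $\mu_E|_A$ and $\sigma$ are supported in $E$, where $u_E\equiv\I_1(E)$ quasi-everywhere by \eqref{eq:uconstantE} (and hence both $\mu_E$- and Lebesgue-a.e., as sets of zero $1$-capacity are Lebesgue-negligible). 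Consequently the linear-in-$m$ contribution cancels:
\[
 \int u_E\,d\nu=-m\,\I_1(E)+m\,\I_1(E)=0.
\]
Discarding the non-positive middle summand of $I_1(\nu)=I_1(\mu_E|_A)-2m\langle\mu_E|_A,\sigma\rangle+m^2I_1(\sigma)$, we are reduced to bounding $I_1(\mu_E|_A)+m^2\,I_1(\sigma)$. The first term is handled by Hardy--Littlewood--Sobolev with the exponent $p=\frac{2N}{N+1}$ conjugate to the kernel $|x-y|^{-(N-1)}$, giving
\[
 I_1(\mu_E|_A)\le C\,\|\mu_E\|_{L^{p}(A)}^2\le C\left(\int_{B_r(x)}\mu_E^{\frac{2N}{N+1}}\right)^{\frac{N+1}{N}}.
\]
For the second, $I_1(\sigma)\le C$ thanks to $|E\setminus B_r|\ges 1$ and the diameter bound on $E$ (Theorem \ref{theo:existence}), while Hölder's inequality gives $m\le\mu_E(B_r(x))\le C r^{(N-1)/2}\left(\int_{B_r(x)}\mu_E^{2N/(N+1)}\right)^{(N+1)/(2N)}$, so that $m^2I_1(\sigma)\le Cr^{N-1}\left(\int_{B_r}\mu_E^{2N/(N+1)}\right)^{(N+1)/N}$, absorbed by the first term for $r\ll 1$.

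The main subtlety of the plan is the cancellation in the third paragraph: a simple-minded construction that renormalized $\mu_E|_F$ without inserting the compensator $m\sigma$ supported \emph{inside} $E$ would have to grapple with the delicate question of whether $\mu_E(B_r(x))$ stays bounded away from $1$, which is not clear a priori without integrability information on $\mu_E$. Placing the compensating mass inside $E$, where $u_E$ is already saturated at $\I_1(E)$, makes the transfer of charge out of $A$ energetically free at leading order and reduces the problem to a pure HLS estimate on $\mu_E|_A$. This is the analogue in our measure-theoretic setting of the harmonic-replacement step of \cite[Proposition 4.5]{DHV}; crucially, no regularity of $\mu_E$ beyond the defining potential property \eqref{eq:uconstantE} is used, the actual $L^{2N/(N+1)}$-integrability that makes the right-hand side meaningful being established separately in Lemmas \ref{lem:keyharm} and \ref{lem:estmu}.
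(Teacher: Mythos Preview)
Your proof is correct and follows the same overall strategy as the paper --- construct an admissible competitor for $\I_1(F)$, exploit the constancy of $u_E$ on $E$ to kill the linear term, and control the quadratic remainder --- but the implementation differs in two noteworthy ways. First, the paper reduces to $F\subset E$ via the submodularity $P(E\cap F)+P(E\cup F)\le P(E)+P(F)$ and then spreads the missing mass $m$ uniformly over $F$; you instead spread it over $E\setminus B_r\subset E\cap F$, which keeps the compensator inside $E$ regardless of whether $F\subset E$, so no preliminary reduction is needed. Second, the paper bounds the quadratic remainder by passing to potentials, writing $I_1(\mu-\mu_E)\sim[u-u_E]_{H^{1/2}}^2$ and then using the Sobolev embedding $H^{1/2}\hookrightarrow L^{2N/(N-1)}$ together with H\"older; you bound $I_1(\mu_E|_A)$ directly by Hardy--Littlewood--Sobolev, which is the dual formulation of the same inequality. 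Your route is slightly more economical; the paper's has the minor advantage of making the $H^{1/2}$ structure explicit. One cosmetic point: the diameter bound from Theorem~\ref{theo:existence} is not actually needed for $I_1(\sigma)\lesssim 1$, since HLS applied to $\chi_{E\setminus B_r}$ already gives $I_1(\sigma)\lesssim |E\setminus B_r|^{-(N-1)/N}\lesssim 1$.
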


%Let $F\subset E$ with $E\Delta F\subset B_r$. 
\begin{proof}
Without loss of generality we may assume that $x=0$ and $\mu_E\in L^{\frac{2N}{N+1}}(B_r)$ since otherwise there is nothing to prove. 
By Lemma \ref{lemLambda}, there exists a universal constant $\Lambda>0$ (recall that $Q\le 1$) such that $E$ is a minimizer of 
\[
 \FuQ(E)+\Lambda||E|-\omega_N|.
\]
Arguing as in the proof of Proposition \ref{prop:almostmin1}, we see that it is enough to prove that for every  $F\subset E$ with $E\backslash F\subset B_r$, 
\begin{equation}\label{eq:toprovealmostmin2}
 \I_1(F)\le \I_1(E)+C \lt(\int_{E\backslash F} \mu_E^{\frac{2N}{N+1}}\rt)^{\frac{N+1}{N}}.
\end{equation}

In order to prove \eqref{eq:toprovealmostmin2} we follow the general strategy of \cite[Proposition 4.5]{DHV}  and use
\[
 \mu=\left(\mu_E+ \frac{\mu_E(E\backslash F)}{|F|}\right)\chi_F 
\]
as a competitor for $\I_1(F)$. We define
\[
 u_E(x)=\int_{E} \frac{d\mu_E(y)}{|x-y|^{N-1}} \qquad \textrm{and} \qquad u(x)=\int_{E}\frac{d\mu(y)}{|x-y|^{N-1}}
\]
the potentials associated to $\mu_E$ and $\mu$.
We recall from \eqref{eq:ELuE} that $u$ solves on $\R^N$ the equation
\[
(-\Delta)^{\frac{1}{2}}u=C'(N,1) \mu,
\]
and that by \eqref{Iu},
\[
\frac{1}{C'(N,1)}[u]_{H^{\frac{1}{2}}}^2=\int_{E} u\,d\mu=I_1(\mu).
\]
Let us notice that since $u_E=\I_1(E)$ on $E$, recall \eqref{eq:uconstantE}, and since $\mu_E(E)=\mu(E)=1$,
\begin{equation}\label{simplify}
 \int_E u_E d(\mu-\mu_E)=0.
\end{equation}
Since $\I_1(F)\le I_1(\mu)$ and $\I_1(E)=I_1(\mu_E)$ we have 
\[
 \I_1(F)-\I_1(E)\le \int_E u d\mu -\int_E u_Ed\mu_E= \int_E (u-u_E) d(\mu-\mu_E)+ \int_E u d\mu_E -\int_E u_E d\mu_E.
\]
Using Fubini we have 
\[
  \int_E u d\mu_E =\int_E  u_E d\mu\stackrel{\eqref{simplify}}{=} \int_E u_E d\mu_E
\]
and we get 
\[
 \I_1(F)-\I_1(E)\le\int_E (u-u_E) d(\mu-\mu_E)=\frac{1}{C'(N,1)} [u-u_E]^2_{H^{\frac{1}{2}}}.
\]
We now estimate $[u-u_E]^2_{H^{\frac{1}{2}}}$. For this, using H\"older inequality and Sobolev embedding we write
\begin{align*}
 [u-u_E]^2_{H^{\frac{1}{2}}}&=\int_E (u-u_E) d(\mu-\mu_E)\\
 &\le \|u-u_E\|_{L^{\frac{2N}{N-1}}} \|\mu-\mu_E\|_{L^{\frac{2N}{N+1}}}\\
 &\les [u-u_E]_{H^{\frac{1}{2}}}\|\mu-\mu_E\|_{L^{\frac{2N}{N+1}}}.
\end{align*}
Using Young inequality, this leads to 
\[
 \I_1(F)-\I_1(E)\les [u-u_E]^2_{H^{\frac{1}{2}}}\les \|\mu-\mu_E\|_{L^{\frac{2N}{N+1}}}^2.
\]
We are left with estimating $\|\mu-\mu_E\|_{L^{\frac{2N}{N+1}}}$. By definition of $\mu$, we have  $\mu-\mu_E= \frac{\mu_E(E\backslash F)}{|F|}\chi_F- \mu_E \chi_{E\backslash F}$ and thus  
\[
\begin{aligned}
  \|\mu-\mu_E\|_{L^{\frac{2N}{N+1}}}^2&= \left(\int_E \left| \frac{\mu_E(E\setminus F)}{|F|}\chi_F -\mu_E\chi_{E\setminus F } \right|^{\frac{2N}{N+1}} \right)^{\frac{N+1}{N}}\\
  &= \left( \int_F \left(\frac{\mu_E(E\setminus F)}{|F|}\right)^{\frac{2N}{N+1}}    + \int_{E\setminus F} \mu_E^{\frac{2N}{N+1}}\right)^{\frac{N+1}{N}}\\  
  %&=\left(  \int_F \frac{\mu_E(E\setminus F)}{|F|}\right)^{\frac{2N}{N+1}}  + \left( \int_{E\setminus F} \mu_E^{\frac{2N}{N+1}}\right)^{\frac{N+1}{N}}\\
  &  \les \frac{\mu_E(E\backslash F)^2}{|F|^{\frac{N-1}{N}}}+ \lt(\int_{E\backslash F} \mu_E^{\frac{2N}{N+1}}\right)^{\frac{N+1}{N}}\\
  &\stackrel{|F|\ges 1}{\les} \mu_E(E\backslash F)^2 + \lt(\int_{E\backslash F} \mu_E^{\frac{2N}{N+1}}\right)^{\frac{N+1}{N}}.
\end{aligned}
\]
Finally, by H\"older inequality,
\[
 \mu_E(E\backslash F)^2\le \lt(\int_{E\backslash F} \mu_E^{\frac{2N}{N+1}}\rt)^{\frac{N+1}{N}} |E\backslash F|^{\frac{N-1}{N}}\les \lt(\int_{E\backslash F} \mu_E^{\frac{2N}{N+1}}\rt)^{\frac{N+1}{N}}.
\]
This concludes the proof of \eqref{eq:toprovealmostmin2}.
\end{proof}
 From Proposition \ref{prop:am}, we see that in order to prove that $E$ is a perimeter almost-minimizer in the classical sense, it is enough to show decay estimates
 for $\|\mu_E\|_{L^{2N/(N+1)}(B_r(x))}$ for $x\in \partial E$.
 %  The rest of this section is devoted to the proof of the fact that if $E$ is $\delta-$Reifenberg flat with $\delta$ small enough then  
%  \begin{equation}\label{eq:mu10}
%  	 \|\mu_E\|_{L^{\frac{2N}{N+1}}(B_r)}^2\les r^p.
% \end{equation}
%   for some $p>N-1$. This together with Proposition \ref{prop:am} immediately entails that for $p>N-1$ it holds, for any $x$ and any $F$ such that $E\Delta F\subset B_r(x)$,
%   \[
% \begin{aligned}
%   P(E)-P(F)&\le  C\lt( Q^2 \lt(\int_{B_r(x)} \mu_E^{\frac{2N}{N+1}}\rt)^{\frac{N+1}{N}}+ r^N\rt) \\
%   &\lesssim  Q^2 (r^p+r^N)\sim Q^2r^p.
%   \end{aligned}
%   \]
%   which by classical theory of perimeter quasi-minimizers (see \cite{maggi}) implies that $E$ is $C^{1,\delta}-$regular for some $\delta>0$ up to a set of low Hausdorff dimension. The conclusion will the follow easily.
We start by proving  the following  H\"older estimate for the potentials.

\begin{lemma}\label{lem:keyharm}
 For every $\delta>0$, there exists $\gamma\in (0,\frac{1}{2})$ with $\gamma\to \frac{1}{2}$ as $\delta\to 0$ such that if $E$ is a bounded $(\delta,r_0)-$Reifenberg flat domain then 
 \begin{equation}\label{eq:mainharm}
  |1-\I_1^{-1}(E)u_E|\les \frac{d^{\gamma}(\cdot,\partial E)}{r_0^\gamma},
 \end{equation}
 where $u_E(x)=\int_{E} \frac{d\mu_E(y)}{|x-y|^{N-1}}$ and $\mu_E$ is such that $\I_1(E)=I_1(\mu_E)$.
\end{lemma}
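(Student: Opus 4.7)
\medskip

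\noindent\textbf{Plan for \eqref{eq:mainharm}.} Set
\[
 v := 1 - \I_1^{-1}(E)\, u_E.
\]
By \eqref{eq:uconstantE} and the Frostman maximum principle for Riesz potentials (see e.g.\ \cite{landkof}), $v$ is bounded, nonnegative, equals $0$ on $E$, tends to $1$ at infinity, and, since $\mu_E$ is concentrated on $E$, satisfies $(-\Delta)^{1/2}v=0$ in $E^c$. Thus \eqref{eq:mainharm} is equivalent to the boundary H\"older estimate $v(x)\les (d(x,\partial E)/r_0)^{\gamma}$ for $x$ near $\partial E$.

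The plan is to turn this into a local PDE question through the Caffarelli--Silvestre extension. Let $V$ be the bounded harmonic extension of $v$ to $\R^{N+1}_+=\R^N\times(0,\infty)$. The Neumann condition $\partial_t V(\cdot,0)=0$ on $E^c$ (equivalent to $(-\Delta)^{1/2}v\equiv 0$ there) allows even reflection across $\{t=0\}$, producing a bounded nonnegative function $\tilde V$ on $\R^{N+1}$ which is harmonic on $\R^{N+1}\setminus (E\times\{0\})$, vanishes on the thin obstacle $E\times\{0\}$, and tends to $1$ at infinity. The claim reduces to showing
\[
 \sup_{B_r(x_0,0)}\tilde V\les (r/r_0)^{\gamma}\qquad\text{for every }x_0\in\partial E,\ r\le r_0.
\]

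This is now a boundary regularity statement for a nonnegative harmonic function in $\R^{N+1}$ vanishing on a Reifenberg-flat thin obstacle. The model case $E=H$ a half-space has $E\times\{0\}$ as a slit with codimension-two edge $\partial H\times\{0\}$; using polar coordinates $(\rho,\theta)\in[0,\infty)\times(-\pi,\pi)$ in the plane perpendicular to that edge, the bounded harmonic function vanishing on the slit and going to $1$ at infinity is, to leading order, $C\rho^{1/2}\cos(\theta/2)$, which explains the target exponent $\tfrac12$. For a general Reifenberg-flat $E$ I would run a dyadic flatness-improvement scheme at each boundary point $x_0\in\partial E$: on every scale $r\le r_0$, Definition~\ref{Reifenberg flatness} produces a hyperplane $H_r$ that is $\delta r$-close to $\partial E$ in $B_r(x_0)$, and comparing $\tilde V$ with the explicit half-space profile attached to $H_r$ through the Alt--Caffarelli--Friedman monotonicity formula (as in \cite{LemSir}) should yield a one-step contraction
\[
 \sup_{B_{r/2}(x_0,0)}\tilde V\le \tau(\delta)\sup_{B_r(x_0,0)}\tilde V,
\]
with $\tau(\delta)<1$ and $\tau(\delta)\to 2^{-1/2}$ as $\delta\to 0$. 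Iterating this across all dyadic scales gives the claim with $\gamma=-\log_2\tau(\delta)\to 1/2$.

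The main obstacle is turning the ACF formula into a quantitative one-step decay with a contraction factor $\tau(\delta)$ that is explicit in $\delta$. ACF provides two-sided control of the $L^2$ energy growth for pairs of nonnegative subharmonic functions with disjoint supports vanishing on a common set; extracting from this an oscillation estimate for $\tilde V$ (or a suitable directional derivative of $\tilde V$ along the approximate edge $\partial H_r\times\{0\}$) requires choosing the ACF test functions carefully relative to $H_r$ at each scale and controlling the error coming from the fact that $\tilde V$ vanishes on the rough set $E\times\{0\}$ rather than on the model slit $H_r\times\{0\}$. This is the step closest in spirit to the harmonic case \cite{LemSir}, and it is where the degeneration of the exponent from $1/2$ to $\gamma(\delta)<1/2$ is quantified.
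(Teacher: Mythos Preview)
Your setup is exactly the paper's: let $v=1-\I_1^{-1}(E)u_E$, extend harmonically to $\R^{N+1}_+$, and reduce the claim to boundary H\"older decay for a nonnegative harmonic function vanishing on $E\times\{0\}$. The paper also cites \cite{LemSir} and uses an ACF-type monotonicity formula. So at the level of ingredients you are aligned.

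The divergence is in how the monotonicity is actually used. You propose a $\sup$-iteration: at each dyadic scale, compare $\tilde V$ with the explicit slit profile attached to the approximating half-space $H_r$ and extract a contraction $\tau(\delta)$, then iterate. You correctly flag this as the hard step, and indeed it is not clear how ACF for \emph{pairs} of competing functions yields such a one-step $\sup$-contraction; the errors from replacing $E\times\{0\}$ by $H_r\times\{0\}$ at each scale are not obviously summable, and ``choosing the ACF test functions carefully'' is where the content would have to live. As written this is a plan with an acknowledged gap, not a proof.

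The paper closes this gap differently and more directly, avoiding any comparison with model profiles. It works in $L^2$ rather than $L^\infty$: for $x\in\partial E\times\{0\}$ it sets
\[
\Phi(r)=\frac{1}{r^{2\bar\gamma}}\int_{B_r^+(x)}\frac{|\nabla V|^2}{|y-x|^{N-1}},
\]
and shows $\Phi$ is nondecreasing on $(0,1]$. The monotonicity comes from a spectral lower bound: on each half-sphere $\partial^+B_r(x)$ the function $V$ vanishes on $E\cap\partial B_r$, and Reifenberg flatness forces this Dirichlet region to contain (essentially) a spherical half-cap of opening $\pi-O(\delta)$; the corresponding first eigenvalue $\lambda(r)$ is then bounded below by $\lambda_{H_\delta}$, and the characteristic exponent $\gamma(r^2\lambda(r))=\sqrt{((N-1)/2)^2+r^2\lambda(r)}-(N-1)/2$ is at least $\bar\gamma(\delta)\to\tfrac12$ as $\delta\to0$ (by continuity of $\lambda_{H_\delta}$ in $\delta$). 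Monotonicity of $\Phi$ gives $r^{-(N-1)}\int_{B_r^+}|\nabla V|^2\les r^{2\bar\gamma}$, and then Poincar\'e plus Campanato's criterion converts this energy decay into the pointwise H\"older bound. No model comparison, no dyadic iteration, no scale-by-scale errors to sum.

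In short: your reduction is correct, but the paper's mechanism is a one-function eigenvalue-based monotonicity (in the spirit of Almgren/one-phase ACF), not a two-phase ACF comparison with explicit profiles. If you want to push your $\sup$-iteration through, you would need to replace the vague ``one-step contraction via ACF'' by something concrete; the paper's eigenvalue route is the clean way to do exactly this.
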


\begin{proof}
 By scaling we may assume that $r_0=1$. We follow ideas from the proofs of \cite{LemSir,zilio} and use the Alt-Caffarelli-Friedman monotonicity formula to show \eqref{eq:mainharm}.
 Let $u=1-\I_1^{-1}(E)u_E$ and $v$ be the harmonic extension of $u$ to $\R^{N+1}_+$. Since $u\in[0,1]$, also $v\in[0,1]$. Notice that since $u\le 1$ it is enough to prove \eqref{eq:mainharm} in  $\{d(\cdot,\partial E)\ll 1\}$. For every $x\in \R^{N+1}_+$ and every $r>0$, 
 we let $B_r^+(x)= B_r(x)\cap \R^{N+1}_+$ and  $\partial^+ B_r(x)=\partial B_r(x)\cap \R^{N+1}_+$.  We claim that 
 \begin{equation}\label{eq:aimholder}
  \frac{1}{r^{N-1}}\int_{B_r^+(x)}|\nabla v|^2\les r^{2\gamma}\int_{B_1^+(x)}\frac{|\nabla v|^2 }{|x-y|^{N-1}} \qquad \forall 0<r\ll1 
 \end{equation}
for some exponent $\gamma>0$ with $\gamma\to \frac{1}{2}$ as $\delta\to 0$
and 
\begin{equation}\label{eq:claimholder}
 \sup_{\R^{N+1}_+}\int_{B_1^+(x)}\frac{|\nabla v|^2 }{|x-y|^{N-1}}\les 1.
\end{equation}
Provided \eqref{eq:aimholder} and \eqref{eq:claimholder}  hold, we can conclude the proof of \eqref{eq:mainharm} using Poincar\'e inequality, Campanato's criterion and $v=0$ in $E\times \{0\}$. Eventually we show that $\gamma\to1/2$ as $\delta\to0$. We devote a step for each of these three claims\\
{\bf Step 1:} Proof of \eqref{eq:aimholder}.
To show that \eqref{eq:aimholder} holds,  we first observe that it is enough to consider $x\in \partial E\times\{0\}$. 
Indeed, assume the statement is proven in that case. Then  for $x\notin \partial E\times\{0\}$,  using either an  odd reflection or an even reflection with respect to $x_{N+1}=0$
we may assume that $v$ is harmonic in $B_r(x)$ for every $r\le \bar r=\min(1,d(x,\partial E\times \{0\}))$.
It is then a classical fact that
\[
 r\mapsto \frac{1}{r^{N+1}}\int_{B_r(x)}|\nabla v|^2
\]
is increasing (this follows for instance from sub-harmonicity of $|\nabla v|^2$ which is itself a consequence of Bochner formula). Therefore for any $0\le \gamma\le 1$,
\[
 \frac{1}{r^{N-1}}\int_{B_r^+(x)}|\nabla v|^2\le \lt(\frac{r}{\bar r}\rt)^2\frac{1}{\bar r^{N-1}}\int_{B_{\bar r}^+(x)}|\nabla v|^2\le \lt(\frac{r}{\bar r}\rt)^{2\gamma}\frac{1}{\bar r^{N-1}}\int_{B_{\bar r}^+(x)}|\nabla v|^2.
\]
If $\bar r\ll 1$  then \eqref{eq:aimholder} follows from the case $ x\in \partial E\times\{0\}$. If instead $\bar r\ges 1$,  
\[
 \lt(\frac{r}{\bar r}\rt)^{2\gamma}\frac{1}{\bar r^{N-1}}\int_{B_{\bar r}^+(x)}|\nabla v|^2\les r^{2\gamma}\int_{B_{\bar r}^+(x)}\frac{|\nabla v|^2}{|x-y|^{N-1}}\le r^{2\gamma}\int_{B_1^+(x)}\frac{|\nabla v|^2 }{|x-y|^{N-1}},
\]
which proves \eqref{eq:aimholder} also in this case.\\

Let now $x\in \partial E\times\{0\}$. Without loss of generality we may assume that $x=0$. For every $r>0$, let\footnote{we denote by $\nabla_\tau$ the tangential gradient on the sphere and $\partial_\nu$ the normal derivative} 
\[
 \lambda(r)=\min\lt\{\frac{\int_{\partial^+ B_r} |\nabla_\tau v|^2}{\int_{\partial^+ B_r} v^2} : \ v=0 \textrm{ on } E\times\{0\}\cap \partial B_r^+\rt\}
\]
be the first eigenvalue of the Laplacian on the half-sphere with Dirichlet boundary conditions on $E$. 
% In particular we have 
% \begin{equation}\label{eq:lowerPoincare}
%  \int_{\partial^+ B_r} v^2\le \frac{1}{\lambda(r)}\int_{\partial^+ B_r} |\nabla_\tau v|^2.
% \end{equation}
Define then the function
\[
 \gamma(\lambda)=\sqrt{\lt(\frac{N-1}{2}\rt)^2+\lambda}-\frac{N-1}{2}
\]
and then 
\[
  \overline{\gamma}=\min_{r\le 1} [\gamma(r^2\lambda(r))].
\]
% Notice that by definition, since $1\ge  \overline{\gamma}>0$ (see \cite[Lemma 2.5]{Zilio}), this gives for $r\le r_0$,
% \begin{equation}\label{eq:lowerlambda}
%  r^2\lambda(r)\ge \lt( \overline{\gamma}+ \frac{N-1}{2}\rt)^2-\lt(\frac{N-1}{2}\rt)^2\ges  \overline{\gamma}.
% \end{equation}
We claim that for $r\in(0,1]$, the function
\[
 \Phi(r)=\frac{1}{r^{2 \overline{\gamma}}}\int_{B_r^+}\frac{|\nabla v|^2}{|x|^{N-1}}
\]
is increasing.
For this we follow almost verbatim the proof of \cite[Theorem 2.6]{zilio}  (see also \cite[Lemma 2.10 \& Lemma 2.11]{zilio}). 
In particular, a regularization argument is required to make rigorous all the computations below but we  refer the reader to \cite{zilio} for the details.
Computing the logarithmic derivative of $\Phi$, we have  
\[
 \frac{\Phi'}{\Phi}=-2\frac{ \overline{\gamma}}{r}+\lt(\int_{\partial^+ B_r} \frac{|\nabla v|^2}{|x|^{N-1}}\rt)\lt(\int_{B_r^+}\frac{|\nabla v|^2}{|x|^{N-1}}\rt)^{-1}
\]
and it is therefore enough to prove that 
\begin{equation}\label{eq:toprovemono}
 \lt(\int_{\partial^+ B_r} \frac{|\nabla v|^2}{|x|^{N-1}}\rt)\lt(\int_{B_r^+}\frac{|\nabla v|^2}{|x|^{N-1}}\rt)^{-1}\ge 2\frac{ \overline{\gamma}}{r}.
\end{equation}
We first claim that 
\begin{equation}\label{firstclaim}
 \int_{B_r^+} \frac{|\nabla v|^2}{|x|^{N-1}} dx \le \frac{1}{r^{N-1}}\int_{\partial^+B_r} v \partial_\nu v +\frac{N-1}{2 r^N}\int_{\partial^+ B_r} v^2.
\end{equation}
For this we notice that since $\Gamma=|x|^{1-N}$ is the Green function of the Laplacian on $\R^{N+1}$, we have $\Delta\Gamma\le 0$ and moreover, since it is radially symmetric, we have $\partial_{N+1} \Gamma=0$
if $x_{N+1}=0$. Using integration by parts we have (using that on $\partial B_r^+\cap\{x_{N+1}=0\}$, $ v\partial_\nu v=0$)
\begin{align*}
 \int_{B_r^+} |\nabla v|^2\Gamma &=\int_{\partial B_r^+} v \Gamma \partial_\nu v-\int_{B_r^+} \frac{1}{2}\nabla (v^2)\cdot \nabla \Gamma\\
 &=\int_{\partial^+ B_r} \Gamma v \partial_\nu v-\int_{\partial B_{r}^+} \frac{1}{2} v^2 \partial_\nu \Gamma +\frac{1}{2} \int_{B_+^+} v^2\Delta \Gamma\\
 &\le \int_{\partial^+ B_r} \Gamma v \partial_\nu v-\int_{\partial^+ B_{r}} \frac{1}{2} v^2 \partial_\nu \Gamma \\
 &= \frac{1}{r^{N-1}}\int_{\partial^+B_r} v \partial_\nu v +\frac{N-1}{2 r^N}\int_{\partial^+ B_r} v^2.
\end{align*}
This proves \eqref{firstclaim}. We thus have 
\begin{align*}
\lefteqn{\lt(\int_{\partial^+ B_r} \frac{|\nabla v|^2}{|x|^{N-1}}\rt)\lt(\int_{B_r^+}\frac{|\nabla v|^2}{|x|^{N-1}}\rt)^{-1}}\\
&=\lt(r^{1-N}\int_{\partial^+ B_r}|\nabla v|^2\rt)\lt(\int_{B_r^+}|\nabla v|^2 \Gamma\rt)^{-1}\\
  &\stackrel{\eqref{firstclaim}}{\ge}\lt(r^{1-N}\int_{\partial^+ B_r}|\nabla v|^2\rt)\lt(r^{1-N}\lt(\int_{\partial^+B_r} v \partial_\nu v +\frac{N-1}{2 r}\int_{\partial^+ B_r} v^2\rt)\rt)^{-1}\\
  &\ge \lt(\int_{\partial^+ B_r}|\nabla_\tau v|^2 +\int_{\partial^+ B_r}|\partial_\nu v|^2\rt)\lt(\lt(\int_{\partial^+B_r} v^2\rt)^{\frac{1}{2}}\lt( \int_{\partial^+B_r} (\partial_\nu v)^2\rt)^{\frac{1}{2}} +\frac{N-1}{2 r}\int_{\partial^+ B_r} v^2\rt)^{-1}\\
   &= \lt( \frac{\int_{\partial^+ B_r}|\nabla_\tau v|^2}{\int_{\partial^+B_r} v^2} +\frac{\int_{\partial^+ B_r}|\partial_\nu v|^2}{\int_{\partial^+B_r} v^2}\rt)\lt(\lt(\frac{\int_{\partial^+ B_r}|\partial_\nu v|^2}{\int_{\partial^+B_r} v^2}\rt)^{\frac{1}{2}}+\frac{N-1}{2 r}\rt)^{-1}\\
   &\ge \min_{t>0} \frac{\lambda(r) +t^2}{t+ \frac{N-1}{2 r}}.
\end{align*}
A direct computation shows that the above minimum is attained for $t_{min}=\frac{1}{r}\gamma(r^2\lambda(r))$ and that
$\min_{t>0} \frac{\lambda(r) +t^2}{t+ \frac{N-1}{2 r}}=2t_{min}=\frac{2}{r}\gamma(r^2\lambda(r))$ so that eventually
\[
 \lt(\int_{\partial^+ B_r} \frac{|\nabla v|^2}{|x|^{N-1}}\rt)\lt(\int_{B_r^+}\frac{|\nabla v|^2}{|x|^{N-1}}\rt)\ge \frac{2}{r}\gamma(r^2\lambda(r))\ge \frac{2}{r}  \overline{\gamma}.
\]
This concludes the proof of \eqref{eq:toprovemono}. By monotonicity of $\Phi$ we have 
\[
 \frac{1}{r^{2 \overline{\gamma}+N-1}}\int_{B_r^+}|\nabla v|^2\le \Phi(r)\le \Phi(1)=\int_{B_1^+}\frac{|\nabla v|^2}{|x|^{N-1}}
\]
and the proof of \eqref{eq:aimholder} with $\gamma= \overline{\gamma}$ is completed.\\

{\bf Step 2:} Proof of  \eqref{eq:claimholder}. For $R\gg1$ we have 
\begin{align*}
 \int_{B_1^+(x)}\frac{|\nabla v|^2 }{|x-y|^{N-1}}&\le \int_{B_{R}^+(x)}\frac{|\nabla v|^2 }{|x-y|^{N-1}}\\
 &\stackrel{\eqref{firstclaim}}{\les} \frac{1}{R^{N-1}}\int_{\partial^+ B_{R}(x)} |v||\partial_\nu v|+ \frac{1}{R^N}\int_{\partial^+ B_{R}(x)} v^2\\
 &\stackrel{|v|\le 1}{\les} \frac{1}{R^{N-1}}\int_{\partial^+ B_{R}(x)} |\partial_\nu v|+ 1.
%  
%  \lt(\int_{\partial^+ B_{r_1}} |v|^2\rt)^{\frac{1}{2}}\lt(\int_{\partial^+ B_{r_1}} |\partial_\nu v|^2\rt)^{\frac{1}{2}}+ \frac{1}{r_1^N}\int_{\partial^+ B_{r_1}} v^2\\
% &\stackrel{|v|\le 1}{\les}\lt( \frac{1}{r_1^{N-2}}\int_{\partial^+ B_{r_1}} |\partial_\nu v|^2\rt)^{\frac{1}{2}}+1
%  %  &\stackrel{\eqref{eq:lowerPoincare}}{\les}  \frac{1}{\lambda^{\frac{1}{2}}(r_1)r_1^{N-1}} \int_{\partial^+ B_{r_1}}|\nabla v|^2 +\frac{1}{\lambda(r_1)r_1^N} \int_{\partial^+ B_{r_1}}|\nabla v|^2\\
% %  &\stackrel{\eqref{eq:lowerlambda}}{\les}\,  \frac{1}{ \overline{\gamma}r_1^{N-2}}\int_{\partial^+B_{r_1}} |\nabla v|^2.
\end{align*}
Since $v(z)=1-\I^{-1}_1(E)\int_E\frac{d\mu(y)}{|z-y|^{N-1}}$ and since $E$ is bounded, if $R$ is large enough and $z\in \partial^+ B_{R}(x)$,
\[
 |\nabla v(z)|\les \frac{\I^{-1}_1(E)}{|z|^N}
\]
and thus 
\[
 \frac{1}{R^{N-1}}\int_{\partial^+ B_{R}(x)} |\partial_\nu v|\les  \frac{\I^{-1}_1(E)}{R^{N-1}}.
\]
Sending $R\to \infty$, we conclude the proof of \eqref{eq:claimholder}.\\

\noindent
{\bf Step 3:} Asymptotic on $\bar \gamma$ and conclusion. We finally show that $ \overline{\gamma}\to 1/2$ as $\delta\to 0$.  Since $\gamma(\lambda)$ is an increasing function of $\lambda$ and since for every $r>0$, $\lambda(r)$ 
is monotone under inclusion (i.e. if we make the dependence in $E$ explicit, then $F\subset E$ implies $\lambda_F(r)\le \lambda_E(r)$), 
 it is enough to prove that $ \overline{\gamma}_{H_\delta}(r)\to \frac{1}{2}$ where 
\[
 H_{\delta}=\{x_1\le -\delta\}.
\]
If $\delta=0$, then $ \overline{\gamma}_{H_0}(r)=\frac{1}{2}$, see \cite[Proposition 2.12]{zilio}. Since $ \overline{\gamma}_{H_\delta}(r)$ 
does not depend on $r$, it is enough to consider $r=1$ and drop the dependence in $r$.\\
The proof is then concluded observing that $\delta\to \lambda_{H_{\delta}}$ is continuous as $\delta\to 0$. Indeed, this can be proven by an easy $\Gamma-$convergence argument.
If $u_\delta$ is a minimizer for $\lambda_{H_{\delta}}$, then up to normalization we may assume that $\int_{\partial^+B_1} u_\delta^2=1$ so that $u_\delta$ is bounded in $H^1(\partial^+ B_1)$ and its trace on $H_\delta$ is bounded in $H^{1/2}$.
Therefore, up to extraction it converges weakly in $H^1( \partial^+B_1)$ to a function $u_0$ which vanishes on $H_0$ (by compact embedding of $H^{1/2}$ in $L^2$ for instance). Therefore $u_0$ is admissible for $\lambda_{H_0}$ and we have 
\[
 \lambda_{H_0}\ge \liminf_{\delta\to 0} \lambda_{H_\delta}\ge  \liminf_{\delta\to 0} \int_{\partial^+ B_1}|\nabla_\tau u_\delta|^2\ge \int_{\partial^+ B_1} |\nabla_\tau u_0|^2\ge \lambda_{H_0}.
\]

%\textcolor{blue}{This proof is a bit long. Mayber we can either take some of the statements out in the form of intermediate Lemma or divide it in steps to give it a better structure}

\end{proof}
We now convert estimate \eqref{eq:mainharm} on the potential into the desired statement on $\mu_E$.
\begin{lemma}\label{lem:estmu}
For every $\gamma\in (0,\frac{1}{2})$, there exists $\delta_0>0$  such that for every $r_0>0$ and every  $(\delta,r_0)-$Reifenberg flat domain $E$ with $\delta\le \delta_0$,
$\mu_E\in L^{\frac{2N}{N+1}}_{\rm loc}(\R^N)$ and
 for every $x\in \R^N$ and $r<r_0/2$ there holds
\begin{equation}\label{toprovemu}
\lt(\int_{B_r(x)} \mu_E^{\frac{2N}{N+1}}\rt)^{\frac{N+1}{N}}\les r^{N-1+2\gamma},
\end{equation}
where the implicit constant depends on $N$, $\gamma$,  $r_0$ and $|E|$.
\end{lemma}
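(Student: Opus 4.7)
The plan is to convert the H\"older estimate on the potential $u_E$ provided by Lemma \ref{lem:keyharm}---more precisely, the Morrey-type Dirichlet energy bound $\int_{B_r^+(x)}|\nabla v|^2\lesssim r^{N-1+2\gamma}$ for $x\in\partial E$ and $r<r_0/2$ obtained en route to its proof---into the desired $L^p$ integrability of $\mu_E$ via the Caffarelli--Silvestre extension for the half-Laplacian. If $V$ denotes the harmonic extension to $\R^{N+1}_+$ of $v=1-\I_1^{-1}(E)u_E$, then up to a dimensional constant one has $\mu_E\propto \I_1(E)\,\partial_{N+1}V|_{\{x_{N+1}=0\}}$, so we are reduced to estimating the normal trace of a harmonic function whose local Dirichlet energy decays like $r^{N-1+2\gamma}$ near $\partial E\times\{0\}$.

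By a simple covering argument one may further assume $x\in\partial E$: if $B_r(x)\cap\partial E=\emptyset$ then either $\mu_E$ vanishes identically on $B_r(x)$, or $B_r(x)$ lies in the interior of $E$, in which case $v\equiv 0$ on a neighborhood of $B_r(x)\times\{0\}$ and standard interior estimates for harmonic functions give a much stronger pointwise bound on $\partial_{N+1}V$. I then rescale: setting $V_r(y):=r^{-\gamma}V(x+ry)$ produces a harmonic function on $B_2^+$ satisfying $\|\nabla V_r\|_{L^2(B_2^+)}\lesssim 1$ and vanishing on the rescaled flat set, and the claimed bound \eqref{toprovemu} reduces (after tracking the scalings of $\partial_{N+1}$ and of $L^p$ norms, with $p=\tfrac{2N}{N+1}$) to the scale-free statement
\[
\|\partial_{N+1}V_r|_{y_{N+1}=0}\|_{L^p(B_1)}\lesssim 1.
\]

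This unit-scale estimate is obtained by combining two classical ingredients: (i) a local trace bound for the normal derivative of a harmonic function, giving $\partial_{N+1}V_r\in H^{-1/2}_{\mathrm{loc}}$ with norm controlled by $\|\nabla V_r\|_{L^2(B_2^+)}$, and (ii) the Sobolev embedding $H^{-1/2}(\R^N)\hookrightarrow L^p(\R^N)$ for compactly supported distributions, dual to $H^{1/2}\hookrightarrow L^{\frac{2N}{N-1}}$. Concretely, for $\phi\in C_c^\infty(B_1)$ with Poisson extension $\Phi$ to the half-space, integration by parts yields $\int\phi\,\partial_{N+1}V_r=\int_{\R^{N+1}_+}\nabla V_r\cdot\nabla\Phi$, and a smooth cutoff of $V_r$ around $B_2^+$ (to handle the mismatch between $\nabla\Phi$ being supported on the whole half-space and $\nabla V_r$ being controlled only on $B_2^+$) combined with Cauchy--Schwarz and the identity $\|\nabla\Phi\|_{L^2(\R^{N+1}_+)}=\|\phi\|_{\dot H^{1/2}(\R^N)}$ closes the loop. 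Reading this at scale $r$ and squaring gives exactly \eqref{toprovemu}.

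The main obstacle is step (i): because $\mu_E$ is a priori only a Radon measure, the trace-by-duality argument has to be carried out through explicit cutoff manipulations, paying attention to boundary terms from the cutoff region and to the fact that $V_r$ is not compactly supported. The dependence of the final constant on $N$, $\gamma$, $r_0$ and $|E|$ is inherited from the corresponding dependence in Lemma \ref{lem:keyharm} and from the value of $\I_1(E)$ (itself controlled in terms of $|E|$ via \eqref{eq:rieszbis}).
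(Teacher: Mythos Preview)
Your step (ii) contains a genuine error: the Sobolev embedding runs the wrong way. The dual of $H^{1/2}(\R^N)\hookrightarrow L^{2N/(N-1)}(\R^N)$ is $L^{2N/(N+1)}(\R^N)\hookrightarrow H^{-1/2}(\R^N)$, not the reverse. Your pairing argument correctly yields
\[
\Bigl|\int \phi\,\partial_{N+1}V_r\Bigr|\lesssim \|\nabla V_r\|_{L^2(B_2^+)}\,\|\phi\|_{\dot H^{1/2}},
\]
which places $\partial_{N+1}V_r$ in $H^{-1/2}$; but to deduce membership in $L^{2N/(N+1)}$ by duality you would need to bound the pairing against every $\phi\in L^{2N/(N-1)}$, a \emph{strictly larger} class of test functions than $H^{1/2}$. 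The gap is not merely formal: for any $\alpha\in\bigl(\tfrac{N+1}{2N},1\bigr)$ the nonnegative compactly supported function $|x_N|^{-\alpha}\chi_{B_1}(x)$ lies in $H^{-1/2}(\R^N)$ but not in $L^{2N/(N+1)}(\R^N)$, so no such embedding can hold even on the cone of nonnegative, compactly supported functions. A single-scale $H^{-1/2}$ bound on the normal trace is therefore too weak to yield \eqref{toprovemu}.

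The paper's argument uses strictly more than the single-scale energy bound: from the full H\"older estimate \eqref{eq:mainharm} on the potential together with the integral representation \eqref{LaplacePV} of $(-\Delta)^{1/2}$ it first extracts the \emph{pointwise} bound $\mu_E(x)\lesssim d(x,\partial E)^{-(1-\gamma)}$ inside $E$. The $L^{2N/(N+1)}$ estimate on $B_r$ is then obtained by integrating this pointwise bound via a multiscale covering argument that exploits Reifenberg flatness at every scale below $r$: at each step one removes a slab of width $\sim\delta r$ around the approximating hyperplane (where $d(\cdot,\partial E)$ is comparable to the distance to the hyperplane and the integral is estimated directly), covers the remaining thin neighbourhood of $\partial E$ by $\lesssim\delta^{1-N}$ balls of radius $5\delta r$, and iterates. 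The resulting geometric series converges exactly when $\tfrac{2N}{N+1}(1-\gamma)<1$, which is why one needs $\gamma$ close to $\tfrac12$.
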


%We would now want to show that if $E$ is $\delta-$Reifenberg flat with $\delta$ small enough, then for every $x\in \partial E$,
%\begin{equation}\label{toprovemu}
 %\int_{B_r(x)} \mu_E^{\frac{4}{3}}\les r^{\frac{2}{3}+\eps}
%\end{equation}
%{\color{red}{ Indeed, \eqref{toprovemu} would allow to be back in the classical theory for quasi-minimizers of the perimeter.

%Assuming that this is proven, let us see how to obtain \eqref{toprovemu}. }}
\begin{proof} 
%We notice that it is enough to prove the result for $|\gamma-\frac{1}{2}|\ll 1$. 
Let $\gamma=\gamma(\delta)$ be given by Lemma \ref{lem:keyharm}. We first derive from  \eqref{eq:mainharm} the following estimate on $\mu_E$:
\begin{equation}\label{eq:mainmu}
 \mu_E\les d^{-(1-\gamma)}(\cdot,\partial E).
\end{equation}
If $u_E$ denotes the associated potential,
%, this is obtained arguing exactly as in (2.3)=estimu of the file ball.pdf using that 
\begin{align*}
 C'(N,1)\mu_E(x)&\stackrel{\eqref{eq:ELuE}}{=}(-\Delta)^{\frac{1}{2}}u_E(x)\stackrel{\eqref{LaplacePV}}{=}C(N,1/2)\int_{E^c}\frac{\mathcal I_1(E)-u_E(y)}{|x-y|^{N+1}}dy\\
 &\stackrel{\eqref{eq:mainharm}}{\les}\frac{\mathcal I_1(E)}{r_0^\gamma} \int_{E^c}\frac{ d^{\gamma}(y,\partial E)}{|x-y|^{N+1}}dy\les \frac{\mathcal I_1(E)}{r_0^\gamma}\int_{B_{d(x,\partial E)}^c(x)} \frac{dz}{|z|^{N+1-\gamma}}\\
 &\les \frac{\mathcal I_1(E)}{r_0^\gamma} d^{-(1-\gamma)}(x,\partial E)\les d^{-(1-\gamma)}(x,\partial E),
\end{align*}
%\textcolor{blue}{M: e vero che $I_1(E)\le I_1(B)$? nel caso e da dire} 
 where in the last line we used that if $B$ is a ball of measure $|E|$ then $\I_1(E)\le\I_1(B)$. This follows for instance  from  the fractional Polya-Sz\"ego inequality \cite{FS}
 and the capacitary definition \eqref{Capalpha} of $\I_1$ (see also \cite{Betsakos}).\\

\noindent We now  prove \eqref{toprovemu}. We may assume without loss of generality that $x=0$ and  $|\gamma-\frac{1}{2}|\ll 1$. 
For $P>0$, we set  $\mu_P=\min\{\mu_E,P\}$. Clearly $\mu_P$ is an integrable function and $\mu_P\to\mu$ a.e. in $E$. Moreover, since $0\le\mu_P\le\mu$, it satisfies inequality \eqref{eq:mainmu}.
We first claim that there exist $C_0,C_1>0$ such that for every $x\in \partial E$ and every $r\le r_0/2$, there exists a set $A(x)\subset \partial E$ such that 
\begin{equation}\label{hypA}
 \sharp A(x)\le C_1 \delta^{1-N}
\end{equation}
and 
\begin{equation}\label{conclmu}
 \int_{B_r(x)} \mu_P^{\frac{2N}{N+1}}\le C_0 r^{N-\frac{2N}{N+1}(1-\gamma)}+\sum_{y\in A(x)} \int_{ B_{5\delta r}(y)}\mu_P^{\frac{2N}{N+1}}.
\end{equation}
Again, there is no loss of generality by restricting ourselves to $x=0$. Recall that by Definition \ref{Reifenberg flatness},  since $E$ is $(\delta,r_0)-$Reifenberg flat, for  every $r\le r_0/2$,
there exists an hyperplane $H_{r}$ such that  
\[
d(\partial E\cap {B_r}, H_{r}\cap B_r)\le \delta r.
\]
In particular, if $N_{r}=\{y\in B_r \ : \ d(y,H_{r})>2 \delta r\}$ we have for $y\in N_{r}$, $d(y,\partial E)\sim d(y, H_{r})$ so that 
\begin{equation}\label{rec1}
\begin{aligned}
 \int_{B_r} \mu_P^{\frac{2N}{N+1}}
 &\le \int_{N_{r}} \mu_P^{\frac{2N}{N+1}} + \int_{B_r\cap N^c_{r}} \mu_P^{\frac{2N}{N+1}} \\
  &\stackrel{\eqref{eq:mainmu}}{\le} C r^{N-1} \int_{2\delta r}^{r} \frac{dt}{t^{\frac{2N}{N+1}(1-\gamma)}} + \int_{B_r\cap N^c_{r}} \mu_P^{\frac{2N}{N+1}}\\
& \le C_0 r^{N-\frac{2N}{N+1}(1-\gamma)}+ \int_{B_r\cap N^c_{r}} \mu_P^{\frac{2N}{N+1}}.
\end{aligned}
\end{equation}

We now estimate the last term on the right-hand side. By triangle inequality, for every $x\in N_r^c\cap B_r$, $d(x, \partial E\cap B_r)\le 3 \delta r$ and thus setting  $r_1=5\delta r$
we have that $\{B_{r_1}(y)\}_{y\in\partial E\cap B_r}$ is a covering of $N_{r}^c\cap B_r$.  By  Vitali covering Lemma we can extract a finite subset of points $A\subset \partial E\cap B_r$
such that 
\begin{itemize}
	\item $\left\{B_{r_1/5}(y) \right\}_{y\in A}$ is made up of pairwise disjoint balls, and
	\item $\left\{B_{r_1}(y) \right\}_{y\in A}$ is still a covering of $N_{r}^c\cap B_r$.
\end{itemize}
Since for $y\in A$, $B_{r_1/5}(y)=B_{ \delta r}(y)\subset N_r^c\cap B_{(1+\delta)r}$, the first condition gives 
\[
r_1^N  \sharp A \lesssim |N_{r}^c\cap B_{(1+\delta)r}|\sim (\delta r)r^{N-1}
\]
which, by  definition of $r_1$ yields \eqref{hypA}. The second condition gives 
\[
 \int_{B_r\cap N^c_{r}} \mu_P^{\frac{2N}{N+1}}\le \sum_{y\in A} \int_{B_{r_1}(y)} \mu_P^{\frac{2N}{N+1}},
\]
concluding the proof of \eqref{conclmu}.\\

For $k\ge 0$,  we set $r_k=(5\delta)^k r$ and define recursively $A_0=\{0\}$ and,
\[
 A_k=\cup_{x\in A_{k-1}} A(x).
\]
From \eqref{hypA} we have 
\begin{equation}\label{concA}
 \sharp A_k\le (C_1 \delta^{1-N})^{k}
\end{equation}
and thus applying recursively \eqref{conclmu}, we find for $K\ge 0$,
\[
 \int_{B_r} \mu_P^{\frac{2N}{N+1}}\le C_0 \sum_{k=0}^K (\sharp A_{k}) r_k^{N-\frac{2N}{N+1}(1-\gamma)}+\sum_{y\in A_{K+1}}\int_{ B_{r_{K+1}}(y)}\mu_P^{\frac{2N}{N+1}}.
\]
By definition of $\mu_P$ we have 
\begin{multline*}
 \sum_{y\in A_{K+1}}\int_{ B_{r_{K+1}}(y)}\mu_P^{\frac{2N}{N+1}}\le (\sharp A_{K+1})| B_{r_{K+1}}| P^{\frac{2N}{N+1}} \les (C_1 \delta^{1-N})^{K} (5\delta)^{KN} r^{N} P^{\frac{2N}{N+1}}\\
 =(5^N C_1\delta)^K r^N P^{\frac{2N}{N+1}}.
\end{multline*}
Thus, if $5^N C_1\delta<1$ we can send $K\to \infty$ to obtain from the definition of $r_k$ and \eqref{concA},
\[
  \int_{B_r} \mu_P^{\frac{2N}{N+1}}\le C_0 \lt(\sum_{k\ge 0} (C_2 \delta^{1-\frac{2N}{N+1}(1-\gamma)})^{k}\rt) r^{N-\frac{2N}{N+1}(1-\gamma)},
\]
where $C_2= C_1 5^{N-\frac{2N}{N+1}(1-\gamma)}$. 
Finally if $|\gamma-\frac{1}{2}|\ll1$, $\frac{2N}{N+1}(1-\gamma)<1$ and thus provided $\delta$ is small enough, the sum converges and we have
(notice that all the constants involved are independent of $P$) 
\[
  \lt(\int_{B_r} \mu_P^{\frac{2N}{N+1}}\rt)^{\frac{N+1}{N}}\les  r^{N-1 +2\gamma}.
\]
Sending $P\to \infty$  concludes the proof of \eqref{toprovemu}.

\end{proof}
 \begin{remark}
  We point out that this estimate is essentially optimal as can be seen from the case $E=B_1$, see \cite[Chapter II.13]{landkof} and Section \ref{sec:ballmin} below.
   \end{remark} 
\begin{remark}
 A quick inspection of the proof shows that for every $q<2$, $\mu_E\in L^q_{\rm loc}(\R^N)$ if $E$ is $\delta-$Reifenberg flat with $\delta$ small enough. 
 This is again optimal in light of the boundary behavior of the $1/2-$harmonic measure of the ball see \cite[Chapter II.13]{landkof} and Section \ref{sec:ballmin} below.
 This higher integrability (with respect to $L^{\frac{2N}{N+1}}$) would  however  by itself not be sufficient to obtain the regularity of volume-constrained minimizers of $\FaQ$ so that we need the more precise estimate \eqref{toprovemu}.   
\end{remark}
Combining  Proposition \ref{prop:reg1} together with Proposition \ref{prop:am} and Lemma \ref{lem:estmu} we obtain that for small charge $Q$ every volume
constrained minimizer of $\F_{1,Q}$ is also a perimeter almost-minimizer for which the classical regularity theory applies, see \cite{maggi} so that we have the counterpart of Proposition \ref{prop:regsmall}.
\begin{proposition}\label{prop:regsmall1}
 Let $\alpha=1$. For every $\gamma\in(0,1/2)$ there exists $Q(\gamma,N)>0$ such that for every $Q\le Q(\gamma,N)$, every volume-constrained minimizer $E_Q$ of $\F_{1,Q}$ is 
 $C^{1,\gamma}$ with uniformly bounded $C^{1,\gamma}$ norm. As a consequence, for every $\beta< \gamma$, up to translation, $E_Q$ converges in $C^{1,\beta}$ to $B_1$ as $Q\to 0$. 
\end{proposition}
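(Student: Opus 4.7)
The plan is to combine the three ingredients developed above (Reifenberg flatness of minimizers, the second almost-minimality estimate, and the integrability bound on the equilibrium measure) to reduce to a classical almost-minimality statement for the perimeter, and then conclude by Tamanini's theory.

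Fix $\gamma\in(0,1/2)$. First, I choose the Reifenberg flatness threshold $\delta_0=\delta_0(\gamma,N)$ provided by Lemma \ref{lem:estmu}, so that any $(\delta_0,r_0)$-Reifenberg flat set $E$ with $|E|\sim 1$ satisfies
\[
 \lt(\int_{B_r(x)}\mu_E^{\frac{2N}{N+1}}\rt)^{\frac{N+1}{N}}\les r^{N-1+2\gamma},\qquad r<r_0/2,
\]
with an implicit constant depending only on $N$, $\gamma$ and $r_0$. Applying Proposition \ref{prop:reg1} with this $\delta_0$, I obtain $Q_\delta=Q_{\delta_0}>0$ and $r_\delta=r_{\delta_0}>0$ such that for every $Q\le Q_\delta$ every volume-constrained generalized minimizer of $\F_{1,Q}$ is in fact a classical minimizer and is uniformly $(\delta_0,r_\delta)$-Reifenberg flat; moreover the quantitative closeness $|E_Q\Delta B_1|^2\les Q^2$ holds. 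I then set $Q(\gamma,N)=\min(Q_\delta,1)$.

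Now fix $Q\le Q(\gamma,N)$ and let $E_Q$ be a volume-constrained minimizer of $\F_{1,Q}$. Plugging the bound of Lemma \ref{lem:estmu} into the second almost-minimality property of Proposition \ref{prop:am}, I find that for every $x\in\R^N$, every $r\ll1$ and every $F$ with $E_Q\Delta F\subset B_r(x)$,
\[
 P(E_Q)\le P(F)+C\lt(Q^2 r^{N-1+2\gamma}+r^N\rt)\le P(F)+C\, r^{N-1+2\gamma},
\]
where in the last step I used $2\gamma<1$, $Q\le 1$ and $r\ll 1$ to absorb $r^N$ into $r^{N-1+2\gamma}$. This is exactly Tamanini's notion of $\omega$-almost-minimality of the perimeter with modulus $\omega(r)=C r^{2\gamma}$. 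By the classical regularity theory (see \cite{tamanini,maggi}), $\partial E_Q$ is $C^{1,\gamma}$ outside a small singular set; combined with the Reifenberg flatness, which by \cite{gnrnote} already excludes singular points once $\delta_0$ is small, one gets that $\partial E_Q$ is everywhere $C^{1,\gamma}$, with $C^{1,\gamma}$ norm controlled by $C$ (hence independent of $Q$).

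For the convergence statement, the uniform $C^{1,\gamma}$ bound together with the $L^1$ estimate $|E_Q\Delta B_1|\les Q$ and the density estimates of Proposition \ref{prop:density} imply precompactness of $(E_Q)$ (up to translation) in the $C^{1,\beta}$ topology for any $\beta<\gamma$, by Ascoli--Arzelà applied to suitable graph parametrizations of $\partial E_Q$ over $\partial B_1$. Any subsequential limit has $|\cdot\,\Delta B_1|=0$ and $C^{1,\gamma}$ boundary, hence equals $B_1$, so the full family converges to $B_1$ in $C^{1,\beta}$ as $Q\to 0$. The essential technical work has already been carried out in Propositions \ref{prop:am}, \ref{prop:reg1} and Lemma \ref{lem:estmu}; the only point that requires a little care is making the choice of $\delta_0$, $r_\delta$ and $Q(\gamma,N)$ consistent so that all the ``$r\ll 1$'' smallness conditions in these statements are satisfied simultaneously on a scale independent of $Q$, which is what allows the constants in Tamanini's theorem to be uniform.
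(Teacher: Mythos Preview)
Your proposal is correct and follows exactly the route the paper indicates: combine Proposition~\ref{prop:reg1} (Reifenberg flatness for small $Q$), Proposition~\ref{prop:am} (second almost-minimality), and Lemma~\ref{lem:estmu} (integrability of $\mu_E$) to obtain classical $\omega$-almost-minimality with $\omega(r)=Cr^{2\gamma}$, then invoke Tamanini's theory and use Reifenberg flatness to rule out singular points. The paper itself does not write out a proof beyond this combination, so your added detail on the order in which $\delta_0$, $r_\delta$, and $Q(\gamma,N)$ are chosen, and on the $C^{1,\beta}$ compactness argument for the convergence, is a faithful expansion of what is implicit there.
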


% \subsection{Proof of Theorem \ref{thm:selection}}
% We conclude the section with the proof of Theorem \ref{thm:selection}.
% \begin{proof}[Proof of Theorem \ref{thm:selection}]
% 	By the quantitative isoperimetric inequality, and since $\E(E)\le\E(B)$ we get
% 	\[
% 	|E\Delta B|^2\le Q^2 \I_\alpha(B)\les Q^2.
% 	\]
% 	Thus $E$ is near in $L^1$ to $B$ for $Q$ small. It is a classical fact that the a.e.$-C^{1,\delta}-$regularity  of $\partial E$ provided by Proposition \ref{prop:am} and Lemma \ref{lem:estmu},  together with the uniform density estimates enjoyed by $E$ and the fact that $E\to B$ in $L^1$  lead to the conclusion (see for instance \cite{gnrI}).
% \end{proof}

%%%%%%%%%%%%%%%%%%%%%%%%%%%%%%%%%%%%%%%%%%%%%%%%
%%%%%%%%%%%%%%%%%%%%%%%%%%%%%%%%%%%%%%%%%%%%%%%%
%%%%%%%%%%%%%%%%%%%%%%%%%%%%%%%%%%%%%%%%%%%%%%%%
%%%%%%%%%%%%%%%%%%%%%%%%%%%%%%%%%%%%%%%%%%%%%%%%
%%%%%%%%%%%%%%%%%%%%%%%%%%%%%%%%%%%%%%%%%%%%%%%%
%%%%%%%%%%%%%%%%%%%%%%%%%%%%%%%%%%%%%%%%%%%%%%%%
%%%%%%%%%%%%%%%%%%%%%%%%%%%%%%%%%%%%%%%%%%%%%%%%

\section{ Rigidity of the ball for small charges }\label{sec:ballmin}
In this section we prove Theorem \ref{thm:nearlyspherical} i.e. we show that for every $\alpha\in (0,2)$ and small enough  charge $Q$, 
the ball is the unique minimizer of $\FaQ$ under volume constraints in the class of nearly spherical sets.

%\begin{theorem}\label{thm:rigidityball}
%There exists $Q_0>0$ such that if $Q<Q_0$ then the ball is the only minimizer of $\E$ among sets of measure $m$.
%\end{theorem}

Before embarking in the proofs let us set some notation and make a few preliminary remarks. First, recall that fixing an arbitrary $\gamma\in(0,1)$, we say that $E$ is nearly spherical if $|E|=|B|$ (where $B=B_1$ is the unit ball), $E$ has barycenter  in $0$ and there exists $\phi:\partial B\mapsto \R$ with 
$\|\phi\|_{C^{1,\gamma}(\partial B)}\le 1$ such that 
\[
 \partial E=\{(1+\phi(x)) x \ : \ x\in \partial B\}.
\]
With a slight abuse of notation  we still denote by $\phi$ its $0-$homogeneous extension outside $\partial B$ that is, the function $\R^N\ni x\mapsto \phi(x/|x|)$.
We recall from \cite{Fuglede} that if $E$ is nearly spherical, we have 
\begin{equation}\label{meanphi}
 \lt|\int_{\partial B} \phi\rt|\les \int_{\partial B} \phi^2. 
\end{equation}
In particular, if $\|\phi\|_{W^{1,\infty}(\partial B)}\ll1$, recalling the notation  $\bar \phi=\frac{1}{P(B)}\int_{\partial B} \phi$,  we have for $s\in(0,1)$,
\begin{equation}\label{eq:Poinca}
 \int_{\partial B} \phi^2 \les \int_{\partial B} (\phi-\bar \phi)^2\stackrel{\eqref{embed}}{\les} [\phi]^2_{H^s(\partial B)}.
\end{equation}
For $\mu$ and $\nu$ two Radon measures on $\R^N$, we define the positive bilinear operator (see \cite{landkof})
\[
I_\alpha(\mu,\nu)=\int_{\R^N\times \R^N}\frac{d\mu(x)\,d\nu(y)}{|x-y|^{N-\alpha}}.
\]
In particular we have $I_\alpha(\mu)=I_\alpha(\mu,\mu)$. We let $\mu_E$  be the optimal measure of $E$ and then $u_E(x)=\int_E \frac{d\mu_E}{|x-y|^{N-\alpha}}$ 
be its associated potential. When there is no risk of confusion we drop the index $E$ from both.  
In the specific case of the unit  ball $B$ we have by  \cite[Chapter II.13]{landkof}  
%(\textcolor{blue}{notice that in our first paper we made a little mistake in the exponent, putting $N-\alpha$ instead of $\alpha$})
\[
 \mu_B(x)=\frac{C_\alpha}{(1-|x|^2)^{\frac{\alpha}{2}}}\sim \frac{1}{d(x,\partial B)^{\frac{\alpha}{2}}}.
\]
 We sometimes use the notation $\phi_x=\phi(x)$. In particular, if $E$ is nearly spherical and $\phi$ is the corresponding parametrization, 
 we set for $x\in  B$, $T(x)=(1+\phi_x)x$ so that $E=T(B)$.  We then define $g=T^{-1}\#\mu_E$ (which is a probability measure on $B$) so that 
\begin{equation}\label{IE}
 \I_\alpha(E)=\int_{B\times B} \frac{dg_x\, dg_y}{|T(x)-T(y)|^{N-\alpha}}.
\end{equation}

We can now  begin the proofs. We first prove that $g$ has the same behavior as $\mu_B$ close to $\partial B$.
\begin{lemma}
Let $\alpha\in(0,2)$ and  $E$ be a nearly spherical set. Then its optimal measure satisfies for $x\in \partial B$
\begin{equation}\label{hypg}
 g(x)\les \frac{1}{d(x,\partial B)^{\frac{\alpha}{2}}}\sim \mu_B(x).
\end{equation}

\end{lemma}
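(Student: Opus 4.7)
The plan is to transport the problem to $E$ via the map $T$ and then establish the expected boundary blow-up of $\mu_E$ on $E$. First I would note that $T(x) = (1+\phi(x/|x|))x$ is bi-Lipschitz with $\|DT - \mathrm{Id}\|_{L^\infty} \les \|\phi\|_{C^{1,\gamma}(\partial B)}\ll 1$, so the Jacobian $|\det DT|$ is comparable to $1$ and $d(T(x),\partial E) \sim d(x,\partial B)$. Since $g = T^{-1}\#\mu_E$ has density $g(x) = \mu_E(T(x))|\det DT(x)|$, this reduces \eqref{hypg} to showing $\mu_E(y) \les d(y,\partial E)^{-\alpha/2}$ for $y$ in the interior of $E$.

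Next, I would exploit that $u_E \equiv \Ia(E)$ on $E$ (recall \eqref{eq:uconstantE}) to obtain a pointwise representation of $\mu_E$ in the interior of $E$. Applying the singular-integral formula \eqref{LaplacePV} together with the Euler--Lagrange identity \eqref{eq:ELuE} at an interior point $x$, the integrand vanishes on $E$ and one gets
\begin{equation*}
\mu_E(x) = \frac{C(N,\alpha/2)}{C'(N,\alpha)}\int_{E^c} \frac{w(y)}{|x-y|^{N+\alpha}}\,dy, \qquad w := \Ia(E)-u_E.
\end{equation*}
Here $w$ vanishes on $E$ (in particular on $\partial E$), is positive and $\alpha/2$-harmonic in $E^c$, is bounded above by $\Ia(E) \les 1$ via the Riesz maximum principle, and tends to $\Ia(E)$ at infinity.

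The third step is to invoke the boundary regularity theory for the fractional Laplacian on $C^{1,\gamma}$ domains developed in \cite{RosSer}, applied to $w$ on $E^c$: since $\partial E$ is uniformly of class $C^{1,\gamma}$ in the class of nearly spherical sets with $\|\phi\|_{C^{1,\gamma}(\partial B)}\le 1$, one obtains $w(y) \les d(y,\partial E)^{\alpha/2}$ near $\partial E$ with a uniform constant. Combined with the global bound $w \les 1$ and the elementary inequality $d(y,\partial E) \le |x-y|+\rho$ for $y \in E^c$ (where $\rho := d(x,\partial E)$), this gives $w(y) \les \min((|x-y|+\rho)^{\alpha/2},1)$. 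Since $|x-y|\ge \rho$ for every $y\in E^c$, plugging this bound into the above representation of $\mu_E(x)$ and splitting the integral over the regions $\rho\le|x-y|\le 1$ and $|x-y|\ge 1$ yields
\begin{equation*}
\mu_E(x) \les \int_{\rho}^{1} r^{-1-\alpha/2}\,dr + \int_{1}^{\infty} r^{-1-\alpha}\,dr \les \rho^{-\alpha/2},
\end{equation*}
which is the desired estimate.

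The main obstacle will be making the third step quantitative and uniform: one must ensure that the constants in the Ros-Oton--Serra boundary estimates depend only on the $C^{1,\gamma}$ modulus of $\partial E$, which is in turn controlled by $\|\phi\|_{C^{1,\gamma}(\partial B)}\le 1$. An alternative route that avoids this subtlety, exploiting the smallness of $\|\phi\|_{W^{1,\infty}}$, is to construct an explicit barrier from the known formula for $\mu_B$ on concentric balls sandwiching $E$, and to conclude by a maximum-principle argument for the Riesz potential.
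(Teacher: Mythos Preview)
Your proposal is correct and follows essentially the same route as the paper: both first establish $\mu_E(x)\les d(x,\partial E)^{-\alpha/2}$ by combining the singular-integral representation of $\mu_E$ in the interior of $E$ (from \eqref{eq:ELuE}, \eqref{LaplacePV}, and $u_E\equiv\Ia(E)$ on $E$) with the boundary estimate $\Ia(E)-u_E\les d(\cdot,\partial E)^{\alpha/2}$ from \cite{RosSer}, and then transfer the bound to $g$ via the change of variables $T$. The only cosmetic difference is that you deduce $d(T(x),\partial E)\sim d(x,\partial B)$ from the bi-Lipschitz property of $T$ (valid away from the origin, which is harmless since the estimate is trivial there), whereas the paper verifies this distance comparison by an explicit computation; your concern about uniformity of the constants in \cite{RosSer} is legitimate and is handled implicitly in the paper as well.
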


\begin{proof}

The proof resemble the proof of Lemma \ref{lem:estmu}  taking advantage of the regularity of $E$ to obtain a sharp estimate. We first show that $\mu=\mu_E$ satisfies
% by \eqref{toprovemu}  for $x\in E$ it holds
\begin{equation}\label{estimmu}
 \mu(x)\les \frac{1}{d(x,\partial E)^{\frac{\alpha}{2}}}.
\end{equation}
Recall that by  \eqref{eq:ELuE} and \eqref{eq:uconstantE}, 
\[
\begin{cases}
(-\Delta)^{\frac{\alpha}{2}}u(x)=0 & x\in E^c\\
u(x)-\Ia(E)=0&x\in E.
\end{cases} 
\]
Thus, by the boundary regularity theory for the fractional Laplacian developed in   \cite{RosSer}, 
\[
u(x)-\Ia(E)\lesssim d^{\frac{\alpha}{2}}(x,\partial E).
\]
Hence, arguing  as in the proof of Lemma \ref{lem:estmu} we compute for $x\in E$,
\begin{align*}
	C'(N,\alpha)\mu(x)&\stackrel{\eqref{eq:ELuE}}{=}(-\Delta)^{\frac{\alpha}{2}}u(x)\stackrel{\eqref{LaplacePV}}{=}C(N,\alpha/2)\int_{E^c}\frac{\mathcal I_\alpha(E)-u(y)}{|x-y|^{N+\frac{\alpha}{2}}}dy\\
	&\les   \int_{E^c}\frac{ d^{\frac{\alpha}{2}}(y,\partial E)}{|x-y|^{N+\alpha}}dy\les \int_{B_{d(x,\partial E)}^c(x)} \frac{dz}{|z|^{N+\alpha-\frac{\alpha}{2}}}\\
	&\les  d(x,\partial E)^{-\frac{ \alpha}{2}}.
\end{align*}
%If $x\in E\setminus B$, then \eqref{hypg} follows immediately as $d(x,\parti).

\noindent Since $g(x)= (1+\phi_x)^N \mu( (1+\phi_x) x)$ with $|\phi|\le1/4$, up to chose $\delta$ small enough, we obtain
\[
g(x)\les\mu( (1+\phi_x) x)\les \frac{1}{d^{\frac{\alpha}{2}}((1+\phi_x) x,{\partial E})}.
\]
Thus   \eqref{hypg} follows provided  
\begin{equation}\label{claimeq}
 d( (1+\phi_x)x,\partial E)\sim |1-|x||.
\end{equation}
Let us prove \eqref{claimeq}. Since 
\[
 d( (1+\phi_x)x,\partial E)=\min_{y\in \partial B} |(1+\phi_x)x-(1+\phi_y)y|,
\]
testing with $y=\frac{x}{|x|}$ we obtain the upper bound %(unfortunately we need the lower bound)
\[
 d( (1+\phi_x)x,\partial E)\le (1+\phi_x)| 1-|x||\les |1-|x||.
\]
To ge the   lower bound we may assume that $|1-|x||\ll1$.  Squaring we  get 
\begin{align*}
 \lefteqn{ d^2( (1+\phi_x)x,\partial E)}\\
 &=\min_{y\in \partial B} |(1+\phi_x)x-(1+\phi_y)y|^2\\
  &=\min_{y\in \partial B} \lt\{(1+\phi_x)^2|x|^2 -2 (1+\phi_x)(1+\phi_y) x\cdot y + |1+\phi_y|^2\rt\}\\
  &=\min_{y\in \partial B} \lt\{(1+\phi_x)^2|x|^2 -2 (1+\phi_x)(1+\phi_y) |x| + |1+\phi_y|^2+ 2(1+\phi_x)(1+\phi_y)x\cdot \lt(\frac{x}{|x|}-  y\rt) \rt\}\\
  &=\min_{y\in \partial B}\lt\{ (1+\phi_x)^2 \lt||x|- \frac{1+\phi_y}{1+\phi_x}\rt|^2+ 2 (1+\phi_x)(1+\phi_y)  (|x|-y\cdot x)\rt\}\\
  &\ges \min_{y\in \partial B}\lt\{\lt||x|- 1 +\frac{\phi_x-\phi_y}{1+\phi_x}\rt|^2+ (|x|-y\cdot x)\rt\}.
\end{align*}
Now for every $y$, either $(|x|-y\cdot x)\ges ||x|-1|^2$ or $(|x|-y\cdot x)\ll | |x|-1|^2$. The first case directly leads to the conclusion of the proof of \eqref{claimeq}. 
In the second case, writing that $x=r\sigma$ with $\sigma\in \partial B$, this means that 
$|\sigma-y|^2=\frac{1}{2r}(|x|-x\cdot y)\ll ||x|-1|^2$ and thus $|\phi_x-\phi_y|\les |\sigma-y|\ll||x|-1|$ from which we find that for every $y\in \partial B$,
\[
 \lt||x|- 1 +\frac{\phi_x-\phi_y}{1+\phi_x}\rt|^2+ (|x|-y\cdot x)\ges | |x|-1|^2
\]
and the claim follows as well.
\end{proof}

Next we state and prove two  lemmas giving the Taylor expansion of the term $|T(x)-T(y)|^{-(N-\alpha)}$ appearing in \eqref{IE}.

\begin{lemma}
	For $x,y\in B$, we have 
	\begin{equation}\label{square}
	|T(x)-T(y)|^2=|x-y|^2\lt(1+\phi_x+\phi_y+\phi_x\phi_y +\psi(x,y)\rt)
	\end{equation}
	where 
	\begin{equation}\label{psi}
	\psi(x,y)= \frac{1}{2}(|x|^2+|y|^2)\lt(\frac{\phi_x-\phi_y}{|x-y|}\rt)^2+ (|x|+|y|)\lt(1-\frac{1}{2}(\phi_x+\phi_y)\rt)\frac{\phi_x-\phi_y}{|x-y|}.
	\end{equation}
	
\end{lemma}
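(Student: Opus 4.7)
The identity \eqref{square}--\eqref{psi} is a purely algebraic statement, so the plan is just a careful expansion. The starting point is the decomposition
\[
T(x)-T(y) = (x-y) + (\phi_x x - \phi_y y),
\]
which gives
\[
|T(x)-T(y)|^2 = |x-y|^2 + 2(x-y)\cdot(\phi_x x-\phi_y y) + |\phi_x x-\phi_y y|^2.
\]
I would compute each of the two non-trivial pieces using the polar identity $x\cdot y = \tfrac{1}{2}(|x|^2+|y|^2-|x-y|^2)$.

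For the cross term, expanding directly yields
\[
(x-y)\cdot(\phi_x x - \phi_y y) = \phi_x|x|^2 + \phi_y|y|^2 - (\phi_x+\phi_y)\,x\cdot y = \tfrac{1}{2}(\phi_x+\phi_y)|x-y|^2 + \tfrac{1}{2}(\phi_x-\phi_y)(|x|^2-|y|^2).
\]
For the quadratic term, the convenient move is the symmetric splitting $\phi_x x-\phi_y y = \tfrac{1}{2}(\phi_x+\phi_y)(x-y) + \tfrac{1}{2}(\phi_x-\phi_y)(x+y)$, after which one expands the square and uses $(x-y)\cdot(x+y)=|x|^2-|y|^2$ together with $|x+y|^2 = 2(|x|^2+|y|^2)-|x-y|^2$. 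The outcome is an expression linear in $|x-y|^2$, in $|x|^2-|y|^2$, and in $|x|^2+|y|^2$, with coefficients polynomial in $\phi_x,\phi_y$.

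Collecting all contributions, the pure $|x-y|^2$-coefficient should consolidate to $1+(\phi_x+\phi_y)+\phi_x\phi_y = (1+\phi_x)(1+\phi_y)$, thanks to the elementary identity
\[
\tfrac{1}{4}(\phi_x+\phi_y)^2 - \tfrac{1}{4}(\phi_x-\phi_y)^2 = \phi_x\phi_y.
\]
The remaining $(|x|^2-|y|^2)$ terms factor with $1+\tfrac{1}{2}(\phi_x+\phi_y)$, and the $(\phi_x-\phi_y)^2(|x|^2+|y|^2)$ contribution produces the first summand of $\psi$ after dividing by $|x-y|^2$; together these give \eqref{psi}. There is no genuine obstacle here: the whole argument is just careful bookkeeping, with the only subtle point being the step where the quadratic-in-$\phi$ pieces coming from $|\phi_x x-\phi_y y|^2$ must be split, via the identity above, into the contribution $\phi_x\phi_y|x-y|^2$ absorbed in the leading factor $(1+\phi_x)(1+\phi_y)$ and the leftover terms that are packaged into $\psi(x,y)$.
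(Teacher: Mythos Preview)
Your approach is essentially identical to the paper's: both write $T(x)-T(y)=(x-y)+\tfrac12(\phi_x+\phi_y)(x-y)+\tfrac12(\phi_x-\phi_y)(x+y)$, expand the square, and use $\tfrac14(\phi_x+\phi_y)^2-\tfrac14(\phi_x-\phi_y)^2=\phi_x\phi_y$ together with $|x+y|^2=2(|x|^2+|y|^2)-|x-y|^2$ to isolate the $\phi_x\phi_y\,|x-y|^2$ piece. Your observation that the $(|x|^2-|y|^2)$ terms carry the factor $1+\tfrac12(\phi_x+\phi_y)$ is exactly what the paper's own expansion produces as well; the printed $\psi$ with $(|x|+|y|)$ and a minus sign appears to contain typos, which are harmless for the subsequent estimates.
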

\begin{proof}
	Expanding the squares we get 
	\begin{align*}
	|T(x)-T(y)|^2&=|(x-y)+ \frac{1}{2}( (x+y)(\phi_x-\phi_y)+ (x-y)(\phi_x+\phi_y))|^2\\
	&=|x-y|^2+ (|x|^2-|y|^2)(\phi_x-\phi_y)+|x-y|^2(\phi_x+\phi_y)\\
	&\qquad + \frac{1}{4}|x+y|^2|\phi_x-\phi_y|^2+ \frac{1}{4}|x-y|^2|\phi_x+\phi_y|^2\\
	&\qquad + \frac{1}{2}(|x|^2-|y|^2)(\phi_x^2-\phi_y^2). 
	\end{align*}
	Comparing with \eqref{square} we are left with the proof of
	\[
	\frac{1}{4}|x+y|^2|\phi_x-\phi_y|^2+ \frac{1}{4}|x-y|^2|\phi_x+\phi_y|^2=\frac{1}{2} (|x|^2+|y|^2)(\phi_x-\phi_y)^2 + |x-y|^2 \phi_x\phi_y.
	\]
	For this we write that $(\phi_x+\phi_y)^2=(\phi_x-\phi_y)^2+4 \phi_x\phi_y$ to get 
	\begin{align*}
	\frac{1}{4}|x+y|^2|\phi_x-\phi_y|^2+ \frac{1}{4}|x-y|^2|\phi_x+\phi_y|^2&= \frac{1}{4}|\phi_x-\phi_y|^2( |x+y|^2+|x-y|^2) + |x-y|^2 \phi_x\phi_y\\
	&= \frac{1}{2}  (|x|^2+|y|^2)(\phi_x-\phi_y)^2 + |x-y|^2 \phi_x\phi_y.
	\end{align*}
	
\end{proof}
As a consequence we get the following Taylor expansion of $|T(x)-T(y)|^{-(N-\alpha)}$.
\begin{lemma}\label{lem:TaylorT}
	Set $\ha=N-\alpha$. If $\|\phi\|_{W^{1,\infty}(\partial B)}\ll1$ then for $x,y\in B$, 
	\begin{equation}\label{Taylorgamma}
	|T(x)-T(y)|^{-(N-\alpha)}=|x-y|^{-(N-\alpha)}\lt((1-\frac{\ha}{2}\phi_x)(1-\frac{\ha}{2}\phi_y)-\frac{\ha}{2} \psi(x,y) + \zeta(x,y)\rt)
	\end{equation}
	where 
	\begin{equation}\label{zeta}
	|\zeta(x,y)|\les \phi_x^2+\phi_y^2+\psi^2(x,y),
	\end{equation}
	and where $\psi$ is the function defined in \eqref{psi}.
\end{lemma}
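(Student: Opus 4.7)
The plan is to apply a first-order Taylor expansion of the scalar function $s\mapsto (1+s)^{-\ha/2}$ to the identity from the preceding lemma. Raising \eqref{square} to the power $-\ha/2$ gives
\[
 |T(x)-T(y)|^{-(N-\alpha)}=|x-y|^{-(N-\alpha)}\bigl(1+s(x,y)\bigr)^{-\ha/2},\qquad s(x,y)=\phi_x+\phi_y+\phi_x\phi_y+\psi(x,y),
\]
so the whole content of the lemma is a second-order Taylor expansion of $(1+s)^{-\ha/2}$ around $s=0$, combined with regrouping the linear terms to exhibit the factored structure $(1-\tfrac{\ha}{2}\phi_x)(1-\tfrac{\ha}{2}\phi_y)$.

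The first step is a smallness check: one needs $|s|\ll 1$ so that the Taylor expansion is licit. The pointwise bound $|\phi_x|,|\phi_y|\ll 1$ is immediate from $\|\phi\|_{W^{1,\infty}(\partial B)}\ll 1$, while bounding $\psi$ via its definition \eqref{psi} reduces to bounding $|(\phi_x-\phi_y)/|x-y||$; using that $\phi$ is the $0$-homogeneous extension of a function with small Lipschitz norm on $\partial B$, this ratio is controlled by $\|\phi\|_{W^{1,\infty}(\partial B)}\ll1$, so $|\psi(x,y)|\ll 1$ as well.

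Next, write $(1+s)^{-\ha/2}=1-\tfrac{\ha}{2}s+R(s)$ with $|R(s)|\lesssim s^2$. Expanding the targeted principal term
\[
 \Bigl(1-\tfrac{\ha}{2}\phi_x\Bigr)\Bigl(1-\tfrac{\ha}{2}\phi_y\Bigr)-\tfrac{\ha}{2}\psi=1-\tfrac{\ha}{2}(\phi_x+\phi_y+\psi)+\tfrac{\ha^2}{4}\phi_x\phi_y,
\]
and subtracting from the Taylor expansion of $(1+s)^{-\ha/2}$, the two linear parts in $\phi_x+\phi_y+\psi$ cancel, and the residue
\[
 \zeta(x,y)=-\tfrac{\ha}{2}\phi_x\phi_y-\tfrac{\ha^2}{4}\phi_x\phi_y+R(s)
\]
emerges. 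Using the elementary bounds $|\phi_x\phi_y|\le \tfrac12(\phi_x^2+\phi_y^2)$ and $s^2\lesssim \phi_x^2+\phi_y^2+(\phi_x\phi_y)^2+\psi^2\lesssim \phi_x^2+\phi_y^2+\psi^2$ (using again $|\phi_x|,|\phi_y|\le 1$) gives exactly \eqref{zeta}.

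The only nontrivial point is the verification that $|\psi(x,y)|\ll 1$, since the other ingredients are purely algebraic. The delicate case is when $x,y\in B$ are close to the origin, where the $0$-homogeneous extension has a large gradient; however, this regime contributes only through the factors $|x|+|y|$ and $|x|^2+|y|^2$ multiplying the difference quotient in \eqref{psi}, and a careful estimate shows that these factors absorb the blow-up of the difference quotient, giving $|\psi(x,y)|\lesssim \|\phi\|_{W^{1,\infty}(\partial B)}$ uniformly in $x,y\in B$. Once this is in place, the rest of the argument is a direct scalar computation.
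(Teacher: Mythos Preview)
Your approach is correct and essentially identical to the paper's: both establish $\|\psi\|_{L^\infty}\ll 1$ first and then invoke the scalar Taylor expansion of $(1+s)^{-\ha/2}$. The paper is terser on the algebra (it just says ``by \eqref{square} and Taylor expansion'') but makes the key geometric estimate explicit via the inequality
\[
 \frac{|x|+|y|}{|x-y|}\les \frac{1}{\bigl|\tfrac{x}{|x|}-\tfrac{y}{|y|}\bigr|} +1,
\]
which is precisely the ``careful estimate'' you allude to in your last paragraph; note that your earlier claim that the bare difference quotient $|(\phi_x-\phi_y)/|x-y||$ is controlled by $\|\phi\|_{W^{1,\infty}(\partial B)}$ is not literally true (as you yourself observe later), and it is only the combination with the prefactors $|x|+|y|$, $|x|^2+|y|^2$ that is bounded.
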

\begin{proof}
	Let us first point out that under our hypothesis we have $\|\psi\|_{L^\infty}\ll 1$. Indeed, this follows from $\|\phi\|_{W^{1,\infty}(\partial B)}\ll1$ and  
	\begin{equation}\label{difsphere}
	\frac{|x|+|y|}{|x-y|}\les \frac{1}{|\frac{x}{|x|}-\frac{y}{|y|}|} +1.
	\end{equation}
	This inequality may be easily seen using for instance polar coordinates. That is if $x=r\sigma$ and $y=s v$ then 
	\begin{multline*}
	\lt(\frac{|x|+|y|}{|x-y|}\rt)^2= \frac{ r^2+s^2}{|r-s|^2 + rs |\sigma-v|^2}= \frac{|r-s|^2}{|r-s|^2 + rs |\sigma-v|^2}+\frac{2 rs}{|r-s|^2 + rs |\sigma-v|^2}\\
	\le 1+ \frac{2}{|\sigma-v|^2}.
	\end{multline*}
	We then obtain the result by \eqref{square} and Taylor expansion.
\end{proof}

The next result contains one of the key linearization estimates we need to obtain our rigidity result.

\begin{lemma}
 Let $E$ be a nearly spherical set with $\| \phi\|_{W^{1,\infty}(\partial B)}\ll1$.  Then  for every $\alpha\in(0,2)$ 
 \begin{equation}\label{TaylorI}
  \lt|\I_\alpha(E)-I_\alpha\left( (1-\frac{\ha}{2}\phi)g\right)\rt|\les[\phi]_{H^{\frac{2-\alpha}{2}}(\partial B)}^2, 
 \end{equation}
where $(1-\frac{\ha}{2}\phi)g$ is seen as a measure on $B$ (recall that $\phi$ is extended by  $0-$homogeneity on $\R^N$) and $\ha=N-\alpha$.
\end{lemma}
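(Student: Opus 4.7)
The plan is to substitute the pointwise expansion \eqref{Taylorgamma} of Lemma~\ref{lem:TaylorT} into the representation \eqref{IE}. This yields the identity
\[
\Ia(E)-I_\alpha\bigl((1-\tfrac{\ha}{2}\phi)g\bigr) \;=\; -\frac{\ha}{2}R_\psi \;+\; R_\zeta,
\]
where $R_\psi$ and $R_\zeta$ denote the integrals of $\psi$ and $\zeta$ respectively against $|x-y|^{\alpha-N}dg(x)dg(y)$. Proving \eqref{TaylorI} thus reduces to bounding each of these two remainders by $[\phi]^2_{H^{(2-\alpha)/2}(\partial B)}$.

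For $R_\psi$ I would split $\psi=\psi_1+\psi_2$ along the two summands in \eqref{psi}. The linear-in-gradient piece $\psi_2$ is antisymmetric in $(x,y)$ because $\tfrac{\phi_x-\phi_y}{|x-y|}$ is antisymmetric while the remaining factor $(|x|+|y|)(1-\tfrac12(\phi_x+\phi_y))$ is symmetric, so its contribution vanishes after integration against the symmetric kernel $|x-y|^{\alpha-N}$ and the symmetric measure $dg\otimes dg$. The surviving symmetric piece $\psi_1$ is controlled, using $|x|^2+|y|^2\le 2$, by
\[
\int_{B\times B}\frac{(\phi_x-\phi_y)^2}{|x-y|^{N-\alpha+2}}\,dg(x)\,dg(y).
\]
Since $\phi$ is $0$-homogeneous, polar coordinates $x=r\sigma$, $y=sv$ combined with the density estimate \eqref{hypg} reduce this to showing that, for each $\sigma,v\in\partial B$ with $\rho=|\sigma-v|$, the radial kernel $K(\rho)$ obtained by integrating out $(r,s)$ against $r^{N-1}s^{N-1}(1-r)^{-\alpha/2}(1-s)^{-\alpha/2}$ satisfies $K(\rho)\lesssim \rho^{-(N+1-\alpha)}$; then \eqref{HsSphere} produces exactly $[\phi]^2_{H^{(2-\alpha)/2}(\partial B)}$. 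The remainder $R_\zeta$ is handled via \eqref{zeta}: the $\phi_x^2+\phi_y^2$ piece factorizes and is bounded by $\|u_g\|_{L^\infty}\int_B\phi^2\,dg$, where the potential $u_g(x)=\int|x-y|^{\alpha-N}dg(y)$ is uniformly bounded thanks to \eqref{hypg} (as for the ball), while $\int_B\phi^2\,dg\lesssim \int_{\partial B}\phi^2$ follows from the $0$-homogeneity of $\phi$ and the boundedness of the radial marginal of $g$; the conclusion then comes from \eqref{meanphi} and \eqref{eq:Poinca}. The $\psi^2$ contribution is of higher order ($|\psi|\lesssim \|\phi\|_{W^{1,\infty}}$ pointwise) and is absorbed into the bound on $R_\psi$ thanks to $\|\phi\|_{W^{1,\infty}}\ll 1$.

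The main technical step is the kernel inequality $K(\rho)\lesssim \rho^{-(N+1-\alpha)}$. The rescaling $(1-r,1-s)=\rho(u,w)$ exhibits the correct scaling and turns the claim into the finiteness of a dimensionless double integral; carrying this out cleanly across the whole range $\alpha\in(0,2)$ is the most delicate point, and for $\alpha$ close to $2$, where the singularity of $g$ at $\partial B$ is strongest, one likely has to exploit also the Poincar\'e-type bound \eqref{eq:Poinca}, so that $[\phi]^2_{H^{(2-\alpha)/2}(\partial B)}$ can absorb an auxiliary $\int_{\partial B}\phi^2$ contribution arising from the most singular part of the radial integration.
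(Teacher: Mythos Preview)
Your proposal is correct and follows essentially the paper's proof: reduction via \eqref{Taylorgamma} and \eqref{IE} to the $\psi$ and $\zeta$ integrals, antisymmetry to kill the linear-in-$(\phi_x-\phi_y)$ part of $\psi$, then \eqref{hypg} together with polar coordinates to reduce the quadratic part to a radial kernel bound (the paper's estimate $F(\theta)\lesssim\theta^{-(N-\alpha-1)}$, which is equivalent to your $K(\rho)\lesssim\rho^{-(N+1-\alpha)}$ after the paper applies \eqref{difsphere}), and \eqref{Iphi2}/\eqref{eq:Poinca} for the $\phi^2$ remainders. Your observation that the \emph{whole} second summand $\psi_2$ of \eqref{psi} is antisymmetric is slightly cleaner than the paper's route, which cancels only its leading piece by symmetry and absorbs the $(\phi_x+\phi_y)$-correction via Young's inequality, but the substantive analysis---in particular the delicate kernel estimate---is the same.
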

\begin{proof}
 In view of \eqref{Taylorgamma} and \eqref{IE}, it is enough to prove that 
 \begin{equation}\label{toproveTaylorI}
  \lt|\int_{B\times B} \frac{\psi(x,y)}{|x-y|^{N-\alpha} } dg_x dg_y\rt|+ \lt|\int_{B\times B} \frac{\zeta(x,y)}{|x-y|^{N-\alpha} } dg_x dg_y\rt|\les [\phi]_{H^{\frac{2-\alpha}{2}}(\partial B)}^2.
 \end{equation}
Recall that from the proof of Lemma \ref{lem:TaylorT},  $\|\phi\|_{L^\infty(\partial B)}\ll1 $ implies $\|\psi\|_{L^\infty}\ll1$. 
Moreover,  by the radial symmetry of $\mu_B$ and the $0-$homogeneity of $\phi$ we have  
\begin{equation}\label{Iphi2}
 \int_{B\times B} \frac{\phi_x^2}{|x-y|^{N-\alpha}} dg_xdg_y\stackrel{\eqref{hypg}}{\les} \int_{B\times B} \phi_x^2 \frac{d\mu_B(x) d\mu_B(y)}{|x-y|^{N-\alpha}}
 =\frac{I_\alpha(B)}{\H^{N-1}(\partial B)}\int_{\partial B} \phi^2\stackrel{\eqref{eq:Poinca}}{\les}[\phi]_{H^{\frac{2-\alpha}{2}}(\partial B)}^2. 
\end{equation}
Hence, by \eqref{zeta} we are left with the proof of 
\[
 \lt|\int_{B\times B} \frac{\psi(x,y)}{|x-y|^{N-\alpha} } dg_x dg_y\rt|\les [\phi]_{H^{\frac{2-\alpha}{2}}(\partial B)}^2.
\]
By symmetry in $x$ and $y$ we have 
\[
 \int_{B\times B}  (\phi_x-\phi_y)\frac{|x|+|y|}{|x-y|^{N-\alpha} }dg_x dg_y=0.
\]
Moreover, Young inequality yields
\[
 (|x|+|y|)|\phi_x+\phi_y|\frac{|\phi_x-\phi_y|}{|x-y|}\les (|x|^2+|y|^2)\lt(\frac{\phi_x-\phi_y}{|x-y|}\rt)^2 + \phi_x^2+\phi_y^2
\]
so that by \eqref{Iphi2} and the definition \eqref{psi} of $\psi$, we just need to prove
\[
 \int_{B\times B} (|x|^2+|y|^2)\lt(\frac{\phi_x-\phi_y}{|x-y|}\rt)^2 \frac{1}{|x-y|^{N-\alpha}} dg_x dg_y\les [\phi]_{H^{\frac{2-\alpha}{2}}(\partial B)}^2.
\]
Using \eqref{difsphere} and \eqref{hypg} we have 
\begin{multline*}
{\int_{B\times B} (|x|^2+|y|^2)\lt(\frac{\phi_x-\phi_y}{|x-y|}\rt)^2 \frac{1}{|x-y|^{N-\alpha}} dg_x dg_y}\\
 \les \int_{B\times B}\lt[ \lt(\frac{\phi_x-\phi_y}{|\frac{x}{|x|}-\frac{y}{|y|}|}\rt)^2+ (\phi_x-\phi_y)^2\rt]\frac{ d\mu_B(x)d\mu_B(y)}{|x-y|^{N-\alpha}}\\
 \les\int_{B\times B} \lt(\frac{\phi_x-\phi_y}{|\frac{x}{|x|}-\frac{y}{|y|}|}\rt)^2\frac{ d\mu_B(x)d\mu_B(y)}{|x-y|^{N-\alpha}} + \int_{\partial B} \phi^2,
\end{multline*}
where we used Young inequality and formula \eqref{Iphi2} to estimate the second term in the last inequality. Thanks to \eqref{eq:Poinca},  we may further reduce the proof of  \eqref{TaylorI} to 
\begin{equation}\label{reduction}
 \int_{B\times B} \lt(\frac{\phi_x-\phi_y}{|\frac{x}{|x|}-\frac{y}{|y|}|}\rt)^2\frac{ d\mu_B(x)d\mu_B(y)}{|x-y|^{N-\alpha}} \les[\phi]_{H^{\frac{2-\alpha}{2}}(\partial B)}^2. 
\end{equation}%\textcolor{blue}{Disclaimer this first simple proof is not going to work:} 
Recalling that $\mu_B(x)\les (1-|x|)^{-\frac{\alpha}{2}}$ and writing $x$ and $y$ in polar coordinates $x=r\sigma$ and $y=sv$, with $r,s\in\R$ and $\sigma, v\in\partial B$, we get
\begin{multline*}
 \int_{B\times B} \lt(\frac{\phi_x-\phi_y}{|\frac{x}{|x|}-\frac{y}{|y|}|}\rt)^2\frac{ d\mu_B(x)d\mu_B(y)}{|x-y|^{N-\alpha}}\\
 \les\int_{\partial B\times \partial B} \lt(\frac{\phi(\sigma)-\phi(v)}{|\sigma-v|}\rt)^2\lt[\int_0^1\int_0^1 \frac{r^{N-1}s^{N-1}}{|1-r|^{\frac{\alpha}{2}}|1-s|^{\frac{\alpha}{2}}} \frac{dr ds}{(|r-s|^2 +rs |\sigma-v|^2)^{\frac{N-\alpha}{2}}}\rt] d\sigma dv\\
 =\int_{\partial B\times \partial B} \lt(\frac{\phi(\sigma)-\phi(v)}{|\sigma-v|}\rt)^2 F(|\sigma-v|) d\sigma dv
\end{multline*}
where 
\[
 F(\theta)=\int_0^1\int_0^1 \frac{r^{N-1}s^{N-1}}{|1-r|^{\frac{\alpha}{2}}|1-s|^{\frac{\alpha}{2}}} \frac{dr ds}{(|r-s|^2 +rs \theta^2)^{\frac{N-\alpha}{2}}}.
\]
We claim that for $0<\theta\le 2$,
\begin{equation}\label{eq:claimF}
 F(\theta)\les \frac{1}{\theta^{N-\alpha-1}}.
\end{equation}
It is enough to prove this estimate for $\theta\ll1$. To this aim we first estimate
\begin{align*}
 \int_0^{\frac{1}{2}}\int_0^1 \frac{r^{N-1}s^{N-1}}{|1-r|^{\frac{\alpha}{2}}|1-s|^{\frac{\alpha}{2}}} \frac{dr ds}{(|r-s|^2 +rs \theta^2)^{\frac{N-\alpha}{2}}}&\les \int_0^{\frac{1}{2}} r^{N-1}\lt[\int_0^1 \frac{1}{|1-s|^{\frac{\alpha}{2}}} \frac{ds}{(|r-s|^2 +rs \theta^2)^{\frac{N-\alpha}{2}}}\rt] dr\\
 &\les \int_0^{\frac{1}{2}} r^{N-1}\lt[\int_0^{\frac{3}{4}} \frac{ds}{(|r-s|^2 +rs \theta^2)^{\frac{N-\alpha}{2}}}\rt] dr\\
 &\qquad +\int_0^{\frac{1}{2}} \lt[\int_{\frac{3}{4}}^1 \frac{ds}{|1-s|^{\frac{\alpha}{2}}}\rt] dr\\
 &\les \int_0^{\frac{1}{2}} r^{N-1}\lt[\int_{\R} \frac{dt}{(t^2 +r^2 \theta^2 +r t\theta^2)^{\frac{N-\alpha}{2}}}\rt] dr +1,
\end{align*}
where in the last line we did the change of variables $s=r+t$. We now write that by the change of variables $t= r\theta s$,
\begin{multline*}
 \int_{\R} \frac{dt}{(t^2 +r^2 \theta^2 +r t\theta^2)^{\frac{N-\alpha}{2}}}= \frac{1}{(r\theta)^{N-\alpha-1}}\int_{\R} \frac{ds}{(s^2 +1+\theta s)^{\frac{N-\alpha}{2}}}\\
 \les  \frac{1}{(r\theta)^{N-\alpha-1}}\int_{\R} \frac{ds}{((s+1)^2+|s|)^{\frac{N-\alpha}{2}}}\les \frac{1}{(r\theta)^{N-\alpha-1}},
\end{multline*}
where we used that since $\theta\le 1$, $s^2 +1+\theta s\ges (s+1)^2 +|s|$. We thus conclude that 
\begin{equation}\label{eq:integ1}
 \int_0^{\frac{1}{2}}\int_0^1 \frac{r^{N-1}s^{N-1}}{|1-r|^{\frac{\alpha}{2}}|1-s|^{\frac{\alpha}{2}}} \frac{dr ds}{(|r-s|^2 +rs \theta^2)^{\frac{N-\alpha}{2}}}\les 1+\frac{1}{\theta^{N-\alpha-1}}\int_0^{\frac{1}{2}} r^\alpha dr\les \frac{1}{\theta^{N-\alpha-1}}.
\end{equation}
We now focus on the integral between $1/2$ and $1$ which we split as 
\begin{align*}
 \int_{\frac{1}{2}}^1\int_0^1 \frac{r^{N-1}s^{N-1}}{|1-r|^{\frac{\alpha}{2}}|1-s|^{\frac{\alpha}{2}}} \frac{dr ds}{(|r-s|^2 +rs \theta^2)^{\frac{N-\alpha}{2}}}&\les\int_{\frac{1}{2}}^1\int_0^{\frac{1}{4}} \frac{1}{|1-r|^{\frac{\alpha}{2}}}dr ds\\
 &\qquad +\int_{\frac{1}{2}}^1\int_{\frac{1}{4}}^1 \frac{1}{|1-r|^{\frac{\alpha}{2}}|1-s|^{\frac{\alpha}{2}}} \frac{dr ds}{(|r-s|^2 + \theta^2)^{\frac{N-\alpha}{2}}}\\
 &\les 1+\int_0^{\frac{1}{2}} \int_{-1}^1 \frac{1}{t^{\frac{\alpha}{2}} |t-w|^{\frac{\alpha}{2}}} \frac{dt dw}{(w^2+\theta^2)^{\frac{N-\alpha}{2}}},
\end{align*}
where in the last line we made the change of variables $r=1-t$ and $s=1-t+w$. Now for every $w\in (-1,1)$, 
\[
 \int_0^{\frac{1}{2}}\frac{dt}{t^{\frac{\alpha}{2}} |t-w|^{\frac{\alpha}{2}}}\le  \int_0^{\frac{1}{2}}\frac{dt}{t^\alpha}+ \int_0^{\frac{1}{2}}\frac{dt}{|t-w|^{\alpha}}\les 1
\]
and thus 
\[
 \int_0^{\frac{1}{2}} \int_{-1}^1 \frac{1}{t^{\frac{\alpha}{2}} |t-w|^{\frac{\alpha}{2}}} \frac{dt dw}{(w^2+\theta^2)^{\frac{N-\alpha}{2}}}\les 
 \int_{-1}^1 \frac{ dw}{(w+\theta)^{N-\alpha}}\les \frac{1}{\theta^{N-\alpha-1}}.
\]
This proves 
\[
 \int_{\frac{1}{2}}^1\int_0^1 \frac{r^{N-1}s^{N-1}}{|1-r|^{\frac{\alpha}{2}}|1-s|^{\frac{\alpha}{2}}} \frac{dr ds}{(|r-s|^2 +rs \theta^2)^{\frac{N-\alpha}{2}}}\les \frac{1}{\theta^{N-\alpha-1}},
\]
which together with \eqref{eq:integ1} concludes the proof of \eqref{eq:claimF}.
We thus find
\[
  \int_{B\times B} \lt(\frac{\phi_x-\phi_y}{|\frac{x}{|x|}-\frac{y}{|y|}|}\rt)^2\frac{ d\mu_B(x)d\mu_B(y)}{|x-y|^{N-\alpha}}
  \les \int_{\partial B\times \partial B} \frac{(\phi(\sigma)-\phi(v))^2}{|\sigma-v|^{N-\alpha +1}} d\sigma dv\stackrel{\eqref{HsSphere}}{=} [\phi]_{H^{\frac{2-\alpha}{2}}(\partial B)}^2,
\]
which is \eqref{reduction}.
\end{proof}
% \begin{remark}
% Notice that formally letting $\lambda\to 1$ in the previous proof, (for which the constants in the estimates explode), we find 
% \[
% \int_{B\times B} \lt(\frac{\phi_x-\phi_y}{|\frac{x}{|x|}-\frac{y}{|y|}|}\rt)^2\frac{ d\mu_B(x)d\mu_B(y)}{|x-y|^{N-\alpha}}\les \|\phi\|^2_{H^{\frac{1-\alpha}{2}(\partial B)}} 
% \]
% which is exactly the same estimate found in 
% \cite[Lem. 5.3]{F2M3} when dealing with the Gamow charged drop energy.
% 
% \end{remark}

We may now conclude the proof of the stability inequality for nearly spherical sets. %The proof follows the same line of argument as those in \cite[Proposition 5.5]{gnrI} \textcolor{blue}{M: dire che comunque non e banale l estensione?}
\begin{proposition}\label{prop:comparison}
 If $E$ is a nearly spherical set with $\|\phi\|_{W^{1,\infty}(\partial B)}\ll1$, then for $\alpha \in (0,2)$,
 \begin{equation}\label{quantI}
  \I_\alpha(B)-\I_\alpha(E)\les [\phi]^2_{H^{\frac{\alpha}{2}}(\partial B)}+[\phi]_{H^{\frac{2-\alpha}{2}}(\partial B)}^2.
 \end{equation}
 As a consequence,
  \begin{equation}\label{quantI2}
  \I_\alpha(B)-\I_\alpha(E)\les P(E)-P(B).
 \end{equation}
\end{proposition}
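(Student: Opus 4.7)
The plan is to combine the Taylor reduction \eqref{TaylorI} with a renormalized competitor for $\Ia(B)$. Since $\|\phi\|_\infty\ll 1$, the measure $(1-\tfrac{\ha}{2}\phi)g$ is positive, with total mass $m:=1-\tfrac{\ha}{2}\bar\phi_g$ where $\bar\phi_g:=\int\phi\,dg$. Its normalization $(1-\tfrac{\ha}{2}\phi)g/m$ is therefore an admissible probability measure on $B$, and the optimality of $\mu_B$ gives $m^2\Ia(B)\le I_\alpha((1-\tfrac{\ha}{2}\phi)g)$. Combining this bound with \eqref{TaylorI} yields
\[
\Ia(B)-\Ia(E)\le (1-m^2)\Ia(B)+C[\phi]^2_{H^{(2-\alpha)/2}(\partial B)}.
\]

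Since $1-m^2=\ha\bar\phi_g+O(\bar\phi_g^2)$ and the conclusion is trivial when $\bar\phi_g\le 0$, the main reduction is to show $\bar\phi_g\les [\phi]^2_{H^{\alpha/2}(\partial B)}+[\phi]^2_{H^{(2-\alpha)/2}(\partial B)}$. I would split $\phi=\bar\phi+\tilde\phi$ with $\tilde\phi$ of zero spherical mean: the volume constraint \eqref{meanphi} together with \eqref{eq:Poinca} gives $|\bar\phi|\les\|\phi\|^2_{L^2(\partial B)}\les[\phi]^2_{H^{\alpha/2}(\partial B)}$, while the radial symmetry of $\mu_B$ and the $0$-homogeneity of $\tilde\phi$ give $\int\tilde\phi\,d\mu_B=0$ (by Fubini in polar coordinates), so that $\int\tilde\phi\,dg=\int\tilde\phi\,d(g-\mu_B)$.

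The hard part will be controlling this residual $\int\tilde\phi\,d(g-\mu_B)$. I would use duality in $H^{\pm\alpha/2}$: since $u_B\equiv\Ia(B)$ on $B$, the cross term $I_\alpha(g-\mu_B,\mu_B)=\int u_B\,d(g-\mu_B)$ vanishes, and \eqref{IH} identifies
\[
[g-\mu_B]^2_{H^{-\alpha/2}(\R^N)}\sim I_\alpha(g-\mu_B)=I_\alpha(g)-\Ia(B).
\]
A pairing with a suitably cut-off $0$-homogeneous extension of $\tilde\phi$ then yields $|\int\tilde\phi\,d(g-\mu_B)|\les [\phi]_{H^{\alpha/2}(\partial B)}\sqrt{I_\alpha(g)-\Ia(B)}$. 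To close the estimate I would bootstrap: writing $I_\alpha(g)=I_\alpha(T^{-1}\#\mu_E)$ and applying Lemma \ref{lem:TaylorT} to the map $T^{-1}$ (which has the same form as $T$ with $\phi$ replaced by $-\phi+O(\phi^2)$) together with the Euler--Lagrange condition $u_E\equiv\Ia(E)$ on $E$ produces
\[
I_\alpha(g)=\Ia(E)(1+\ha\bar\phi_g)+O\bigl([\phi]^2_{H^{\alpha/2}(\partial B)}+[\phi]^2_{H^{(2-\alpha)/2}(\partial B)}\bigr).
\]
Subtracting $\Ia(B)$ and using $\Ia(E)\le\Ia(B)$ together with the preliminary estimate on $\bar\phi_g$ lets one absorb the first-order term in $\bar\phi_g$ and conclude that $I_\alpha(g)-\Ia(B)\les [\phi]^2_{H^{\alpha/2}(\partial B)}+[\phi]^2_{H^{(2-\alpha)/2}(\partial B)}$. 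Plugging back gives the sought bound on $\bar\phi_g$ and hence \eqref{quantI}.

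The consequence \eqref{quantI2} is then immediate from the classical Fuglede-type quantitative isoperimetric inequality, which for nearly spherical sets satisfying the volume and barycenter constraints gives $P(E)-P(B)\ges[\phi]^2_{H^1(\partial B)}$. Since $\alpha\in(0,2)$ makes both $\alpha/2$ and $(2-\alpha)/2$ lie in $(0,1)$, the embedding chain \eqref{embed} yields $[\phi]^2_{H^{\alpha/2}(\partial B)}+[\phi]^2_{H^{(2-\alpha)/2}(\partial B)}\les [\phi]^2_{H^1(\partial B)}\les P(E)-P(B)$, so \eqref{quantI} implies \eqref{quantI2}. The principal technical difficulty throughout is the duality and bootstrap in the third paragraph: the $0$-homogeneous extension of $\tilde\phi$ has an ambiguous singularity at the origin that makes its $H^{\alpha/2}(\R^N)$ norm delicate to control (especially for $\alpha\ge 1$), so the pairing must be either localized via an appropriate cut-off or recast directly at the level of the angular marginal of $g-\mu_B$ on $\partial B$, where the sharp blow-up rate \eqref{hypg} of $\mu_E$ near $\partial E$ is crucial for keeping the various integrals finite.
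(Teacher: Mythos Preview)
Your approach is correct but takes a more circuitous route than the paper. The paper starts from the identity
\[
\Ia(B)-\Ia(E)=-I_\alpha(g-\mu_B)+\bigl(I_\alpha(g)-\Ia(E)\bigr),
\]
(using that $I_\alpha(g-\mu_B,\mu_B)=0$ since $u_B$ is constant on $B$), keeps the positive term $I_\alpha(g-\mu_B)$ on the left, and expands the bracket via \eqref{TaylorI} and the decomposition $I_\alpha(g,\phi g)=I_\alpha(\mu_B,\phi\mu_B)+I_\alpha(\mu_B,\phi(g-\mu_B))+I_\alpha(g-\mu_B,\phi g)$. The two cross terms are each bounded by $I_\alpha^{1/2}(g-\mu_B)[\phi]_{H^{\alpha/2}}$ (the second via the same cutoff pairing you describe), and a single application of Young's inequality absorbs them into the left-hand side. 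This yields \eqref{quantI} directly, without any bootstrap and without ever needing the reverse expansion for $T^{-1}$.

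Your competitor/bootstrap scheme reaches the same conclusion, but at the cost of an extra ingredient: the identity $I_\alpha(g)=\Ia(E)(1+\ha\bar\phi_g)+O([\phi]^2)$ requires the analogue of \eqref{TaylorI} for $T^{-1}$ on $E$ (integrated against $\mu_E\otimes\mu_E$ rather than $g\otimes g$). This is plausible --- the estimate \eqref{estimmu} plays the role of \eqref{hypg}, and the integral bounds like \eqref{reduction} should go through with the radial upper limit $1+\phi(\sigma)$ in place of $1$ --- but it is a nontrivial technical step that you would have to carry out in full. The paper sidesteps it entirely because the quantity $I_\alpha(g-\mu_B)=I_\alpha(g)-\Ia(B)$ never needs to be bounded on its own: it is absorbed rather than estimated. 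Your treatment of \eqref{quantI2} via Fuglede and \eqref{embed} is exactly what the paper does.
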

\begin{proof}
 Using the same notation as above and using that $I_\alpha(g)=I_\alpha(g-\mu_B)+2I_\alpha(g-\mu_B,\mu_B)+ I_\alpha(\mu_B)$, we have 
 \begin{align*}
  \I_\alpha(B)-\I_\alpha(E)&=I_\alpha(\mu_B)-\I_\alpha(E)\\
  &= I_\alpha(\mu_B)-I_\alpha(g) + I_\alpha(g)-\I_\alpha(E)\\
  &= -I_\alpha(g-\mu_B)-2 I_\alpha(g-\mu_B,\mu_B)+ I_\alpha(g)-\I_\alpha(E).
 \end{align*}
We now notice that by optimality of $\mu_B$ we have that $u_B$ is constant in $B$ (recall \eqref{eq:uconstantE}) and thus, since $\int_B \mu_B=\int_B g=1$,
\[
I_\alpha(g-\mu_B,\mu_B)=\int_B u_B (g-\mu_B)= u_B(0)\int_B (g-\mu_B)=0. 
\]
Using \eqref{TaylorI} we can compute  
\begin{align*}
 \I_\alpha(B)-\I_\alpha(E)+I_\alpha(g-\mu_B)&\le I_\alpha(g)-I_\alpha((1-\frac{\ha}{2}\phi)g) + C [\phi]_{H^{\frac{2-\alpha}{2}}(\partial B)}^2\\
 &=-\frac{\ha^2}{4}I_\alpha(\phi g)+\ha I_\alpha(g,\phi g) + C [\phi]_{H^{\frac{2-\alpha}{2}}(\partial B)}^2\\
 &\les I_\alpha(g,\phi g) +[\phi]_{H^{\frac{2-\alpha}{2}}(\partial B)}^2.
\end{align*}
We further decompose the term $I_\alpha(g,\phi g)$ as follows:
\begin{align*}
 I_\alpha(g,\phi g)&=I_\alpha(\mu_B,\phi g)+ I_\alpha(g-\mu_B,\phi g)\\
 &=I_\alpha(\mu_B, \phi \mu_B)+ I_\alpha(\mu_B, \phi(g-\mu_B))+ I_\alpha(g-\mu_B,\phi g).
\end{align*}
We now observe that since $\mu_B$ is radially symmetric and since $\phi$ is $0-$homogeneous,
\[
 I_\alpha(\mu_B, \phi \mu_B)=C \int_{\partial B} \phi\stackrel{\eqref{meanphi}}{\les}\int_{\partial B}  \phi^2.
\]
By \eqref{eq:Poinca}, we therefore have 
\begin{equation}\label{almostdone}
 \I_\alpha(B)-\I_\alpha(E)+I_\alpha(g-\mu_B)\les  I_\alpha(\mu_B, \phi(g-\mu_B))+   I_\alpha(g-\mu_B,\phi g)+[\phi]_{H^{\frac{2-\alpha}{2}}(\partial B)}^2.
\end{equation}

\noindent We first estimate $I_\alpha(g-\mu_B,\phi g)$. We notice that 
\[
 I_\alpha(\phi g)\le \lt(\int_{B\times B} \frac{\phi_x^2 g_x g_y}{|x-y|^{N-\alpha}}\rt)^{\frac{1}{2}}\lt(\int_{B\times B} \frac{\phi_y^2 g_x g_y}{|x-y|^{N-\alpha}}\rt)^{\frac{1}{2}}
 \stackrel{\eqref{Iphi2}}{\les} \int_{\partial B} \phi^2\stackrel{\eqref{eq:Poinca}}{\les}  [\phi]_{H^{\frac{\alpha}{2}}(\partial B)}^2.
\]
Thus, Cauchy-Schwarz  inequality for $I_\alpha$ (recall that it is a positive bilinear operator) gives 
\begin{equation}\label{almostdone1}
 I_\alpha(g-\mu_B,\phi g)\le I^{\frac{1}{2}}_\alpha(g-\mu_B) I^{\frac{1}{2}}_\alpha(\phi g)\les I^{\frac{1}{2}}_\alpha(g-\mu_B) [\phi]_{H^{\frac{\alpha}{2}}(\partial B)}.
\end{equation}

% 
% \[
% \int_{\partial B} \frac{\phi(y)}{|x-y|^{N-\alpha}}\,dy\lesssim \|\phi\|_{L^2(\partial B)}\| \nabla\phi\|_{L^2( \partial B)}
% \]
% where $C$ is independent from $x$, by multiplying by $g(x) $ and by integrating we  get  
% \[
% I_\alpha(\phi g)\lesssim  \int_{\partial B}|\nabla \phi|^2\lesssim  \int_{\partial B} \phi^2. 
% \]

% \noindent Hence, 
% 
%  Using Young inequality, we get 
% \begin{equation}\label{almostdone}
%   \I_\alpha(B)-\I_\alpha(E)\les -I_\alpha(g-\mu_B)+ I_\alpha(\mu_B, \phi(g-\mu_B))+ O\left(\int_{\partial B} |\nabla \phi|^2\right)
% \end{equation}
\noindent  We now turn to  $I_\alpha(\mu_B, \phi(g-\mu_B))$. For this we use that $u_B$ is constant on $B$ to write 
\[
 I_\alpha(\mu_B, \phi(g-\mu_B))=u_B(0)\int_B \phi (g-\mu_B).
\]
Let  $\rho$ be a smooth, positive cut-off function with $\rho=1$ on $B$ and $\rho=0$ on $B_2^c$. We then set $\Phi=\phi\rho$ so that 
\begin{align*}
 \int_B \phi (g-\mu_B)&=\int_{\R^N} \Phi (g-\mu_B)\\
 %&=\int_{\R^N} \widehat{\Phi} \widehat{(g-\mu_B)}\\
 %&\le \lt(\int_{\R^N} |\xi|^\alpha |\hat{\Phi}|^2\rt)^{\frac{1}{2}} \lt(\int_{\R^N} \frac{1}{|\xi|^{\alpha}} |\widehat{(g-\mu_B)}|^2\rt)^{\frac{1}{2}}\\
 &\le [\Phi]_{H^{\frac{\alpha}{2}}(\R^N)} [g-\mu_B]_{H^{-\frac{\alpha}{2}}(\R^N)}\\
 &\stackrel{\eqref{IH}}{\les}[\Phi]_{H^{\frac{\alpha}{2}}(\R^N)} I_\alpha^{\frac{1}{2}}(g-\mu_B).
\end{align*}
We finally show that  
\begin{equation}\label{boundary}
[\Phi]_{H^{\frac{\alpha}{2}}(\R^N)}^2\les [\phi]^2_{H^{\frac{\alpha}{2}}(\partial B)}+ \int_{\partial B} \phi^2.
\end{equation}
For every $x,y$,
\begin{multline*}
 (\Phi_x-\Phi_y)^2=(\phi_x \rho_x-\phi_y\rho_y)^2\les (\phi_x-\phi_y)^2 \rho_x^2+ \rho_y^2 (\rho_x-\rho_y)^2
 \les (\phi_x-\phi_y)^2+ \phi_y^2(x-y)^2,
\end{multline*}
so that 
\begin{multline*}
  [\Phi]_{H^{\frac{\alpha}{2}}(\R^N)}^2\stackrel{\eqref{Hsbis}}{\les} \int_{B_2\times B_2}\frac{(\Phi_x-\Phi_y)^2}{|x-y|^{N+\alpha}}\les
  \int_{B_2\times B_2}\frac{(\phi_x-\phi_y)^2}{|x-y|^{N+\alpha}}+  \int_{B_2\times B_2}\frac{\phi_y^2}{|x-y|^{N+\alpha-2}}\\
  \les \int_{B_2\times B_2}\frac{(\phi_x-\phi_y)^2}{|x-y|^{N+\alpha}}+\int_{\partial B} \phi^2.
\end{multline*}
Using polar coordinates we now write
\[
 \int_{B_2\times B_2}\frac{(\phi_x-\phi_y)^2}{|x-y|^{N+\alpha}}=\int_{\partial B\times \partial B} (\phi(\sigma)-\phi(v))^2\lt[\int_0^2\int_0^2 r^{N-1} s^{N-1} \frac{dr ds}{((r-s)^2+ rs |\sigma-v|^2)^{\frac{N+\alpha}{2}}}\rt] d\sigma dv.
\]
Arguing as for \eqref{eq:claimF} we have 
\[
 \int_0^2\int_0^2 r^{N-1} s^{N-1} \frac{dr ds}{((r-s)^2+ rs |\sigma-v|^2)^{\frac{N+\alpha}{2}}}\les \frac{1}{|\sigma-v|^{N-1+\alpha}},
\]
which concludes the proof of \eqref{boundary}. Recalling \eqref{eq:Poinca} we find 
\begin{equation}\label{almostdone2}
 I_\alpha(\mu_B, \phi(g-\mu_B))\les I_\alpha^{\frac{1}{2}}(g-\mu_B)[\phi]_{H^{\frac{\alpha}{2}}(\partial B)}.
\end{equation}
Plugging  \eqref{almostdone1} and \eqref{almostdone2} into \eqref{almostdone} we get 
\[
  \I_\alpha(B)-\I_\alpha(E)+I_\alpha(g-\mu_B)\les   I_\alpha^{\frac{1}{2}}(g-\mu_B)[\phi]_{H^{\frac{\alpha}{2}}(\partial B)}+[\phi]_{H^{\frac{2-\alpha}{2}}(\partial B)}^2.
\]
Using Young inequality we conclude the proof of \eqref{quantI}.\\

Since $E$ is nearly spherical we have\footnote{this is the only place where we use that the barycenter of $E$ is in $0$.} (see \cite{Fuglede})
\[
 \int_{\partial B} |\nabla \phi|^2\les P(E)-P(B)
\]
so that \eqref{quantI2} follows using \eqref{embed}.
\end{proof}
We can now conclude the proof of Theorem \ref{thm:nearlyspherical}.
\begin{proof}[Proof of Theorem \ref{thm:nearlyspherical}]
Let $E$ be a nearly spherical set with $\|\phi\|_{W^{1,\infty}(\partial B)}\ll1$. If $\FaQ(E)\le \FaQ(B)$, then rearranging terms we find
\[
 P(E)-P(B)\le Q^2\lt(\Ia(B)-\Ia(E)\rt)\stackrel{\eqref{quantI2}}{\les} Q^2 \lt(P(E)-P(B)\rt).
\]
This implies that either $P(E)=P(B)$ and thus $E=B$ by the isoperimetric inequality or $1\les Q^2$ which proves the claim. 
\end{proof}

\section{Non existence in dimension $2$}\label{sec:nonex}
We show here a nonexistence result in dimension  $ 2$. Namely, that in $N=2$  minimizers in $\mathcal S$ (and hence classical minimizers) cannot exist for large $Q$.
\begin{theorem}\label{thm:nonex}
 Let $N=2$ and $\alpha\in(0,1]$. Then, for $Q\gg1$ there are no  minimizers of 
 \begin{equation}\label{eq:prob}
  \min \lt\{ \FaQ(E) \ : \ |E|=\omega_N,\, E\in\mathcal S\rt\}.
 \end{equation}

\end{theorem}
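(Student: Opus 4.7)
The strategy is to show that for $Q$ sufficiently large, the infimum of $\FaQ$ in $\mathcal S$ is not attained, by exhibiting for any candidate minimizer $E\in\mathcal S$ with $|E|=\omega_N$ a strictly better competitor within $\mathcal S$.

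The first step reduces to the connected case. If $E\in\mathcal S$ has at least two disjoint connected components of positive measure, then translating them apart indefinitely strictly decreases $\Ia$ (the Riesz cross-terms are strictly positive at finite distance and vanish in the limit) while $P(E)$ and $|E|$ are preserved; hence $\FaQ$ strictly decreases and $E$ cannot be a minimizer. In particular any potential minimizer must be connected.

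The second step produces a competitor for any connected $E$. For each integer $k\ge1$, consider
\[
\widetilde F_R^k=\bigcup_{i=1}^k\bigl(k^{-1/N}E+Rv_i\bigr)\in\mathcal S,
\]
a disjoint union of $k$ scaled copies of $E$ placed at mutual distance of order $R$. Then $|\widetilde F_R^k|=\omega_N$, $P(\widetilde F_R^k)=k^{1/N}P(E)$, and the analysis of the Riesz energy under separation (in the spirit of Lemma~\ref{cutbis}) gives
\[
\lim_{R\to\infty}\FaQ(\widetilde F_R^k)= k^{1/N}P(E)+Q^2k^{-\alpha/N}\Ia(E)=:g_E(k).
\]
If $E$ were a minimizer, then $\FaQ(E)=g_E(1)\le g_E(k)$ for every integer $k\ge 1$, so $k=1$ would be the optimal integer for $g_E$. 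Since $g_E$ is strictly convex on $(0,\infty)$ with interior minimum at $k_*=(\alpha Q^2\Ia(E)/P(E))^{N/(1+\alpha)}$, this forces $k_*$ to lie close to $1$, i.e.\ $P(E)\gtrsim Q^2\Ia(E)$; otherwise the integer $\lfloor k_*\rfloor$ yields $g_E(\lfloor k_*\rfloor)<g_E(1)$, a contradiction. Applying the same construction with $B_1$ in place of $E$ and optimizing over $k$ gives $\inf_{\mathcal S}\FaQ\lesssim Q^{2/(1+\alpha)}$, whence $\FaQ(E)\lesssim Q^{2/(1+\alpha)}$ and in particular $P(E)\lesssim Q^{2/(1+\alpha)}$.

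The specifically two-dimensional input is the diameter bound $\mathrm{diam}(E)\le P(E)/2$ valid for any connected compact $E\subset\R^2$, which combined with the monotonicity $\Ia(E)\ge \Ia(B_{\mathrm{diam}(E)})$ yields $\Ia(E)\gtrsim P(E)^{-(2-\alpha)}$. Together with $P(E)\gtrsim Q^2\Ia(E)$ and the energy bound $\FaQ(E)\lesssim Q^{2/(1+\alpha)}$, this forces $P(E)$ and $\Ia(E)$ into a narrow scaling window that, for $Q$ larger than a threshold $Q_4(N,\alpha)$, can be ruled out by choosing a sufficiently large integer $k$ in the $k$-copy competitor so that $g_E(k)<g_E(1)$ strictly. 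The main obstacle is to make this strict improvement quantitative and uniform in $E$: the strict convexity of $g_E$ must be combined with the interval localization of $k_*$ derived from the bounds above in order to guarantee that the optimal integer $k$ indeed beats $k=1$ by a definite amount, which is precisely the step where $N=2$ and $\alpha\in(0,1]$ enter through the sharp interplay between the exponents $1/N=1/2$ and $\alpha/N=\alpha/2$.
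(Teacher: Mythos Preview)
Your proposal has a genuine gap in the final paragraph. The constraints you derive from the $k$-copy competitor and the two-dimensional diameter bound---namely $P(E)\gtrsim Q^2\Ia(E)$, $\Ia(E)\gtrsim P(E)^{-(2-\alpha)}$, and $\FaQ(E)\lesssim Q^{2/(1+\alpha)}$---are mutually \emph{compatible} for every large $Q$ when $\alpha<1$. Indeed, combining the first two gives only $P(E)\gtrsim Q^{2/(3-\alpha)}$, and since $3-\alpha>1+\alpha$ the resulting window
\[
Q^{2/(3-\alpha)}\lesssim P(E)\lesssim Q^{2/(1+\alpha)}
\]
is nonempty and actually widens as $Q\to\infty$. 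For $\alpha=1$ the two exponents coincide and you would need to track precise constants, which you have not done. In neither case does ``choosing a sufficiently large integer $k$'' help: the inequality $g_E(1)\le g_E(k)$ for all integers $k\ge1$ has already been fully exploited in deducing $k_*\lesssim 1$, and the convexity of $g_E$ yields no further information once $k_*<2$.

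The paper uses a qualitatively stronger competitor. Instead of scaling and duplicating the whole of $E$, one cuts $E$ along every hyperplane $H_{\nu,t}=\{x\cdot\nu=t\}$ into two pieces $E^\pm_{\nu,t}$, endowed with the corresponding halves $\mu^\pm_{\nu,t}$ of the optimal measure, and sends them to infinity; minimality gives $Q^2 I_\alpha(\mu^+_{\nu,t},\mu^-_{\nu,t})\le \H^1(E\cap H_{\nu,t})$. Integrating over all $(\nu,t)$ and using $\int\chi_{H^+_{\nu,t}\times H^-_{\nu,t}}(x,y)\,dt\,d\nu\sim|x-y|$ yields
\[
Q^2\int_{\R^2\times\R^2}\frac{d\mu(x)\,d\mu(y)}{|x-y|^{1-\alpha}}\lesssim |E|,
\]
which forces ${\rm diam}(E)\gtrsim Q^{2/(1-\alpha)}$ and hence, via your own input $P(E)\gtrsim{\rm diam}(E)$ for connected planar sets, the lower bound $\FaQ(E)\gtrsim Q^{2/(1-\alpha)}$. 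This contradicts the upper bound $Q^{2/(1+\alpha)}$ for $Q\gg1$. The key idea you are missing is that slicing and integrating over all hyperplanes effectively gains one power of $|x-y|$ in the Riesz kernel, upgrading the perimeter lower bound from $Q^{2/(3-\alpha)}$ to the decisive $Q^{2/(1-\alpha)}$; competitors built only from rescaled copies of $E$ or $B$ cannot detect this.
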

\begin{proof}
Let us point out that although the case $\alpha=1$ is already covered by \cite{murnovruf} (with an explicit threshold between existence and non-existence) we will still include it in the proof. 
We follow the ideas of \cite[Theorem 3.3]{KnuMu}  in the streamlined version of \cite{FKM}. 
For $\nu\in \partial B_1$ and $t\in \R$, we let 
\[
 H_{\nu,t}^+=\{ x\cdot\nu \ge t\},  \qquad H_{\nu,t}^-=\{ x\cdot\nu < t\} \qquad \textrm{and } \qquad H_{\nu,t}=\{ x\cdot\nu = t\}.  
\]
We then define for any measure $\mu$ and set $E$,
\[
 \mu_{\nu,t}^{\pm}= \mu|_{H_{\nu,t}^\pm}  \qquad \textrm{and}   \qquad   E_{\nu,t}^{\pm}= E\cap H_{\nu,t}^\pm.
\]
Assume that $E$ is a minimizer of \eqref{eq:prob}. Comparing the energy of $E$ with the one of two infinitely far apart copies of $E^\pm_{\nu,t}$ with measures $\mu^{\pm}_{\nu,t}$, we have 
\[
 \FaQ(E)\le P(E^+_{\nu,t}) +P(E^-_{\nu,t})+ Q^2 I_\alpha(\mu^+_{\nu,t})+Q^2I_\alpha(\mu^{-}_{\nu,t}).
\]
Using that $P(E)=P(E^+_{\nu,t}) +P(E^-_{\nu,t})-2 \H^1(E\cap H_{\nu, t})$ and $I_\alpha(E)=I_\alpha(\mu^+_{\nu,t})+I_\alpha(\mu^{-}_{\nu,t})+2I_\alpha(\mu^+_{\nu,t},\mu^-_{\nu,t})$, this simplifies to 
\[
 Q^2 I_\alpha(\mu^+_{\nu,t},\mu^-_{\nu,t})\le \H^1(E\cap H_{\nu, t}).
\]
We now integrate this inequality in $t$ and $\nu$ to get
\begin{align*}
|E|&\ges \int_{\partial B_1}\int_\R \H^1(E\cap H_{\nu, t})\,d\nu \\
 &\ge  Q^2\int_{\partial B_1}\int_\R \Ia(\mu^+_{\nu,t},\mu^-_{\nu,t})\,d\nu\\
 &=Q^2\int_{\partial B_1}\int_\R \int_{H^+_{\nu,t}\times H^-_{\nu,t} } \frac{d\mu(x)\, d\mu(y)}{|x-y|^{2-\alpha}}\,d\nu \\
 &\ges Q^2 \int_{\R^2\times\R^2} \frac{d\mu(x) \,d\mu(y)}{|x-y|^{1-\alpha}},
\end{align*}
where we used that for every $(x,y)$,
\[
 \int_{\partial B_1}\int_\R \chi_{H^+_{\nu,t}\times H^-_{\nu,t}}(x,y) dt\, d\nu\sim |x-y|.
\]
Since $|E|=\omega_N$, this yields the estimate 
\[
 1\ges \frac{Q^2}{d^{1-\alpha}} 
\]
where $d={\rm diam}(E)$. If $\alpha=1$ this already gives the conclusion so that we are left with the case $\alpha<1$. Since for $N=2$, $P(E)\ges d$, we get the lower bound 
\[
 \FaQ(E)\ges  Q^{\frac{2}{1-\alpha}}.
\]
 For  a generalized set $\wE_r$ made of $n$ copies of the ball of radius $r= n^{-1/2}$, we have
 \[
  \FaQ(\wE_r)\les n r +\frac{Q^2}{n r^{2-\alpha}}=r^{-1}+Q^2 r^\alpha. 
 \]
Optimizing in $r$ by choosing $r=Q^{-\frac{2}{1+\alpha}}$, we find by minimality of $E$,
\[
 Q^{\frac{2}{1-\alpha}}\les \FaQ(E)\le \FaQ(\wE_r)\les Q^{\frac{2}{1+\alpha}}, 
\]
which is absurd if $Q\gg1$.
\end{proof}
\begin{remark}
 While we believe that the same result holds for  $N\ge 3$, it is well-known that this kind of arguments gives useful information only when $\alpha>N-2$, 
 which is compatible with $\alpha\le 1$ only if $N=2$.
\end{remark}

\noindent
{\bf Acknowledgments.}
We thank A. Zilio for useful discussions about the Alt-Caffarelli-Friedman monotonicity formula. M. Goldman and B. Ruffini  were partially supported by the project ANR-18-CE40-0013 SHAPO financed by the French Agence Nationale de la Recherche (ANR), and M. Novaga was partially supported by the PRIN Project 2019/24.
B. Ruffini and M. Novaga are members of the INDAM-GNAMPA.

%%%%%%%%%%%%%%%%%%%%%%%%%%%%%%%%%%%%%%%%%%%%%%%%%%

\bibliographystyle{acm}

\end{document}